\documentclass[12pt]{amsart}
\usepackage{amsfonts, epic, amssymb,  
bm, mathptmx, amsmath, array, multirow,url}


\newcommand{\calB}{\ensuremath{\mathcal{B}} }
\newcommand{\calC}{\ensuremath{\mathcal{C}} }


\newcommand{\bfs}{\ensuremath{\bm s} }


\newcommand{\frakf}{\ensuremath{\mathfrak{f}} }

\newcommand{\frakh}{\ensuremath{\mathfrak{h}} }
\newcommand{\fraki}{\ensuremath{\mathfrak{i}} }

\newcommand{\frakm}{\ensuremath{\mathfrak{m}} }
\newcommand{\frakn}{\ensuremath{\mathfrak{n}} }

\newcommand{\frakv}{\ensuremath{\mathfrak{v}} }

\newcommand{\frakz}{\ensuremath{\mathfrak{z}} }

\newcommand{\frakso}{\ensuremath{\mathfrak{so}}          }


\newcommand{\boldF}{\ensuremath{\mathbb F}}

\newcommand{\boldR}{\ensuremath{\mathbb R}}
\newcommand{\boldZ}{\ensuremath{\mathbb Z}}

\newcommand{\ad}{\operatorname{ad}}

\newcommand{\Cay}{\operatorname{Cay}}

\newcommand{\End}{\operatorname{End}}

\newcommand{\myspan}{\operatorname{span}}

\newcommand{\trace}{\operatorname{trace}}

\newcommand{\smallfrac}[2]{\textstyle{\frac{#1}{#2}}  }

\newcommand{\la}{\langle}
\newcommand{\ra}{\rangle}


\theoremstyle{plain}

\newtheorem{theorem}{Theorem}[section] 
\newtheorem{prop}[theorem]{Proposition}
\newtheorem{lemma}[theorem]{Lemma}  
\newtheorem{cor}[theorem]{Corollary}

\theoremstyle{definition} 
\newtheorem{defn}[theorem]{Definition}
\newtheorem{example}[theorem]{Example}

\theoremstyle{remark}
\newtheorem{remark}[theorem]{Remark}
\newtheorem{case}[]{Case}

\newtheorem{problem}[theorem]{Problem}

\begin{document}

\title{Uniform Lie Algebras and Uniformly Colored Graphs} 
\subjclass[2000]{Primary: 17B30; Secondary: 05C15, 53C07}

\author[T. L. Payne]{Tracy L. Payne}
\address{Department of Mathematics,  Idaho State University, 
Pocatello, ID 83209-8085} \email{payntrac@isu.edu}

\author[M. Schroeder]{Matthew Schroeder}
\address{Department of Mathematics,  Idaho State University, 
Pocatello, ID 83209-8085} \email{schrmat3@isu.edu}

\subjclass{17B30, 05C15, 53C07}
\keywords{nilpotent Lie algebra, nilmanifold, graph coloring, graph
  decomposition, $H$-decomposition, 
one-factorization, Einstein metric,
  nilsoliton metric}

\date{\today}

\thanks{The authors thank the referee for his or her careful
  reading and useful remarks.  
Part of the work in this paper was toward the second author's Master's
thesis \cite{schroeder-thesis} at Idaho State University;  the authors
are grateful to the members of his thesis committee, Shannon
Kobs-Nawotniak, Cathy Kriloff and Yu Chen, for their comments and
suggestions.   The first author thanks Dietrich Burde for 
interesting  discussions.}


\begin{abstract} 
Uniform Lie algebras are combinatorially defined two-step nilpotent Lie
algebras which can be used to define Einstein solvmanifolds. 
These Einstein spaces often have nontrivial isotropy groups.  We
derive basic properties of uniform Lie algebras and we classify
uniform Lie algebras with five or fewer generators.  We define a
type of directed colored graph called a uniformly colored graph and
 establish a correspondence between uniform Lie algebras
and uniformly colored graphs.  We present several methods of
constructing infinite families of uniformly colored graphs and
corresponding uniform Lie algebras.
\end{abstract}

\maketitle

\section{Introduction}
\label{intro}

A Riemannian manifold is Einstein if its Ricci form is a scalar
multiple of its metric.  One approach to the study of Einstein
manifolds is to focus on classes of Einstein manifolds with symmetry,
such as homogeneous spaces and spaces of cohomogeneity one (see
\cite{wang-symmetry-1, wang-symmetry-2}).  We are interested in a
special class of Einstein homogeneous spaces: Einstein solvmanifolds.
A solvmanifold is a simply connected solvable group endowed with a
left-invariant Riemannian metric.   

 In \cite{eberlein-heber}, a representation of a compact Lie group is
used to define a two-step metric nilpotent Lie algebra, which can then be used
to define an Einstein solvmanifold. In \cite{eberlein-heber,
eberlein-representation, leukert-98}, and \cite{lauret-99}, the
geometry of these solvmanifolds was studied.  Such nilpotent Lie
algebras have large automorphism groups and the affiliated Einstein
solvmanifolds have isotropy groups with positive dimension.  In his
earlier study of Einstein solvmanifolds \cite{deloff79}, DeLoff
defined a class of nilpotent Lie algebras called {\em uniform Lie
algebras} enjoying combinatorial symmetry instead of algebraic
symmetry (see Definition \ref{def of uniform}). Every uniform Lie
algebra can be used to define an Einstein solvmanifold, and due to the
combinatorial regularity in the definition of uniform Lie algebra, the
Einstein solvmanifolds often have nontrivial isotropy groups which may
now be finite or infinite.  The category of uniform Lie algebras includes
many of the Lie algebras defined by representations of compact Lie
groups, and well-known classes of two-step nilpotent Lie algebras,
such as Heisenberg Lie algebras, two-step free nilpotent Lie algebras,
Iwasawa type nilpotent Lie algebras for rank one symmetric spaces of
noncompact type, and Lie algebras of Heisenberg type.

 In this work, we study uniform Lie algebras.  For the sake of
economy, we do not explicitly discuss the Einstein solvmanifolds
that they determine.  Uniform Lie algebras may be defined over any
field, and may be of interest over general fields from a purely
algebraic perspective, but throughout this work, because of our
geometric motivation, we assume that the field of definition is
$\boldR.$

Let $(V,E)$ be a simple digraph with edge coloring $c: E \to S.$  
Let $V = \{v_i\}_{i=1}^q$ and $S = \{ z_k\}_{k=1}^p,$ and let
$\frakv$ and $\frakz$ be the vector spaces of $\boldR$-linear
combinations of $V$ and $S,$ respectively.  
Setting $[v_i,v_j] = \sum_{k=1}^p \alpha_{ij}^k z_k,$
where
\[  \alpha_{ij}^k = 
\begin{cases}  
1 &  \text{$e_{ij} = (v_i,v_j) \in E$ and $c(e_{ij}) = z_k$ }\\
-1 & \text{$e_{ji} = (v_j,v_i) \in E$ and $c(e_{ji}) = z_k$ }\\
0 & \text{otherwise}\\
\end{cases} ,\]
defines a    two-step nilpotent
Lie algebra structure on $\frakv \oplus \frakz.$

We define a class of edge colored digraphs, called uniformly colored
graphs, and show  that any uniformly colored graph defines a uniform Lie
algebra, and any uniform Lie algebra may be encoded as a uniformly
colored graph.  Uniform  colorings of graphs are (up to a choice
of orientation) equivalent to $H$-decompositions of regular graphs
with $H =K_2 + \cdots + K_2;$ that is, decompositions of regular
graphs into subgraphs all isomorphic to the same disjoint sum $K_2 +
\cdots + K_2,$ where $K_2$ is the complete graph on two vertices.

Algebraic objects are often used to analyze combinatorial and 
topological problems, 
such as in the case of Orlik-Solomon algebras or free
partially commutative monoids, and
conversely, graphs or simplices have been used to define algebraic
objects, as with rooted tree algebras and in Stanley-Reisner theory.
Closer to the topic at hand, Dani and Mainkar defined a class of two-step
nilpotent Lie algebras associated to graphs that they called nilpotent
Lie algebras of graph type (\cite{dani-mainkar-05}), and Mainkar
showed that two nilpotent Lie algebras of graph type are isomorphic if
and only if the graphs they arise from are equivalent
(\cite{mainkar-15}).

  Einstein solvmanifolds defined by
solvable extensions of nilpotent Lie algebras of graph type have been
studied in \cite{lauretwill07, lafuente-13, oscari-14}, and
 the geometry of metric nilpotent Lie algebras defined by
Schreier graphs was addressed
 in \cite{ray-16}.  (Although both of these classes of
nilpotent Lie algebras, graph type and Schreier type, overlap with the
class of uniform Lie algebras studied here, there are no containment
relations between the class of uniform Lie algebras and either of
these classes. See Remark \ref{Dani-Mainkar}.)
  Pseudo H-type algebras were analyzed using
combinatorial and orthogonal designs in \cite{furutani-et-al}.  In
\cite{payne-agag}, it was shown that if $rp-q-p+1>0,$ an
Einstein solvmanifold defined by a  uniform metric
Lie algebra of type $(p,q,r)$ admits nonisometric Einstein
deformations.

We derive basic properties of uniform Lie algebras and uniformly
colored graphs.  We show how algebraic properties of a uniform Lie
algebra translate to graph theoretic properties of the corresponding
uniformly colored graph.  We show in Proposition \ref{totally
geodesic} how totally geodesic subalgebras of a uniform Lie algebra
may be found using the corresponding uniform graph.
  In Proposition \ref{union equals}, we show that graph
unions correspond to concatenations of Lie algebras, and in
Propositions \ref{graph union disjoint color set} and \ref{graph union
same color set} we determine when graph unions of uniform graphs are
again uniform.

We give many examples of uniform Lie algebras and uniformly colored
graphs, some in infinite families.  We show uniform Lie algebras can
be found from well-known combinatorially defined graphs, such as
Kneser graphs, and decompositions of familiar regular graphs, such as
 one-factorizations and near-one-factorizations of
complete graphs. We present a general method of constructing
uniform Lie algebras from Cayley graphs.  These examples indirectly
give a wealth of  new examples of Einstein solvmanifolds. 

 Many of the
examples we give come from graphs with symmetries, and the
corresponding Einstein solvmanifolds inherit those symmetries.  We
show in Proposition \ref{symmetry} that the symmetry group of a
uniformly colored digraph embeds in the automorphism group of the
corresponding uniform Lie algebra.  Thus, the automorphism group of
a uniform Lie algebra may contain a nontrivial finite subgroup, and
the corresponding Einstein solvmanifold  will have that finite group as
a subgroup of its isotropy group.
In addition, the corresponding simply connected Lie groups may have
infranilmanifold quotients.  (For all uniform Lie algebras, the
corresponding simply connected nilpotent Lie group admits a
lattice).

In Theorem \ref{classification thm}  we classify all uniform Lie
algebras with five or fewer generators; a complete list appears in
Table \ref{list of algebras}.   There are only 12 Lie
algebras in the list, demonstrating that uniform Lie
algebras are not so common in low dimensions.   However,
there are many in higher dimensions (See Remark \ref{lots}).

In addition to the examples that we present here,
 many more uniform edge colorings can
be found on well-known graphs and families of graphs.  We
leave it to the interested reader to
construct uniform edge colorings on the Heawood graph and
the Desargues graph; one-skeletons of symmetric polyhedra;
some (but not all) circulant graphs, some (but not all)
bipartite graphs, and graphs arising from incidence
geometries.  All the resulting uniformly colored graphs
define Einstein solvmanifolds, with the symmetry groups of
the colored graphs embedding into the isometry groups of
the manifolds.

We pose two problems.  The same strategy we used for the proof of
Theorem \ref{classification thm} can be used to classify uniform Lie
algebras of type $(p,q,r)$ with $q \ge 6.$ To assist in this goal, and
of independent interest, one could analyze infinite classes of uniform
Lie algebras of type $(p,q,r)$ which exist for all possible $q \ge 6,$
or in the cubic case, all even $q \ge 6.$
\begin{problem}  What are the structure and algebraic properties of
uniform Lie algebras that have an underlying graph which is a cycle?  a bipartite graph?  a complete graph?  a cubic graph?
\end{problem}

This involves a simply stated problem in graph theory.
\begin{problem}  What are the structure and properties of $K_2 +
  \cdots + K_2$-decompositions of regular graphs?
\end{problem}

The rest of the paper is organized as follows.  In Section
\ref{definition-examples}, we give the formal definition of
a uniform (metric) Lie algebra and present some
illuminating examples. In Section \ref{properties}, we
derive basic properties of uniform Lie algebras.  Then, in
Section \ref{graphs}, we define uniformly colored graphs,
give the correspondences between uniform Lie algebras and
uniformly colored graphs, and translate between algebraic
properties of Lie algebras and properties of uniformly
colored graphs.  In Section \ref{examples} we present a
variety of constructions of uniformly colored graphs. 
  Finally in Section \ref{classification}
we classify uniform type Lie algebras with five or fewer
generators.

\section{Definition and some examples}
\label{definition-examples}

The first thing we do is define Lie algebras of uniform
type and give some examples.   The definition may seem complicated at first,
but once a connection is made with Definition \ref{def of graph coloring},
 it will seem quite simple. 
\begin{defn}\label{def of uniform}
Let $p, q,$ and $r$ be positive integers.  
 A real  nilpotent Lie algebra $\frakn$ is said to
be of {\em uniform of type $(p,q,r)$} if there exists a 
basis $\calB = \{ v_i \}_{i=1}^q \cup \{ z_j\}_{j = 1}^p$ of $\frakn$
and  a positive integer $s,$ called the {\em degree}, such that the
following properties hold.
\begin{enumerate}
\item{For all $i, j =1, \ldots , q$ and all $l, m = 1, \ldots , p,$
\begin{enumerate}
\item{$[v_i, z_j] = 0$ and $[z_l, z_m] = 0,$}
\item{$[v_i,v_j] \in \{ 0, \pm z_1, \ldots, \pm z_p\}.$}
\end{enumerate} }\label{onek}
\item{If $[v_i,v_j]$ is nonzero and $[v_i,v_j]=\pm[v_i,v_k],$ then
$v_j=v_k.$}\label{rdistinct}
\item{For all $l =1, \ldots , p,$ there exist exactly $r$ disjoint
pairs $\{v_i,v_j\}$ with $[v_i,v_j]=z_l.$}\label{r}
\item{For all $j =1, \ldots , q,$ there exist exactly $s$ basis
vectors $v_i$ with $[v_i,v_j] \ne 0.$}\label{s}
\end{enumerate} 
The basis $\calB$ is called a {\em uniform basis} for
$\frakn.$ When $\boldF = \boldR,$ it is natural to
endow $\frakn$ with the inner product $Q$ which makes
$\calB$ orthonormal; then we call $(\frakn, Q)$ a {\em uniform
metric Lie algebra}.
\end{defn}
Note that the parameters $p , q, r$ and $s$ in the definition 
 are dependent: the number of nonzero structure constants 
relative to the basis $\calB$ is $sq=2rp.$    The uniform basis 
has rational structure constants, so by Mal'cev's Criterion, 
the simply connected  Lie groups corresponding to 
uniform Lie algebras  always admit compact quotients.  

The most simple examples of uniform Lie algebras are Heisenberg
algebras and free nilpotent algebras. 
\begin{example}\label{Heisenberg}
Let $\frakh_{2n+1}$ be the Heisenberg Lie algebra with
basis $\calB = (\{ x_i \}_{i=1}^{n} \cup \{ y_i \}_{i=1}^{n} )\cup
\{ z \} $ and Lie bracket defined by $[x_{i}, y_i ] = z$ for $i=1,
\ldots, n.$  With respect to the basis $\calB,$ $\frakh_{2n+1}$  is
a uniform Lie algebra of type$(p,q,r) = (1,2n, n)$ with degree $s = 1.$
\end{example}
Note that the uniform basis for $\frakh_{2n+1}$  is not unique; in
fact, there are uncountably many different uniform bases.  

\begin{example}\label{free}
Let $\frakf_{n,2}$ be the free two-step nilpotent Lie algebra on $n$
generators with basis $\{ x_i \}_{i=1}^{n} \cup \{ x_i \wedge x_j
\}_{1 \le i < j \le n}$ and Lie bracket defined by $[x_{i}, x_j ] =
x_i \wedge x_j$ for $1 \le i < j \le n.$ With respect to this
basis, $\frakf_{n,2}$ is a uniform Lie algebra.  The values of the
associated parameters are $(p,q,r) = ( (\begin{smallmatrix} n \\
2 \end{smallmatrix}),n, 1)$ and $s = n-1.$
\end{example}

Algebras of Heisenberg type are two-step metric nilpotent Lie
algebras which have been studied extensively by geometers
(\cite{btv}).  Crandall and Dodziuk have shown that all algebras of
Heisenberg type are uniform (\cite{crandall-dodziuk}).  Here is an
example of a metric Lie algebra of Heisenberg type.
\begin{example}\label{Damek Ricci}
Let $(\frakn, Q)$ be the nilpotent metric 
Lie algebra with orthonormal basis
$\calB = \{ v_1, v_2, v_3, v_4  \} \cup \{ z_1, z_2\}$ and Lie bracket
\begin{equation}\label{DR} 
 [v_1, v_2]= [v_3,v_4] = z_1, [v_2, v_3]=  [v_1, v_4] = z_2.\end{equation}
This Lie algebra is uniform of type $(2,4,2).$
\end{example}
We may generalize Example \ref{Damek Ricci}
to higher dimensions, although   
the examples in  dimension $2r+2 > 6$
are no longer  Heisenberg type.   
\begin{example}\label{Damek Ricci gen}
  For $r \ge 2,$ define the Lie algebra $\frakn(2r+2)$ of dimension
$2r+2$ to have basis $\calB = \{v_i\}_{i=1}^{2r} \cup \{z_1, z_2\}$
and Lie bracket determined by
\begin{align*}
 [v_{2i-1},v_{2i}] &= z_1 \qquad \text{for $i=1, 2, \ldots, r,$} \\ 
[v_{2i},v_{2i+1}] &= z_2 \qquad \text{for $i=1, 2, \ldots, r-1,$}
  \\ 
[v_1, v_{2r}] &= z_2.
 \end{align*} 
With respect to the basis $\calB,$ $\frakn(2r+2)$ is a uniform Lie
algebra of type $(2,2r,r)$ with $s=2.$

We may also change one sign in the defining bracket relations for
$\frakn(2r+2)$ to define the  uniform Lie algebra $\frakn'(2r+2)$ with
uniform basis
$\calB = \{v_i\}_{i=1}^{2r} \cup \{z_1, z_2\}$ and Lie bracket
\begin{align*}
 [v_{2i-1},v_{2i}] &= z_1 \qquad \text{for $i=1, 2, \ldots, r,$} \\ 
[v_{2i},v_{2i+1}] &= z_2 \qquad \text{for $i=1, 2, \ldots, r-1,$}
  \\ 
[v_{2r}, v_{1}] &=   z_2.
 \end{align*} 
\end{example}
The Heisenberg type Lie algebra in the next example is isomorphic
to the Iwasawa type nilpotent Lie algebra in the Iwasawa decomposition
of the Lie algebra of the isometry group of quaternionic hyperbolic space of dimension
8.
\begin{example}\label{quaternionic}
Let $(\frakn, Q)$ be the metric nilpotent Lie algebra with orthonormal basis
$\calB = \{ v_i \}_{i=1}^4 \cup \{ z_j\}_{j=1}^3$ and Lie bracket
\begin{equation}
 [v_1, v_2]= [v_3, v_4] = z_1, [v_1, v_3]=- [v_2, v_4] = z_2,
 [v_1,v_4]=  [v_2, v_3] = z_3.\end{equation}
With respect to $\calB,$ $\frakn$
 is a uniform Lie algebra of type $(3,4,2).$
\end{example}
As already seen in Example \ref{Damek Ricci gen}, it is possible that there are
two or more uniform Lie algebras which have the same set of indices of
nonzero structure constants relative to a uniform basis; these may or
may not be isomorphic.  In the next example we present a uniform Lie
algebra which, with respect to the uniform basis, has the same indices
for nonzero structure constants as the Lie algebra in Example
\ref{quaternionic}.
\begin{example}\label{quaternion associate}
Let $(\frakn, Q)$ be the metric nilpotent Lie algebra with orthonormal basis
$\calB = \{ v_i \}_{i=1}^4 \cup \{ z_j\}_{j=1}^3$ and Lie bracket
\begin{equation}\label{4-3}  
 [v_1, v_2]= [v_3, v_4] = z_1, [v_1, v_3]= [v_2, v_4] = z_2, [v_1,
v_4]= -[v_2, v_3] = z_3.\end{equation}
This metric Lie  algebra is not of Heisenberg type; therefore $\frakn$
is not isomorphic to the Lie algebra in Example \ref{quaternionic}.

This presentation of $\frakn$ is not the most efficient one, in the sense
that more nontrivial brackets of basis vectors appear than is
necessary.  With respect to a different basis 
$\calC = \{u_i \}_{i=1}^4 \cup \{ y_j\}_{j=1}^3,$ the Lie algebra is
given by 
\begin{equation}\label{4-3b} 
 [u_1, u_2]= y_1, [u_1, u_3]= [u_2, u_4] =  y_2,
 [u_3, u_4] = y_3.\end{equation}
The bases $\calB$ and $\calC$ are related by 
\begin{gather*}
 v_1 = u_1 + u_3, v_2 = u_2 + u_4,  v_3 = u_3 - u_1,
 v_4 = u_4 - u_2 \\  
z_1 = y_1  + y_3, z_2 = 2y_2, z_3 = -y_1 + y_3. 
\end{gather*}
Note that the basis  $\calC$ is not orthogonal with respect to $Q.$ 
\end{example}   

 The following  family $\frakm(q)$  of nilpotent Lie algebras arose in the study
of Anosov Lie algebras (Example 4.4,
\cite{payne-anosov}). Because a cyclic group acts
transitively on it, it is
among a class of nilpotent Lie algebras called {\em cyclic Lie
algebras}.
\begin{example}\label{cyclic algebra}
Let $\frakm(q)$ 
be the nilpotent Lie algebra with basis
$\calB = \{ v_i \}_{i=1}^q \cup \{ z_j\}_{j=1}^q$ and Lie bracket
\begin{equation}\label{anosov}
 [v_1, v_2]= z_1, [v_2, v_3]= z_2, \ldots, [v_q, v_1]= z_q.\end{equation} 
With respect to the given basis, the  Lie algebra $\frakm(q)$  is uniform of
type $(q,q,1).$
\end{example}

\section{Properties of uniform Lie algebras}\label{properties}

In this section we derive some basic properties of uniform Lie
algebras.  First we review some standard definitions.  Let
$(\frakn, Q)$ be a two-step nilpotent metric Lie algebra with
center $\frakz,$ and let $\frakv$ be the orthogonal complement to
the center.  For all $z \in \frakz,$ the map $J_{z}$ in $\End
(\frakv)$ is defined by $J_{z} v = \ad_v^\ast z$ for all $v \in
\frakv,$ where $\ad_v^\ast$ is the metric dual of the linear map
$\ad_v.$

If $\frakn$ is uniform with respect to $\calB= \{v_i\}_{i=1}^q \cup
\{z_j \}_{j=1}^p,$ we can use the identity $Q( J_{z_l} v_i, v_j)
= Q([v_i, v_j], z_l) $ to show that
\[  J_{z_l} (v_i) = 
\sum_{j=1}^q \epsilon_{ji}^l v_j, \quad
\text{where} \quad \varepsilon^l_{ji} = \begin{cases} 1 & \text{if
$[v_i, v_j] = z_l$} \\ -1 & \text{if $[v_i, v_j] = -z_l$} \\ 0 &
\text{otherwise} \\
 \end{cases}.  \] 
Since $\frakn$ is uniform with respect to $\calB,$
for fixed $i$ and $l,$ there is at most one $j$ so
that $[v_i, v_j] = \pm z_l.$  Thus we obtain simple formula for
$J_{z_l},$ when $z_l$ is in a uniform basis for a uniform Lie
algebra.  
\begin{prop}\label{JZ} 
 Let $(\frakn, Q)$ be a uniform metric Lie algebra with
uniform basis $\calB = \{ v_i\}_{i=1}^q \cup \{ z_j\}_{j=1}^p.$  Then 
\[ J_{z_l} (v_i) = \begin{cases} 
v_j & \text{if there exists $v_j$ so that $[v_i, v_j] = z_l$} \\ 
-v_j & \text{ if there exists $v_j$ so that  $[v_i, v_j] =- z_l$} \\ 
0 & \text{otherwise} \\ 
 \end{cases}.   \] 
\end{prop}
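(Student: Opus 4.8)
The plan is to unwind the definition of $J_{z_l}$ directly from the formula for the structure constants, using the defining properties of a uniform basis. Recall that by construction $J_{z_l}$ is the metric-adjoint operator characterized by $Q(J_{z_l} v_i, v_j) = Q([v_i,v_j], z_l)$ for all $i,j$, and the excerpt has already carried out this step to obtain $J_{z_l}(v_i) = \sum_{j=1}^q \varepsilon_{ji}^l v_j$ with $\varepsilon_{ji}^l \in \{0, \pm 1\}$ reading off whether $[v_i, v_j] = z_l$, $-z_l$, or neither. So the only thing left is to show that for each fixed $i$ and $l$, at most one index $j$ has $\varepsilon_{ji}^l \neq 0$; the asserted piecewise formula is then just a rewriting of the sum with a single (or no) surviving term.

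First I would fix $i$ and $l$ and suppose that $j$ and $k$ are two indices with $[v_i, v_j] = \pm z_l$ and $[v_i, v_k] = \pm z_l$. Then $[v_i, v_j]$ is nonzero (it equals $\pm z_l \neq 0$) and $[v_i, v_j] = \pm [v_i, v_k]$, so property \eqref{rdistinct} of Definition \ref{def of uniform} forces $v_j = v_k$, i.e. $j = k$. This shows the sum $\sum_j \varepsilon_{ji}^l v_j$ has at most one nonzero summand. If there is exactly one such $j$, then $J_{z_l}(v_i) = \varepsilon_{ji}^l v_j$, which is $v_j$ when $[v_i,v_j] = z_l$ and $-v_j$ when $[v_i,v_j] = -z_l$; if there is no such $j$, the sum is empty and $J_{z_l}(v_i) = 0$. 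These are exactly the three cases in the statement.

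I should also note a small consistency point: the cases "$[v_i,v_j] = z_l$ for some $j$" and "$[v_i,v_j] = -z_l$ for some $j$" cannot both occur for the same $i$ (that would again contradict \eqref{rdistinct} after taking the $j$ from the first and the $k$ from the second), so the piecewise description is genuinely well-defined rather than ambiguous. This is really the same application of property \eqref{rdistinct} as above.

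There is no serious obstacle here — the content is entirely bookkeeping, and the one substantive ingredient, the identity $Q(J_{z_l} v_i, v_j) = Q([v_i, v_j], z_l)$ together with orthonormality of $\calB$, has already been invoked in the lead-up to the proposition. The mild point requiring care is making sure that "uniqueness of $j$" is deduced from property \eqref{rdistinct} rather than from property \eqref{r} (which counts disjoint pairs $\{v_i, v_j\}$ globally and is not what is needed for this local statement); using the wrong property would be the only way to get the argument wrong.
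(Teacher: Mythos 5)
Your argument is correct and follows essentially the same route as the paper: the paper obtains $J_{z_l}(v_i)=\sum_j \varepsilon_{ji}^l v_j$ from the identity $Q(J_{z_l}v_i,v_j)=Q([v_i,v_j],z_l)$ and then observes, exactly as you do via property \eqref{rdistinct} of Definition \ref{def of uniform}, that for fixed $i$ and $l$ at most one $j$ satisfies $[v_i,v_j]=\pm z_l$, which collapses the sum to the stated three cases. Your extra remark that the first two cases cannot occur simultaneously is the same application of \eqref{rdistinct} and only makes explicit what the paper leaves implicit.
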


Fundamental 
properties of uniform Lie algebras are collected in the next theorem.
Recall that the Frobenius inner product on $\End(\boldR^q)$ is defined
by $\la A, B \ra = \trace (AB^T)$ for $A, B \in \End(\boldR^q).$
\begin{theorem}\label{basic properties}
Let $\frakn$ be a uniform Lie algebra of type $(p,q,r)$
with uniform basis $\calB = \{ v_i \}_{i=1}^q \cup \{ z_j\}_{j = 1}^p$
and degree $s.$ 
Let $\frakv = \myspan \{ v_i \}_{i=1}^q$ and let $\frakz = \myspan \{
z_j\}_{j = 1}^p.$  Then $\frakn$ is a two-step nilpotent Lie algebra
with the following properties.
\begin{enumerate}
\item{The center of $\frakn$ is $\frakz,$ 
and $[\frakn, \frakn] = \frakz.$}\label{center}
\item{The centralizer $C(v_i)$ of any vector $v_i$ in $\calB$ is
    spanned by $\{ v_j \, : \, [v_i, v_j] = 0\} \cup
    \{z_j\}_{j=1}^p,$ and it has dimension $p+q - s.$}\label{centralizer}
\item{For all $v_i$ in $\calB,$ the rank of $\ad_{v_i}$ is $s.$
  }\label{s meaning}
\item{For all  $z_j$ in $\calB,$ the rank of $J_{z_j}$ is $2r.$
  }\label{r meaning}
\item{The set of maps $\{ J_{z_j}  \}_{j=1}^p$ is orthogonal with
    respect to the Frobenius inner product on $\frakv.$  For 
    all $z_j$ in $\calB,$  the Frobenius norm of $J_{z_j}$ is $
    \sqrt{-\trace J_{z_j}^2}= \sqrt{2r}.$
}\label{JZ-s}
\end{enumerate}
\end{theorem}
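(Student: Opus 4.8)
The plan is to verify the five items in order, isolating first one combinatorial fact that powers items (1)--(3). The observation is: for a fixed $v_j \in \calB$, if $[v_i,v_j]$ and $[v_k,v_j]$ are both nonzero with $i \ne k$, then $[v_i,v_j] \ne \pm[v_k,v_j]$ — this is Definition \ref{def of uniform}(\ref{rdistinct}) read with the first bracket argument held fixed, via antisymmetry. Hence the nonzero brackets $[v_i,v_j]$ occupy pairwise distinct coordinate lines $\boldR z_l$, so for $v = \sum_k a_k v_k \in \frakv$ one has $[v,v_j] = 0$ if and only if $a_k = 0$ for every $k$ with $[v_k,v_j] \ne 0$. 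For item (1): Definition \ref{def of uniform}(\ref{onek}) gives $[\frakv,\frakz] = [\frakz,\frakz] = 0$ and $[\frakv,\frakv]\subseteq\frakz$, so $\frakn$ is two-step nilpotent, $\frakz$ is central, and $[\frakn,\frakn]\subseteq\frakz$; since $r \ge 1$, item (\ref{r}) of the definition shows each $z_l$ is a bracket, so $[\frakn,\frakn]=\frakz$. If a central element were $v + z$ with $0 \ne v = \sum a_k v_k$, I would pick $a_{k_0}\ne 0$ and (using $s \ge 1$ in item (\ref{s})) a $v_j$ with $[v_{k_0},v_j]\ne 0$; the observation then forces $[v + z, v_j] = [v,v_j] \ne 0$, a contradiction, so the center is exactly $\frakz$.

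For items (2) and (3): the centralizer $C(v_i)$ visibly contains $\frakz$, and by the observation (now with $v_i$ in the fixed slot) an element $\sum a_k v_k + z$ centralizes $v_i$ iff $a_k = 0$ whenever $[v_k,v_i]\ne 0$; item (\ref{s}) says there are exactly $s$ such $k$, so $C(v_i) = \myspan(\{v_k : [v_i,v_k]=0\}\cup\{z_j\}_{j=1}^p)$, of dimension $(q-s)+p$. Then $\ker\ad_{v_i} = C(v_i)$, and rank--nullity gives $\rank\ad_{v_i} = (p+q) - (p+q-s) = s$.

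For items (4) and (5) I would work with the matrix of $J_{z_l}$ in the orthonormal basis $\{v_i\}$, which by Proposition \ref{JZ} has at most one entry, necessarily $\pm 1$, in each column, the column of $v_i$ being nonzero exactly when $v_i$ is one of the $2r$ distinct vectors occurring in the $r$ disjoint pairs counted by item (\ref{r}) (here one uses that $\{v_i,v_j\}$ with $[v_i,v_j]=-z_l$ is the pair with $[v_j,v_i]=z_l$). Hence $J_{z_l}$ has exactly $2r$ nonzero entries, and if $[v_i,v_j]=z_l$ then $J_{z_l}v_i = v_j$ and $J_{z_l}v_j = -v_i$, so the nonzero images exhaust (up to sign) the $2r$ basis vectors in the pairs; thus $\rank J_{z_l} = 2r$, and $J_{z_l}^2 = -\Id$ on its $2r$-dimensional image and $0$ on its kernel. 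For item (5), $J_{z_l}$ is skew-symmetric because $Q(J_{z_l}v,w) = Q([v,w],z_l) = -Q(v,J_{z_l}w)$, so $\la J_{z_l},J_{z_m}\ra = -\trace(J_{z_l}J_{z_m}) = \sum_i Q(J_{z_m}v_i, J_{z_l}v_i)$; for $l\ne m$ every summand vanishes, since $J_{z_l}v_i$ and $J_{z_m}v_i$ being simultaneously nonzero would force $\ad_{v_i}$ to send some $v_i$ to both $\pm z_l$ and $\pm z_m$, contradicting item (\ref{rdistinct}). Finally $\|J_{z_l}\|^2 = \la J_{z_l},J_{z_l}\ra = 2r$, either by counting the $2r$ unit entries of the matrix or from $-\trace J_{z_l}^2 = 2r$.

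The only step needing genuine care is the preliminary observation together with its use in item (1): the content there is that Definition \ref{def of uniform}(\ref{rdistinct}), applied with the second bracket argument held fixed, prevents any cancellation among the $[v_k,v_j]$, which is precisely what pins the center down to $\frakz$ rather than something larger. Everything afterward is bookkeeping with Proposition \ref{JZ} and the identity $Q(J_{z_l}v,w) = Q([v,w],z_l)$.
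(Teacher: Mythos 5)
Your argument follows the paper's own proof essentially step for step: your preliminary ``observation'' is exactly the content of the paper's Lemma \ref{lin ind} (no cancellation among the nonzero brackets $[v_k,v_j]$, proved from Definition \ref{def of uniform}\eqref{rdistinct}), and items \eqref{center}--\eqref{r meaning} and the norm computation in \eqref{JZ-s} are handled the same way, with only cosmetic differences (e.g.\ you get the rank of $\ad_{v_i}$ by rank--nullity from the centralizer rather than directly from the linear independence of the image).

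The one place you should tighten is the orthogonality step in item \eqref{JZ-s}. As written, you claim that $J_{z_l}v_i$ and $J_{z_m}v_i$ ``being simultaneously nonzero'' already contradicts \eqref{rdistinct}; that is false whenever $s\ge 2$ (in Example \ref{Damek Ricci}, $J_{z_1}v_2$ and $J_{z_2}v_2$ are both nonzero), and \eqref{rdistinct} is not the relevant axiom here. What makes the summand $Q(J_{z_l}v_i,J_{z_m}v_i)$ vanish is that it is nonzero only when both images are $\pm$ the \emph{same} basis vector $v_j$, and in that case $[v_i,v_j]$ would equal both $\pm z_l$ and $\pm z_m$, impossible for $l\ne m$ because $z_l$ and $z_m$ are distinct (linearly independent) basis vectors --- which is precisely how the paper argues. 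With that one-line correction your proof is complete and coincides with the paper's.
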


 To prove the theorem we require the lemma below.
\begin{lemma}\label{lin ind}
  Let $\frakn$ be a uniform Lie algebra of type $(p,q,r)$ with
uniform basis $\calB = \{ v_i \}_{i=1}^q \cup \{ z_j \}_{j=1}^p$
and degree $s.$ Let $v_i \in \calB.$ Then the set
\[ A_i = \{ [v_i, v_j] \, : \, \text{$v_j \in \calB$ and $[v_i,
v_j] \ne 0$} \}\] is a linearly independent subset of $\{ z_j
\}_{j=1}^p$ of cardinality $s.$
\end{lemma}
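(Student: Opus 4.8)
The plan is to unpack the defining properties of a uniform Lie algebra and show directly that $A_i$ has size $s$ and is linearly independent. First I would observe that property \eqref{s} of Definition \ref{def of uniform} says exactly that for each fixed $i$ there are precisely $s$ basis vectors $v_j$ with $[v_i,v_j]\neq 0$. Each such bracket lies in $\{0,\pm z_1,\dots,\pm z_p\}$ by property \eqref{onek}(b), and since it is nonzero it equals $\pm z_k$ for some $k$. So $A_i$ is a subset of $\{z_1,\dots,z_p\}$ (after absorbing signs, or, to be careful, I should phrase $A_i$ as a subset of $\{\pm z_j\}$, but linear independence is unaffected by sign). The content is then: (i) these $s$ brackets are pairwise distinct as elements of $\{\pm z_j\}$, so $|A_i|=s$, and (ii) no two of them are equal up to sign — which is precisely what is needed for linear independence inside the set $\{z_j\}$.

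The distinctness is where property \eqref{rdistinct} does the work. Suppose $v_j\neq v_k$ are two of the $s$ neighbors of $v_i$ and $[v_i,v_j]=\pm[v_i,v_k]$; then property \eqref{rdistinct} forces $v_j=v_k$, a contradiction. Hence for distinct neighbors $v_j,v_k$ we have $[v_i,v_j]\neq\pm[v_i,v_k]$, i.e. the corresponding elements $\pm z_k$ are genuinely different basis vectors $z_k$ (not merely up to sign). Therefore $A_i$ consists of $s$ distinct members of $\{z_1,\dots,z_p\}$ (up to sign), and since distinct members of the basis $\calB$ are linearly independent, $A_i$ is a linearly independent set of cardinality $s$.

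I would write this up as follows. Let $\{v_{j_1},\dots,v_{j_m}\}$ be the basis vectors $v_j$ with $[v_i,v_j]\neq 0$; by property \eqref{s}, $m=s$. For each $t$, $[v_i,v_{j_t}]\in\{\pm z_1,\dots,\pm z_p\}$ by property \eqref{onek}, so write $[v_i,v_{j_t}]=\sigma_t z_{k_t}$ with $\sigma_t\in\{\pm 1\}$. If $k_t=k_{t'}$ for $t\neq t'$, then $[v_i,v_{j_t}]=\pm[v_i,v_{j_{t'}}]$, and since both are nonzero, property \eqref{rdistinct} gives $v_{j_t}=v_{j_{t'}}$, contradicting $t\neq t'$. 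Hence $k_1,\dots,k_s$ are pairwise distinct, so $A_i=\{z_{k_1},\dots,z_{k_s}\}$ (up to signs, which do not affect membership in the set nor linear independence) has cardinality $s$, and being a subset of the basis $\calB$ it is linearly independent.

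There is no real obstacle here; the only subtlety is bookkeeping about signs — one should be slightly careful whether $A_i$ is regarded as a subset of $\{z_j\}$ or of $\{\pm z_j\}$, but in either reading the conclusion (cardinality $s$, linearly independent) is immediate from properties \eqref{onek}, \eqref{rdistinct}, and \eqref{s}. The lemma is essentially a direct translation of the definition, and the mild point worth stating explicitly is why \eqref{rdistinct} rules out two neighbors mapping to the \emph{same} $z_k$ rather than merely to $z_k$ and $-z_k$: the hypothesis of \eqref{rdistinct} already allows the $\pm$, so it covers both cases at once.
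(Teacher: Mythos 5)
Your proof is correct and follows essentially the same route as the paper: unpack properties \eqref{onek}, \eqref{rdistinct}, and \eqref{s}, with \eqref{rdistinct} doing the key work of ruling out two neighbors of $v_i$ whose brackets agree up to sign. If anything, your bookkeeping (showing the indices $k_1,\dots,k_s$ are pairwise distinct) makes the cardinality claim slightly more explicit than the paper's version, which handles independence by excluding $z_l$ and $-z_l$ from both lying in $A_i$ and then cites property \eqref{s} directly for the count.
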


\begin{proof}  
  Let $v_i \in \calB,$ and let $A_i$ be as in the statement of the
lemma.  Since the degree $s$ in Part \eqref{s} of the definition of
uniform Lie algebra is assumed to be positive, $A_i$ is nonempty.

From Part \eqref{onek} of the definition of uniform Lie algebra we see
that $A_i \subseteq \{ \pm z_1, \pm z_2, \ldots, \pm z_p\}.$ As $\{
z_j \}_{j=1}^p$ is a subset of a basis, it is linearly independent.
Therefore, in order to show that $A_i$ is linearly independent, it
suffices to show that if $z_l \in A_i,$ then $-z_l \not \in A_i.$
Assume to the contrary that $z_l$ and $-z_l$ are both in $A_i.$ Then
there exist $v_j$ and $v_k$ in $\{ v_i \}_{i=1}^q,$ necessarily
distinct, so that $[v_i, v_j] = z_l$ and $[v_i, v_k] = -z_l.$ We then
have $[v_i, v_j] = -[v_i, v_k],$ and by Part \eqref{rdistinct} of the
definition of uniform Lie algebra, $v_j = v_k,$ a contradiction.
 Hence $z_l$ and $-z_l$
are not both in $A_i,$ and the set $A_i$ is linearly independent.

By Part \eqref{s} of   the definition of uniform Lie algebra, the
cardinality of $A_i$ is $s.$   
\end{proof}

Now we are ready to prove Theorem \ref{basic properties}.
\begin{proof}
Let $(\frakn, Q)$ be a uniform metric Lie algebra with
uniform basis $\calB = \{ v_i\}_{i=1}^q \cup \{ z_j\}_{j=1}^p.$ 
It follows easily from Part \eqref{onek} of Definition \ref{def of
uniform} that $\frakz = \myspan \{ z_j\}_{j=1}^p$ is contained 
in the center of $\frakn.$  To show that
the center is no larger, assume that $x$ is in the center.  Write
$x$ with respect to the uniform basis, so $x= \sum_{i=1}^q
\alpha_i v_i + \sum_{j=1}^p \beta_j z_j.$ We must show that
$\alpha_i = 0$ for all $i.$ Fix $i_0.$ By Part \eqref{rdistinct} of
the definition of uniform Lie algebra, there is some $v_k \in
\calB$ so that $[v_{i_0}, v_k ] \ne 0.$ Then
\[ 0 = [ v_k, x ] = \sum_{i=1}^q \alpha_l [v_k, v_i] + \sum_{j=1}^p
\beta_j [v_k, z_j] = \sum_{i=1}^q \alpha_i [v_k, v_{i}].\] Since
$[v_k, v_{i_0} ] \ne 0,$ Lemma \ref{lin ind} implies that
$\alpha_{i_0} = 0.$ Hence, the center of $\frakn$ is $\frakz.$

 Now we show that the commutator subalgebra 
$[\frakn, \frakn]$ is equal to $\frakz.$ It follows from Part
\eqref{onek} of the definition of uniform Lie algebra that $[\frakn,
\frakn] \subseteq \frakz.$ By Part \eqref{r} of the definition, $z_l
\in [\frakn, \frakn]$ for all $z_j \in \calB.$ Hence $[\frakn, \frakn]
= \frakz.$ From the properties of the bracket relations, we have
$[\frakn, [\frakn, \frakn]] = \{0 \},$ so $\frakn$ is a two-step
nilpotent Lie algebra.   

We omit the proof that $C(v_i) = \myspan ( \{ v_j \, : \, [v_i,
v_j] = 0\} \cup \{z_j\}_{j=1}^p),$ because it is similar to the
proof of Part \eqref{center}.  Since the spanning vectors are
linearly independent, the dimension of $C(v_i)$ is equal to the
cardinality of the spanning set.  By Part \eqref{s} of the
definition of uniform Lie algebra, $\{ v_j \, : \, [v_i, v_j] = 0\}
$ has cardinality $q-s.$ Hence the dimension of $C(v_i) $ is
$p+q-s.$

Part \eqref{s meaning} of the proposition follows immediately from
Lemma \ref{lin ind}.

For Part \eqref{r meaning} of the proposition, we use Proposition \ref{JZ}.
Fix $z_l.$ The definition of uniform Lie algebra says that there are
exactly $r$ disjoint pairs $\{v_i,v_j\}$ with $[v_i,v_j]=z_l.$ By
Proposition \ref{JZ}, the image of $J_{z_l}$ is the span of all
$v_i$ and $v_j$ so that $[v_i,v_j]=z_l.$ Since the cardinality of
that set is $2r,$ the map $J_{z_l}$ has rank $2r.$

Last we show that Part \eqref{JZ-s} holds. Let $z_k, z_l \in \calB.$
The number $\trace (J_{z_k} J_{z_l}^T)$ is nonzero if and only if
there are $i$ and $j$ so that $J_{z_k} v_i = \pm v_j$ and $ J_{z_l}
v_i = \pm v_j.$ But then $\pm z_k = [v_i, v_j] = \pm z_l.$ This is not
possible for distinct $k$ and $l,$ because $z_k$ and $z_l$ are
linearly independent.  By Proposition \ref{JZ}, after a change of
basis, $J_{z_l}$ is block diagonal, with nonzero blocks of form $
( \begin{smallmatrix} 0 & -1 \\ 1 & 0 \end{smallmatrix}).$ There are
precisely $r$ of these blocks. Hence $\trace (J_{z_l} J_{z_l}^T) = -
\trace (J_{z_l}^2) = 2r.$
\end{proof}

\begin{remark} 
 Statement \eqref{JZ-s} in the theorem may not be
improved to say that $ \trace J_{z}^2 = -2r$ for all unit $z
\in \frakz.$ For example, this fails in Example \ref{cyclic
algebra}.
\end{remark}

As a corollary to the previous theorem, since $p$ is
the dimension of the center, and $p + q$ equals the total
dimension, both of the parameters $p$ and $q$ are
isomorphism invariants for uniform Lie algebras.
\begin{cor}\label{invariants}
  Let $\frakn_1$ be a uniform Lie algebra of type $(p_1,q_1, r_1)$
and let $\frakn_2$ be a uniform Lie algebra of type $(p_2,q_2,
r_2).$ If $\frakn_1$ and $\frakn_2$ are isomorphic, then $p_1 =
p_2$ and $q_1 = q_2.$
\end{cor}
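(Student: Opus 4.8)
The plan is to derive both claimed equalities $p_1 = p_2$ and $q_1 = q_2$ from the previous theorem by exhibiting $p$ and $q$ as isomorphism invariants of $\frakn$. First I would invoke Part \eqref{center} of Theorem \ref{basic properties}: for a uniform Lie algebra $\frakn$ of type $(p,q,r)$ with uniform basis $\calB = \{v_i\}_{i=1}^q \cup \{z_j\}_{j=1}^p$, the center equals $\frakz = \myspan\{z_j\}_{j=1}^p$, which has dimension $p$. Since any Lie algebra isomorphism $\varphi : \frakn_1 \to \frakn_2$ carries the center of $\frakn_1$ isomorphically onto the center of $\frakn_2$, the dimensions of the two centers agree; hence $p_1 = \dim \frakz_1 = \dim \frakz_2 = p_2$.

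Next I would observe that an isomorphism preserves total dimension, so $\dim \frakn_1 = \dim \frakn_2$. From the definition of uniform Lie algebra, $\calB$ is a basis of $\frakn$ of cardinality $p + q$, so $\dim \frakn = p + q$. Combining $p_1 + q_1 = \dim \frakn_1 = \dim \frakn_2 = p_2 + q_2$ with the already-established equality $p_1 = p_2$ yields $q_1 = q_2$.

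I do not anticipate any real obstacle here; the work was already done in establishing Theorem \ref{basic properties}\eqref{center}. The only point requiring a word of care is that a uniform basis is genuinely a basis (so that $\dim \frakn = |\calB| = p + q$), which is immediate from Definition \ref{def of uniform}, and that the center is a characteristic subspace preserved under isomorphism, which is standard. One could phrase the whole argument in two sentences, which is essentially what the corollary's preamble already does; the proof merely records it formally.

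\begin{proof}
Suppose $\varphi : \frakn_1 \to \frakn_2$ is a Lie algebra isomorphism. Since $\varphi$ maps the center of $\frakn_1$ isomorphically onto the center of $\frakn_2$, the centers have equal dimension. By Part \eqref{center} of Theorem \ref{basic properties}, the center of $\frakn_i$ is $\frakz_i = \myspan\{z_j\}_{j=1}^{p_i}$, which has dimension $p_i$; hence $p_1 = p_2$. Moreover $\varphi$ preserves total dimension, and a uniform basis for $\frakn_i$ has cardinality $p_i + q_i$, so $\dim \frakn_i = p_i + q_i$. Therefore $p_1 + q_1 = p_2 + q_2$, and combining this with $p_1 = p_2$ gives $q_1 = q_2$.
\end{proof}
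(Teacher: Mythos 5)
Your argument is correct and is exactly the paper's own justification: the sentence introducing the corollary notes that $p$ is the dimension of the center (Theorem \ref{basic properties}\eqref{center}) and $p+q$ is the total dimension, both of which are preserved by any isomorphism. Your write-up simply records this formally, so there is nothing to add.
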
 

The parameter $r$ is not an isomorphism
invariant for uniform Lie algebras, as seen in the following example taken
from \cite{nikolayevsky-preEinstein}.
\begin{example}\label{r  not invariant}
Let $\frakn = \frakh_3 \oplus \frakh_3$ with basis $\calB = \{x_1,
x_2, x_3, x_4\} \cup \{ z_1, z_2\},$ 
where $[x_1,x_2] = z_1$ and $[x_3,x_4] = z_2.$ 
The basis $\calB$ is a uniform basis of type $(2,4,1).$ 

 With respect to the   basis $\calC = \{u_i\}_{i=1}^4 
\cup \{w_j\}_{j=1}^2$ given by
\[u_1 = x_1 + x_3, u_2=x_2+x_4, u_3 = x_1 - x_3, u_4=x_2- x_4,
  w_1=z_1 + z_2, w_2= z_2-z_1,\]
 the Lie bracket is given by
\[ [u_1, u_2] = [u_3,u_4] = w_1, [u_2, u_3]=[u_4,u_1]=w_2. \]
Hence $\frakn = \frakh_3 \oplus \frakh_3$ is
 isomorphic to $\frakn^\prime(2r+2)$ with $r=2$ as in Example \ref{Damek Ricci gen}. 
With respect to the basis $\calC,$  $\frakn$ is uniform of type $(2,4,2).$ 
\end{example}

Two Lie algebras $\frakm$ and $\frakn$ with bases $\calB$ and
$\calC$ respectively are said to be {\em associates} with respect
to bases $\calB$ and $\calC$ if their structure constants
agree up to sign.  That is, 
 if $\alpha_{ij}^k$ denotes a structure
constant for $\frakm$ with respect to $\calB,$ and $\beta_{ij}^k$
denotes a structure constants for $\frakn$ with respect to $\calC,$
then $(\alpha_{ij}^k)^2 = (\beta_{ij}^k)^2$ for all $(i,j,k).$ The
Lie algebras in Examples \ref{quaternionic} and \ref{quaternion
associate} are nonisomorphic associates, as are the two families in
Example \ref{Damek Ricci gen}.  It is clear from the
definition that uniform Lie algebras come in classes of associates.
\begin{prop}\label{associates}  
Suppose that the Lie algebras $\frakm$ and $\frakn$ with bases
$\calB$ and $\calC$ respectively are  associates with respect
to bases $\calB$ and $\calC.$ Then $\frakm$ is a uniform Lie
algebra with respect to the uniform basis $\calB$ if and only if
$\frakn$ is a uniform Lie algebra with respect to the uniform basis
$\calC.$
\end{prop}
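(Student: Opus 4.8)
The plan is to check that each of the four defining properties in Definition \ref{def of uniform} depends only on the squares of the structure constants, so that whenever $\frakm$ with basis $\calB$ satisfies them, so does $\frakn$ with basis $\calC$, and vice versa. Write $\calB = \{v_i\}_{i=1}^q \cup \{z_j\}_{j=1}^p$ and $\calC = \{v_i'\}_{i=1}^q \cup \{z_j'\}_{j=1}^p$ with the index correspondence induced by the associate relation, and let $\alpha_{ij}^k$, $\beta_{ij}^k$ be the respective structure constants with $(\alpha_{ij}^k)^2 = (\beta_{ij}^k)^2$ for all $(i,j,k)$. By symmetry it suffices to prove one direction: assuming $\frakm$ is uniform of type $(p,q,r)$ with degree $s$ relative to $\calB$, show $\frakn$ is uniform (necessarily of the same type and degree) relative to $\calC$.

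First I would dispose of Part \eqref{onek}. For \eqref{onek}(a), the relations $[v_i', z_j'] = 0$ and $[z_l', z_m'] = 0$ in $\frakn$ are governed by structure constants which vanish in $\frakm$ and hence, having equal squares, vanish in $\frakn$; one should note here that the associate relation is about \emph{all} structure constants of $\frakn$ relative to $\calC$, including the ones expressing brackets of $v_i'$ with $z_j'$, so these are forced to be zero. For \eqref{onek}(b), each $[v_i', v_j'] = \sum_k \beta_{ij}^k z_k'$, and since $(\beta_{ij}^k)^2 = (\alpha_{ij}^k)^2$ and $[v_i,v_j] \in \{0, \pm z_1, \ldots, \pm z_p\}$ forces each $\alpha_{ij}^k \in \{-1,0,1\}$ with at most one nonzero per $(i,j)$, the same holds for $\beta_{ij}^k$, so $[v_i', v_j'] \in \{0, \pm z_1', \ldots, \pm z_p'\}$. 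In particular $[v_i',v_j'] = 0$ if and only if $[v_i,v_j]=0$, and $[v_i',v_j'] = \pm z_l'$ if and only if $[v_i,v_j] = \pm z_l$ (possibly with opposite sign).

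Next, Part \eqref{rdistinct}: if $[v_i',v_j'] \ne 0$ and $[v_i',v_j'] = \pm [v_i',v_k']$, then in particular $[v_i,v_j] \ne 0$ (by the equivalence just noted) and $[v_i,v_j]$, $[v_i,v_k]$ are nonzero multiples of the same $z_l$, hence $[v_i,v_j] = \pm[v_i,v_k]$, so $v_j = v_k$ by \eqref{rdistinct} for $\frakm$, whence $v_j' = v_k'$. For Part \eqref{r}: fix $l$; the pairs $\{v_i',v_j'\}$ with $[v_i',v_j'] = z_l'$ are exactly the pairs with $[v_i,v_j] = \pm z_l$ and $\beta_{ij}^l = +1$ — and here one must be slightly careful, since flipping a sign could in principle move a pair from the ``$z_l$'' list to the ``$-z_l$'' list. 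The clean way to see the count is preserved is: the pairs $\{v_i',v_j'\}$ with $[v_i',v_j'] = \pm z_l'$ biject with the pairs $\{v_i,v_j\}$ with $[v_i,v_j] = \pm z_l$, a set of cardinality $2r$ in $\frakm$; then Part \eqref{rdistinct} applied to $\frakn$ (already established) guarantees that for each such pair, exactly one ordering gives $+z_l'$, so there are exactly $r$ disjoint pairs with $[v_i',v_j'] = z_l'$. Finally, Part \eqref{s}: for fixed $j$, $[v_i',v_j'] \ne 0$ if and only if $[v_i,v_j] \ne 0$, so the count is the same $s$. Thus $\calC$ is a uniform basis for $\frakn$ of type $(p,q,r)$ and degree $s$, and by symmetry the converse holds. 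The only real subtlety — the ``main obstacle,'' though it is minor — is the bookkeeping in Part \eqref{r}, where a sign flip on a structure constant could relocate a pair between the $z_l$-list and the $-z_l$-list; the resolution is to count unordered pairs with $[v_i,v_j] = \pm z_l$ (a quantity manifestly depending only on squares) and invoke the already-verified Part \eqref{rdistinct} for $\frakn$ to split each such pair correctly.
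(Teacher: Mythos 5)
Your proof is correct and is exactly the definition-check the paper has in mind: the paper states this proposition without proof, remarking just beforehand that it is ``clear from the definition'' that uniform Lie algebras come in classes of associates, and your verification of the four conditions of Definition~\ref{def of uniform} is that intended argument. One cosmetic slip: the set of \emph{unordered} pairs $\{v_i,v_j\}$ with $[v_i,v_j]=\pm z_l$ has cardinality $r$, not $2r$ (it is $2r$ only if you count ordered pairs), but this does not affect your conclusion, since the index pairs carrying a nonzero multiple of $z_l$ are pinned down by the squared structure constants and the final count of $r$ disjoint pairs with bracket $+z_l'$ comes out correctly via skew-symmetry and the already-verified Part~\eqref{rdistinct}.
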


\begin{remark}\label{Y hat}
Given a family of associate uniform Lie algebras (relative to given
bases) as in Proposition \ref{associates}, how can we determine which
members of the family are isomorphic?  In general, it can be very difficult
to find an isomorphism between two Lie algebras.  However, there is a
simple computational method for finding isomorphic associate Lie
algebras when the two presentations
 are related by a change of basis which simply
changes signs of basis vectors.  This procedure is described in
Theorem B of \cite{payne-agag}; one simply needs to find the orbits of
a $\boldZ_2^n$ action on $\boldZ_2^m$ (defined in Definition 5 in
\cite{payne-agag}).  We will use this method to find these 
kinds of  isomorphic Lie
algebras within classes of associate uniform Lie algebras, but because
the set-up is somewhat technical, we do not reproduce the theorem
statement here and refer the reader to \cite{payne-agag} for details.
Worked out examples of the method are given in \cite{payne-methods};
see Examples 3.6, 3.7 and 4.5 there.
\end{remark}

When the parameter $r$ of a uniform Lie algebra with uniform basis
$\calB$ is equal to one, then it is isomorphic to all of its
associates relative to $\calB.$
\begin{prop}\label{r = 1}
Let $\frakn$ be a uniform Lie algebra of type 
$(p,q,1)$ with uniform basis $\calB.$  If $\frakm$ is an associate to
$\frakn$ with respect to the basis $\calB,$ then $\frakm$ and 
$\frakn$ are isomorphic.  
\end{prop}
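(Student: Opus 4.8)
The plan is to exhibit an explicit isomorphism $\phi\colon \frakn \to \frakm$ that is diagonal with respect to $\calB,$ acting as the identity on the $v$-vectors and by a sign on each central basis vector. The crucial point is that when $r=1$ each $z_l$ in the uniform basis is the bracket of exactly one pair of $v$-vectors, and the $z_l$ are central, so the sign attached to $z_l$ can be chosen independently of every other bracket relation; there is no "cycle" obstruction of the kind that makes associates nonisomorphic in the examples with $r\ge 2.$

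First I would fix notation. Write $\calB = \{v_i\}_{i=1}^q \cup \{z_l\}_{l=1}^p.$ Applying Part \eqref{r} of Definition \ref{def of uniform} to $\frakn$ with $r=1,$ for each $l \in \{1,\dots,p\}$ there is exactly one unordered pair of basis vectors bracketing to $z_l$ in $\frakn;$ choose the ordering $(i(l),j(l))$ so that $[v_{i(l)},v_{j(l)}]_\frakn = z_l.$ By Parts \eqref{onek} and \eqref{rdistinct}, for every ordered pair $(a,b)$ the bracket $[v_a,v_b]_\frakn$ is either $0$ or $\pm z_l$ for a single index $l,$ and when it is nonzero the unordered pair $\{v_a,v_b\}$ must be the unique pair associated to that $l.$ Since $\frakm$ is an associate of $\frakn$ relative to $\calB,$ its structure constants relative to $\calB$ agree with those of $\frakn$ up to sign; in particular the bracket relations of $\frakm$ have the same support, so $[v_{i(l)},v_{j(l)}]_\frakm = \epsilon_l z_l$ for some $\epsilon_l \in \{1,-1\}.$ (By Proposition \ref{associates}, $\frakm$ is itself uniform of type $(p,q,1),$ though this will not be needed.)

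Next I would define $\phi\colon \frakn \to \frakm$ to be the linear map with $\phi(v_i) = v_i$ for all $i$ and $\phi(z_l) = \epsilon_l z_l$ for all $l;$ it is clearly bijective. To verify that $\phi$ is a Lie algebra homomorphism it suffices to check it on brackets of basis vectors. Brackets of the form $[v_i, z_l]$ and $[z_l, z_m]$ vanish in both $\frakn$ and $\frakm$ by Part \eqref{onek}, so nothing is needed there. For a bracket $[v_a, v_b]_\frakn$ that is zero, the bracket $[v_a, v_b]_\frakm$ is also zero and the required identity holds trivially. If $[v_a,v_b]_\frakn = \delta z_l$ with $\delta \in \{1,-1\},$ then $\{v_a,v_b\} = \{v_{i(l)},v_{j(l)}\}$ by uniqueness, and comparing the two orderings of this pair shows $[v_a,v_b]_\frakm = \delta\epsilon_l z_l.$ Hence
\[ \phi([v_a,v_b]_\frakn) = \phi(\delta z_l) = \delta\epsilon_l z_l = [v_a,v_b]_\frakm = [\phi(v_a),\phi(v_b)]_\frakm, \]
so $\phi$ is an isomorphism of Lie algebras.

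The only step requiring any care — and the reason the hypothesis $r=1$ is essential — is the well-definedness of the signs $\epsilon_l.$ In general, matching the structure constants of two associates by a diagonal change of basis amounts to solving a $\boldZ_2$-linear system (this is the mechanism behind the method referenced in Remark \ref{Y hat} and Theorem B of \cite{payne-agag}), and that system can be inconsistent, which is exactly what happens for the nonisomorphic associates in Examples \ref{quaternionic} and \ref{quaternion associate}. When $r=1,$ however, each central basis vector $z_l$ occurs in precisely one bracket relation and lies in the center, so the equation determining $\epsilon_l$ contains no other unknown and constrains no other $\epsilon_m;$ the system is triangular and automatically solvable. This is what the argument above exploits, and I expect no further obstacle.
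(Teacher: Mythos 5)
Your proof is correct and is essentially the paper's argument: since $r=1$ each $z_l$ occurs in exactly one bracket relation, so flipping the sign of $z_l$ alone adjusts that single structure constant independently, and composing such sign changes realizes every associate. You simply package the same idea as an explicit diagonal isomorphism $\phi$ (identity on the $v_i,$ signs $\epsilon_l$ on the $z_l$), with a somewhat more detailed verification than the paper gives.
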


\begin{proof}
Let $\alpha_{ij}^k$ denote structure constants relative to the
uniform basis $\{v_i\}_{i=1}^k \cup \{z_k\}_{k=1}^p,$ so $[v_i,
v_j] = \sum_{k=1}^p \alpha_{ij}^k z_k.$ Fix $k.$ But since $r=1,$
for all $k$ there is only one pair of indices $\{i,j\}$ so that 
structure constants $\alpha_{ij}^k$ and  $\alpha_{ji}^k$ are nonzero.  A change
of basis sending $z_k$ to $- z_k$ while leaving all other basis
vectors fixed changes the signs of$\alpha_{ij}^k$ and  $\alpha_{ji}^k$  while leaving all
remaining structure constants the same.  By making all possible
combinations of such changes of basis we obtain all possible sign
choices for the structure constants.
\end{proof}

Finally, we give some useful relationships the parameters $p, q, r$
and $s$ that we will use in Section \ref{classification}.
\begin{prop}\label{constraints}
  Let $\frakn$ be a 
uniform Lie algebra of type $(p, q,   r)$
with degree $s.$   Then 
\begin{enumerate}
\item{$2rp=sq,$}
\item{$s \le p \le rp =  \smallfrac{1}{2} sq \le (\begin{smallmatrix} q
      \\ 2 \end{smallmatrix}),$}\label{bounds 1}
\item{$2 \le 2r \le q.$}
\end{enumerate}
Let $G = (V,E)$ be a uniformly colored graph of type $(p, q,   r)$
with degree $s.$  
Then the same constraints hold, and $p$ divides $|E|.$
\end{prop}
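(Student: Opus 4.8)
The plan is to read off each inequality from the basic structural facts already established, so that the proof is essentially a bookkeeping exercise built on Lemma \ref{lin ind} and Theorem \ref{basic properties}. First I would establish the identity $2rp = sq$ by double-counting the nonzero structure constants relative to the uniform basis $\calB$: by Part \eqref{r} of Definition \ref{def of uniform}, each $z_l$ ($l = 1, \ldots, p$) is the bracket of exactly $r$ disjoint pairs $\{v_i, v_j\}$, contributing $2r$ nonzero structure constants $\alpha_{ij}^l$ (one for each ordering), so the total count is $2rp$; on the other hand, by Part \eqref{s} each $v_j$ ($j = 1, \ldots, q$) has exactly $s$ basis vectors $v_i$ with $[v_i, v_j] \ne 0$, and each such nonzero bracket contributes one nonzero structure constant, giving a total of $sq$. (This identity was already noted right after Definition \ref{def of uniform}, so I can simply cite it.)

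Next I would dispatch the chain in \eqref{bounds 1}. The bound $rp = \tfrac{1}{2} sq \le \binom{q}{2}$ is immediate: the $rp$ pairs $\{v_i, v_j\}$ with $[v_i,v_j] = z_l$ (as $l$ ranges over $1, \ldots, p$ and we take the $r$ pairs for each) are distinct — distinct across different $l$ because a pair $\{v_i,v_j\}$ yields a single value $[v_i,v_j]$, and disjoint for fixed $l$ by Part \eqref{r} — hence $rp$ is at most the total number of $2$-element subsets of $\{v_i\}_{i=1}^q$, which is $\binom{q}{2}$. The equality $rp = \tfrac12 sq$ is just a rearrangement of part (1). The inequality $p \le rp$ holds because $r \ge 1$. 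For $s \le p$: fix any $v_i$; by Lemma \ref{lin ind} the set $A_i = \{[v_i,v_j] : [v_i,v_j] \ne 0\}$ is a linearly independent subset of $\{z_j\}_{j=1}^p$ of cardinality $s$, so $s \le p$. Finally, part (3): $2r \le q$ follows from Part \eqref{r meaning} of Theorem \ref{basic properties}, since $J_{z_l}$ has rank $2r$ and acts on the $q$-dimensional space $\frakv$; and $2r \ge 2$ because $r \ge 1$ (the parameters are positive integers by hypothesis).

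For the graph statement, I would invoke the correspondence (to be established in Section \ref{graphs}) between uniform Lie algebras of type $(p,q,r)$ with degree $s$ and uniformly colored graphs of the same type and degree: all four parameters have the same combinatorial meaning on the graph side — $q = |V|$, $p = |S|$ (the number of colors), each color class is a perfect matching on its $2r$ endpoints consisting of $r$ disjoint edges, and $s$ is the (constant) degree of the underlying graph — so the counting arguments above are verbatim the same, giving $2rp = sq$, the chain in \eqref{bounds 1}, and $2 \le 2r \le q$. The claim $p \mid |E|$ then follows because the edge set $E$ decomposes into the $p$ color classes, each of which has exactly $r$ edges (Part \eqref{r}), so $|E| = rp$, which is divisible by $p$. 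I expect no real obstacle here: the only subtlety is that the precise definition of uniformly colored graph and the translation dictionary live in Section \ref{graphs}, so in the writeup I would either forward-reference that section or phrase the graph half so that it uses only the defining counting properties directly.
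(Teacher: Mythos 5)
Your proposal is correct and follows essentially the same elementary counting route as the paper: $2rp=sq$ by double-counting nonzero structure constants, $s\le p$ from Lemma \ref{lin ind}, the remaining bounds from the disjoint pairs in the definition, and $|E|=rp$ for the graph half. The only cosmetic differences are that the paper obtains $rp\le\binom{q}{2}$ via the bound $s\le q-1$ rather than by counting distinct pairs, and gets $2r\le q$ directly from the $r$ disjoint pairs drawn from $q$ vertices rather than by citing the rank of $J_{z_l}$, which encodes the same fact.
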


\begin{proof}  We have previously remarked that $2rp=sq$
because both numbers are equal to the number of nontrivial
structure constants with respect to the uniform basis.

 By Lemma \ref{lin ind}, $s \le p.$
For each $v_i$ there are exactly $s$ basis vectors $v_j$ so
that $[v_i, v_j] \ne 0.$ The cardinality of $\{ v_j
\}_{j=1}^q$ is $q,$ and $[v_i, v_i]= 0,$ so $s \le q-1.$
Hence $\smallfrac{1}{2} sq \le \smallfrac{1}{2} (q-1)q =
(\begin{smallmatrix} q \\ 2 \end{smallmatrix}).$
Substituting $2rp$ for $sq$ gives $rp \le
(\begin{smallmatrix} q \\ 2 \end{smallmatrix}).$ 
Hence Part \eqref{bounds 1} of the proposition holds.

 Let $z_k$ be an element of the
uniform basis.  By Property \eqref{r} of a uniform Lie
algebra, there are $r$ disjoint pairs $\{ v_i, v_j \}$ drawn from 
$\{v_i\}_{i=1}^q,$ a set of cardinality $q,$ so
that $[v_i,v_j] =z_k.$ But then $q \ge 2r.$ Since $r \ge
1,$ we get $ q \ge 2.$
\end{proof}

\section{Uniformly colored graphs}\label{graphs}
 
\subsection{Definition and examples  
of uniformly colored graph}
Let $(V,E)$ be a graph with vertex set $V$ and edge set $E.$ We always
assume that all of our graphs are without loops or multiple edges. 
A graph is {\em regular of degree $s$} if each vertex has
exactly $s$ neighbors. An
{\em orientation} of the graph $(V,E)$ is a map $\sigma: E \to V
\times V$ assigning to each edge $\{ v,w\}$ one of the ordered pairs
$(v,w)$ and $(w,v).$ Then $(V,E),$ together with $\sigma,$ naturally
defines a directed graph.   

 An {\em edge coloring} of a directed or undirected graph $G =
(V,E)$ is a map $c$ from $E$ to a set $S.$ Let $G = (V,E)$ be a
graph with edge coloring $c: E \to S.$  A coloring is {\em proper}
if no adjacent edges have the same color. We say that a mapping $\phi
: G \to G$ is a {\em color-permuting automorphism} or {\em
automorphism of the colored graph} if $\phi$ is an automorphism of
the underlying graph, and there is a permutation $\sigma$
of the color set $S$ so that $c \circ \phi (v,w) = \sigma \circ c
(v, w)$ for all $(v,w) \in E.$ Two edge colorings $b$ and $c$ of a
graph are called {\em equivalent} if there is a color-permuting
automorphism $\phi$ so that $c = \phi \circ b.$

\begin{defn}\label{def of graph coloring} 
Let $(V,E)$ be an undirected graph with $q$ vertices which is regular
of degree $s.$ Let $S$ be a set of cardinality $p,$ and let the
surjective function $c: E \to S$ define an edge coloring of $(V,E).$
Then the edge coloring is a {\em uniform edge coloring} of type
$(p,q,r)$ with degree $s$ if the edge-colored graph has the following
properties.
\begin{enumerate}
\item{The coloring is proper.}\label{adjacent}
\item{Each color occurs the same number, $r,$ of times; i.e. 
 the cardinality
  of $c^{-1} (\{s\})$ is $r,$ for all $s \in S.$}\label{same number}
\end{enumerate}
When the undirected graph $(V,E)$ is endowed with such a coloring
it is called a {\em uniformly colored graph of type $(p,q,r).$} A
directed graph is a {\em uniformly colored graph of type $(p,q,r)$}
if it is an undirected uniformly colored graph of type $(p,q,r)$
endowed with an orientation.  A graph with a uniform edge coloring
is said to be {\em uniformly colored}.
\end{defn}

Note that if a uniformly colored graph $G=(V,E)$ is
directed, because $G$ arises from imposing an orientation on
an undirected graph, it is not possible for both
$(v_i,v_j)$ and $(v_i,v_j)$ to be in $E.$ For the
convenience of the reader, the next proposition summarizes
how the parameters $p,q,r$ and $s$ for a uniformly colored
graph are reflected in the graph.  The proof is an easy
application of definitions.
\begin{prop}\label{graph parameters}
A uniformly colored graph $G=(V,E)$ of type $(p,q,r)$ with degree $s$
has $q$ vertices and $rp$ edges.  The degree of the
underlying undirected regular graph is $s.$ There are $p$
colors, and each color occurs on exactly $r$ different edges.  
\end{prop}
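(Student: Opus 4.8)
The plan is simply to read the four assertions directly off Definition \ref{def of graph coloring}, supplementing with a one-line count for the number of edges. By hypothesis $G$ is a uniformly colored graph of type $(p,q,r)$ with degree $s$; so by Definition \ref{def of graph coloring} the underlying undirected graph $(V,E)$ has $q$ vertices and is regular of degree $s$, the color set $S$ has cardinality $p$, and by Part \eqref{same number} of that definition every color class $c^{-1}(\{z\})$, $z \in S$, has exactly $r$ elements. This already establishes every claim in the proposition except the count of edges.

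For the number of edges I would observe that the coloring $c: E \to S$ is a function, so $E$ decomposes as the disjoint union $E = \bigsqcup_{z \in S} c^{-1}(\{z\})$ of its color classes. There are $p$ color classes and each has cardinality $r$, whence $|E| = rp$. (Alternatively, one can combine $s$-regularity on $q$ vertices with the handshake lemma to obtain $|E| = sq/2$, and then invoke the identity $2rp = sq$ from Proposition \ref{constraints}.)

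There is no real obstacle here: the statement is a straightforward unwinding of the definitions, and the only point not literally contained in the hypotheses — the formula $|E| = rp$ — is immediate from the fact that the color classes partition the edge set.
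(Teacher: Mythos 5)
Your argument is correct and is exactly the route the paper intends: the paper gives no written proof beyond remarking that the proposition ``is an easy application of definitions,'' and your unwinding of Definition \ref{def of graph coloring}, together with the observation that the color classes $c^{-1}(\{z\})$, $z\in S$, partition $E$ into $p$ sets of size $r$ so that $|E|=rp$, supplies precisely that application.
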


A {\em decomposition} of a graph G is a set of subgraphs $H_1,...,
H_k$ which partition the edges of G.  Uniform edge colorings of
undirected graphs of type $(p,q,r)$ and degree $s$ are equivalent to
decompositions of $s$-regular graphs on $q$ vertices into $p$ copies
of the $r$-fold disjoint sum $K_2 + \cdots + K_2.$
\begin{prop}\label{decomposition}
  Let $G$ be an undirected uniformly colored graph of type $(p,q,r)$ with
degree $s.$ For $k=1, \ldots, p,$ let $H_k$ be the subgraph of $G$
consisting of edges colored with the $k$th color.  Then $H_1, H_2,
\ldots, H_p$ defines a decomposition of $G$ into $p$
disjoint subgraphs, all isomorphic to the disjoint sum $K_2 +
\cdots + K_2$ of $r$ copies of $K_2.$

Conversely, if $G= (V,E)$ is an undirected $s$-regular graph with $q$ vertices
and $H_1, \ldots, H_p$ is a decomposition of $G$ into disjoint
subgraphs each isomorphic to the disjoint sum $K_2 + \cdots + K_2,$
then the coloring $c: E \to [p]$ defined by coloring an edge $k$ if
it is in the factor $H_k$ is a uniform edge coloring of $G.$
\end{prop}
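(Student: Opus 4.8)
The plan is to verify the two directions separately, in each case unwinding the relevant definition into a statement about the subgraphs $H_k$. For the forward direction, suppose $G = (V,E)$ is an undirected uniformly colored graph of type $(p,q,r)$ with degree $s$, and for each $k \in \{1,\dots,p\}$ let $H_k$ be the spanning subgraph of $G$ whose edge set is $c^{-1}(\{k\})$ (after fixing a labeling $S = \{1,\dots,p\}$). First I would note that since $c$ is a function, the sets $c^{-1}(\{1\}),\dots,c^{-1}(\{p\})$ partition $E$, so the $H_k$ are edge-disjoint and together cover all edges; that is exactly the requirement that they form a decomposition of $G$. Next I would identify each $H_k$ up to isomorphism. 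By Part \eqref{same number} of Definition \ref{def of graph coloring}, $H_k$ has exactly $r$ edges. By Part \eqref{adjacent}, the coloring is proper, so no two edges of $H_k$ share a vertex; hence $H_k$ is a matching of size $r$, i.e. its $r$ edges are pairwise vertex-disjoint. A graph consisting of $r$ pairwise vertex-disjoint edges (together with possibly some isolated vertices, which we discard, or equivalently regard the $H_k$ as subgraphs on their own vertex supports) is by definition isomorphic to the disjoint sum $K_2 + \cdots + K_2$ of $r$ copies of $K_2$. This completes the first direction.

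For the converse, suppose $G = (V,E)$ is an undirected $s$-regular graph on $q$ vertices and $H_1,\dots,H_p$ is a decomposition of $G$ into subgraphs each isomorphic to a disjoint sum of $r$ copies of $K_2$. Define $c \colon E \to [p]$ by setting $c(e) = k$ when $e$ lies in $H_k$. Since the $H_k$ partition $E$, this is a well-defined function, and it is surjective because each $H_k$ is nonempty (it has $r \ge 1$ edges). I would then check the two defining properties of a uniform edge coloring. For Part \eqref{same number}: since $H_k \cong K_2 + \cdots + K_2$ with $r$ summands, $H_k$ has exactly $r$ edges, so $|c^{-1}(\{k\})| = r$ for every $k$. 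For Part \eqref{adjacent} (properness): if two edges $e, e'$ shared a vertex and had the same color $k$, they would both lie in $H_k$ and share a vertex, contradicting the fact that the edges of $H_k$, being those of a disjoint sum of copies of $K_2$, are pairwise vertex-disjoint. Hence $c$ is a proper coloring, and so $c$ is a uniform edge coloring of type $(p,q,r)$ with degree $s$ (the degree $s$ and vertex count $q$ being inherited directly from $G$, consistent with Proposition \ref{graph parameters}).

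I do not anticipate a genuine obstacle here: the proposition is essentially a translation between two equivalent ways of recording the same data, and both directions reduce to the observation that ``proper coloring with each color class of size $r$'' says precisely ``each color class is a matching of $r$ edges.'' The only point requiring a little care is bookkeeping about vertices — whether the $H_k$ are to be regarded as spanning subgraphs (with isolated vertices) or as subgraphs on their edge supports — but this does not affect the isomorphism type claimed, since $K_2 + \cdots + K_2$ is understood up to isolated vertices in this context, and Proposition \ref{graph parameters} already fixes the conventions on $q$, $p$, $r$, and $s$. One should also remark that the correspondence is a genuine bijection at the level of equivalence classes only after choosing an identification of $S$ with $[p]$; relabeling the colors corresponds to relabeling the pieces $H_k$, which is why the statement is phrased in terms of decompositions rather than ordered tuples.
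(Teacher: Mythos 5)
Your proof is correct and follows exactly the route the paper intends: the paper simply remarks that the proposition follows from the definition of uniformly colored graph, and your argument is the straightforward unwinding of that definition (color classes partition the edges, properness plus $r$ edges per color means each class is a matching of size $r$, and conversely). Your extra remarks on isolated vertices and color relabeling are fine bookkeeping points but not needed beyond the definitional check.
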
  
 (We use $[n]$ to denote the set $\{1, 2, \ldots, n \}.$)
 The proposition follows from the definition of uniformly colored
graph.  The next example shows that any regular graph admits a uniform
edge coloring.
\begin{example}\label{trivial}
Let $(V,E)$ be a regular graph with $q$ vertices and $p$ edges.  Let
the identity map $id: E \to E$ define a coloring.  The resulting
edge-colored graph is uniformly colored of type $(p,q,1).$  
\end{example}

\subsection{Graph-algebra correspondences}

We associate to any uniform Lie algebra a uniform 
graph, and to any uniform graph a uniform Lie algebra.

\begin{defn}\label{algebra to graph}
Let $(\frakn,Q)$ be a uniform Lie algebra of type $(p,q,r)$ with
uniform basis $\calB = \{v_i\}_{i=1}^q \cup \{ z_j\}_{j=1}^p.$ Define
a set of vertices by $V = \{v_i\}_{i=1}^q,$ and let the set of
directed edges be
\[ E = \{ (v_i, v_j) \, : \, \text{$[v_i, v_j] = z_k$ for some $k$}
\}.\] 
Define an edge coloring of the graph $(V,E)$ by mapping $E$ into
the set $\{ z_j\}_{j=1}^p$ so that the edge $(v_i, v_j) \in E$ is
assigned to $z_k$ if $[v_i, v_j] = z_k.$
\end{defn}
By Property \eqref{rdistinct} of
the definition of uniform metric Lie algebras, the coloring function is
well-defined. 

\begin{prop}\label{graph properties}  
 The graph associated to a uniform Lie algebra of type $(p,q,r)$ with
degree $s,$ as in Definition \ref{algebra to graph}, is a uniformly
colored graph of type $(p,q,r)$ with degree $s.$ 
\end{prop}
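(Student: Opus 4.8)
The plan is to verify, one at a time, each of the four defining properties of a uniformly colored graph of type $(p,q,r)$ with degree $s$ from Definition \ref{def of graph coloring}, reading off the needed facts about $\frakn$ from Definition \ref{def of uniform} and from Proposition \ref{graph parameters}'s bookkeeping. The construction in Definition \ref{algebra to graph} gives a directed graph $(V,E)$ with $|V|=q$, together with an edge coloring $c\colon E\to\{z_j\}_{j=1}^p$; since a directed uniformly colored graph is by definition an orientation of an undirected one, I will really be checking the underlying undirected graph. The four things to establish are: the underlying undirected graph is simple (no loops, no multi-edges) and $s$-regular; the coloring $c$ is surjective onto a $p$-element set; the coloring is proper; and each color is used on exactly $r$ edges.

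First I would note the graph is loopless because $[v_i,v_i]=0$ for all $i$, so $(v_i,v_i)\notin E$, and it has no multiple edges because by Property \eqref{onek}(b) of Definition \ref{def of uniform} $[v_i,v_j]$ is a single basis vector (or $0$), and we never put both $(v_i,v_j)$ and $(v_j,v_i)$ in $E$ since $[v_j,v_i]=-[v_i,v_j]$ would be $-z_k$, not a positive basis vector — so the orientation is well-defined and the undirected graph is simple. For regularity, fix $v_i$; the neighbors of $v_i$ in the underlying undirected graph are exactly the $v_j$ with $[v_i,v_j]\neq 0$ (going to $z_k$ for some $k$, possibly after flipping sign), and Property \eqref{s} says there are exactly $s$ such $v_j$; hence every vertex has degree $s$ and the graph is $s$-regular on $q$ vertices. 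Surjectivity of $c$ onto $\{z_j\}_{j=1}^p$ follows from Property \eqref{r}: each $z_l$ is realized as $[v_i,v_j]$ for (at least, in fact exactly, $r\ge 1$) some pair, so $z_l$ lies in the image; the color set has cardinality $p$ since $\{z_j\}_{j=1}^p$ is part of a basis.

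Next, properness: suppose two edges of the undirected graph share a vertex $v_i$ and have the same color $z_l$; say they are $\{v_i,v_j\}$ and $\{v_i,v_k\}$. Then $[v_i,v_j]=\pm z_l$ and $[v_i,v_k]=\pm z_l$, so $[v_i,v_j]=\pm[v_i,v_k]$, and Property \eqref{rdistinct} forces $v_j=v_k$, so the two edges coincide; hence no two distinct adjacent edges share a color and $c$ is proper. Finally, for the count: fix a color $z_l$ and consider the edges colored $z_l$. An undirected edge $\{v_i,v_j\}$ is colored $z_l$ precisely when $[v_i,v_j]=z_l$ or $[v_j,v_i]=z_l$, i.e. when the unordered pair $\{v_i,v_j\}$ satisfies $[v_i,v_j]=\pm z_l$; by Property \eqref{r} there are exactly $r$ disjoint pairs $\{v_i,v_j\}$ with $[v_i,v_j]=z_l$, and these are exactly the $z_l$-colored edges (the orientation records which of $z_l$ or $-z_l$ is the bracket, but the edge is present in either case, and such pairs are in bijection with the $r$ pairs of Property \eqref{r} after possibly swapping the order within a pair). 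So each color occurs exactly $r$ times, and the degree parameter of the coloring is the regularity degree $s$, as computed above.

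I do not anticipate a genuine obstacle — every step is a direct translation between a clause of Definition \ref{def of uniform} and a clause of Definition \ref{def of graph coloring}. The one point requiring a little care is making sure the directed-to-undirected passage is clean: that the orientation is consistently defined (never both $(v_i,v_j)$ and $(v_j,v_i)$ present), and that ``$z_l$-colored undirected edge'' corresponds bijectively to ``disjoint pair $\{v_i,v_j\}$ with $[v_i,v_j]=z_l$'' rather than double-counting or missing the reversed orientations. Once that is spelled out, the verification that the type is $(p,q,r)$ and the degree is $s$ is immediate from Properties \eqref{onek}--\eqref{s}.
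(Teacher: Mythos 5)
Your proposal is correct and follows essentially the same route as the paper: a direct clause-by-clause translation from Definition \ref{def of uniform} to Definition \ref{def of graph coloring} (loops/multi-edges and orientation consistency from skew-symmetry and linear independence of the $z_j$'s, properness from Property \eqref{rdistinct}, color counts and surjectivity from Property \eqref{r}). You are in fact slightly more thorough than the paper, which leaves the $s$-regularity check and the directed-to-undirected bookkeeping implicit.
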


\begin{proof} 
The graph has no loops because $[v_i,v_i] = 0$ for all $i.$ There are
no multiple edges from $v_i$ to $v_j$ because $[v_i, v_j]$ is
single-valued, and it is not possible to have both $(v_i,v_j)$ and
$(v_j,v_i)$ as an edge because $[v_i,v_j] = [v_j, v_i]$ implies that
$[v_i,v_j] = 0.$ Clearly there are $q$ vertices and $p$ colors.  By
Property \eqref{rdistinct} of Definition \ref{def of uniform},
the coloring is proper.  Property \eqref{r} of
Definition \ref{def of uniform} insures that each color occurs the
exactly $r$ times and that the coloring function is surjective.
\end{proof}
Now we associate a uniform Lie algebra to any uniformly colored graph.
\begin{defn}\label{graph to algebra}
Let $(V,E)$ be a uniformly colored directed graph of type $(p,q,r)$
and degree $s.$ Let $V = \{v_i\}_{i=1}^q,$  let $S = \{
z_j\}_{j=1}^{p}$ and let $c: E \to S$ be the surjective edge
coloring function.  Let $\frakn$ be the vector space with basis $V
\cup S.$ Define a Lie bracket on $\frakn$ by setting
\begin{equation}\label{bracket}  
[v_i, v_j]  = 
\begin{cases}  
 c(v_i, v_j)  & \text{if $(v_i,v_j) \in E$} \\ 
-c(v_i, v_j)  & \text{if $(v_j,v_i) \in E$} \\
  0   & \text{otherwise} \\
\end{cases},\end{equation}
setting $[v_i, z_j] = 0$ and $[z_j, z_k] = 0$ for all  $i,j,k,$ and 
extending bilinearly.  
\end{defn}
As the graph is uniform, it is not possible for both $(v_i,v_j)$ and
$(v_j,v_i)$ to be in $E,$ so the Lie bracket in the definition is
well-defined.  By definition the Lie bracket is skew-symmetric.  It is
easy to see that $[\frakn,\frakn] = \myspan \{z_j\}_{j=1}^p$ and
$[\frakn, [\frakn,\frakn] ] = 0,$ so the algebra is two-step
nilpotent. Hence, the Jacobi Identity holds trivially.  Thus, the
product in Equation \eqref{bracket} truly does define a Lie algebra.

\begin{example}\label{cycle graph}
The uniformly colored graph affiliated with the cyclic Lie algebra
$\frakm(q)$ in Example \ref{cyclic algebra} is the cycle graph $C_q$
with $q$ vertices (endowed with an orientation).  All $q$ edges have
different colors.
\end{example}

The correspondences we have defined between uniformly colored graphs
and uniform Lie algebras are inverse to one another.
\begin{prop}\label{graph to algebra prop}  
The correspondences between uniform Lie algebras (with fixed uniform
bases) and uniformly colored graphs defined in Definitions \ref{algebra
to graph} and Definition \ref{graph to algebra} are inverse to
one another.  The Lie algebra  corresponding to a uniformly
colored graph of type $(p,q,r)$ with degree $s$ 
 is a uniform Lie algebra
with uniform basis  of type $(p,q,r)$ and degree
$s.$ 
\end{prop}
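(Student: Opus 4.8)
The plan is to verify the three assertions in Proposition~\ref{graph to algebra prop} in turn: that the two constructions are mutually inverse, and that the Lie algebra produced from a uniformly colored graph is genuinely uniform with the advertised parameters. Throughout I would work with a fixed orientation on the graph and a fixed labeling of vertices and colors, so that "basis" and "edge set" are unambiguously pinned down.

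First I would check that going from a uniform Lie algebra $\frakn$ (with uniform basis $\calB$) to its associated graph via Definition~\ref{algebra to graph} and then back via Definition~\ref{graph to algebra} returns $\frakn$ with the same basis. The vertex set recovered is $\{v_i\}$ and the color set is $\{z_j\}$, so the underlying vector space is unchanged. For the bracket: if $[v_i,v_j]=z_k$ in $\frakn$, then $(v_i,v_j)\in E$ with $c(v_i,v_j)=z_k$, so Equation~\eqref{bracket} gives $[v_i,v_j]=z_k$ again; if $[v_i,v_j]=-z_k$, then $(v_j,v_i)\in E$ with color $z_k$, and the second case of~\eqref{bracket} yields $-z_k$; if $[v_i,v_j]=0$ then neither ordered pair is an edge and~\eqref{bracket} gives $0$. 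By Property~\eqref{onek}(b) of Definition~\ref{def of uniform} these three cases are exhaustive, so the bracket is reproduced exactly. Conversely, starting from a uniformly colored directed graph $(V,E)$, forming $\frakn$ via Definition~\ref{graph to algebra}, and then reading off the graph via Definition~\ref{algebra to graph}: the vertices are $\{v_i\}=V$, and $(v_i,v_j)$ is declared an edge of the new graph precisely when $[v_i,v_j]=z_k$ for some $k$, i.e.\ (by~\eqref{bracket}) precisely when $(v_i,v_j)\in E$ — here I use that the graph is an oriented undirected graph, so $(v_j,v_i)\in E$ cannot also hold, ruling out the "$-c(v_i,v_j)$" branch from contributing a reversed edge. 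The recovered color of $(v_i,v_j)$ is $[v_i,v_j]=c(v_i,v_j)$, matching the original coloring. Hence the two correspondences are inverse.

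For the second assertion, I would appeal to Proposition~\ref{graph properties}, which already establishes that the graph built from a uniform Lie algebra of type $(p,q,r)$ with degree $s$ is a uniformly colored graph of the same type and degree. Combined with the inverse relationship just proved, if $G$ is a uniformly colored graph of type $(p,q,r)$ and degree $s$, and $\frakn$ is the Lie algebra of Definition~\ref{graph to algebra}, then the graph of $\frakn$ (by Definition~\ref{algebra to graph}) is $G$ itself, which has type $(p,q,r)$ and degree $s$; so it suffices to check that $\frakn$ is in fact uniform with respect to the basis $V\cup S$. This is a direct reading of the definitions: part~\eqref{onek} holds because $[z_j,z_k]=[v_i,z_j]=0$ by construction and $[v_i,v_j]\in\{0,\pm z_1,\dots,\pm z_p\}$ by~\eqref{bracket} (the color set being $\{z_j\}_{j=1}^p$); part~\eqref{rdistinct} holds because the coloring is proper (Property~\eqref{adjacent}), so two distinct edges at $v_i$ receive distinct colors, and an edge and its reverse cannot both occur, so $[v_i,v_j]=\pm[v_i,v_k]\neq 0$ forces the same colored edge at $v_i$, hence $v_j=v_k$; part~\eqref{r} is exactly Property~\eqref{same number} that each color class has $r$ edges, interpreted via~\eqref{bracket} as $r$ disjoint pairs $\{v_i,v_j\}$ with $[v_i,v_j]=z_l$; and part~\eqref{s} is the $s$-regularity of the underlying graph, since $[v_i,v_j]\neq 0$ exactly when $\{v_i,v_j\}$ is an edge, of which there are $s$ at each vertex. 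Thus $\calB = V\cup S$ is a uniform basis of type $(p,q,r)$ with degree $s$.

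I do not anticipate a serious obstacle here — every step is an unwinding of definitions already laid out in Section~\ref{graphs} and Section~\ref{definition-examples}. The one point requiring a little care, and the closest thing to the "main obstacle," is the bookkeeping around orientations when checking that the round trip graph $\to$ algebra $\to$ graph is the identity: one must use explicitly that a uniformly colored directed graph arises by orienting an undirected graph (the remark after Definition~\ref{def of graph coloring}), so that at most one of $(v_i,v_j),(v_j,v_i)$ lies in $E$, which is what makes the "$-c$" branch of~\eqref{bracket} never spuriously create a reversed edge in the recovered graph. With that observation in place, the correspondences are visibly inverse and the parameter claims follow, completing the proof.
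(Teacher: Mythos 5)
Your proposal is correct: the paper itself leaves this proof to the reader as a straightforward unwinding of Definitions \ref{algebra to graph}, \ref{graph to algebra} and \ref{def of uniform}, and your verification (including the careful point that at most one of $(v_i,v_j),(v_j,v_i)$ can be an edge, and that properness of the coloring yields both Property \eqref{rdistinct} and the disjointness of the $r$ pairs in Property \eqref{r}) is exactly that intended argument, consistent with the proved algebra-to-graph direction in Proposition \ref{graph properties}. No gaps.
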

The proof is straightforward, so we leave it to the reader. 

\begin{remark}\label{more general} 
  The correspondences in Definitions \ref{algebra to graph} and
\ref{graph to algebra} between edge-colored graphs and algebras are
defined more generally, even if not all of the properties of a
uniform Lie algebra or uniformly colored graph hold.  In the most
general situation, there is a correspondence between edge-weighted
directed graphs (possibly with loops or multiple edges) and
two-step nilpotent Lie algebras.  This is carefully described in
Remark 3.2 of \cite{ray-16}.
\end{remark}

\begin{remark}\label{Dani-Mainkar} 
The two-step nilpotent Lie algebras of {\em graph type} from
\cite{dani-mainkar-05} can be defined as follows.  Let $G= (V,E)$ be a
 graph on $q$ vertices without multiple edges or loops, where
$V = \{v_i\}_{i=1}^q.$ Let $\frakv$ be the real span of $V,$ so
$\frakv \cong \boldR^q.$ Let $\frakn = \frakv \oplus \frakz,$ where
$\frakz = \myspan \{ v_i \wedge v_j \, : \, \{v_i, v_j\} \in E \}
\subseteq \frakso(q),$ and define a Lie bracket for $\frakn$ so that
the only nonzero bracket relations of basis vectors are $[v_i, v_j] =
v_i \wedge v_j$ if the edge $\{v_i, v_j\} \in E.$    
The Lie algebras in Examples \ref{free} and \ref{cyclic algebra} are
of graph type.  

 A Lie algebra of graph type need not come from a regular graph, so
not all Lie algebras of graph type are uniform.  Lie algebras of graph
type correspond, in the sense of Proposition \ref{graph to algebra},
to colored graphs with each edge colored a different color, as in
Example \ref{trivial}; hence if they are uniform, the parameter $r$ is
equal to one as in Proposition \ref{r = 1}. Therefore  uniformly
colored graphs of type $(p,q,r)$ with $r \ne 1$ may fail to be of 
graph type.
\end{remark}

\begin{remark}\label{ideals} 
 An alternate point of view on the definitions of uniform Lie algebra
and Lie algebras of graph type is from the perspective of ideals.
Every two-step nilpotent Lie algebra with $n$ generators is the
quotient of $\frakf_{n,2},$ the free two-step nilpotent Lie algebra on
$n$ generators, by a central ideal.  Let $x_1, x_2, \ldots, x_n$
denote $n$ generators of $\frakf_{n,2}$ and let $\calB = \{ x_i
\}_{i=1}^{n} \cup \{ x_i \wedge x_j \}_{1 \le i < j \le n}$ be the
basis for $\frakf_{n,2}$ as in Example \ref{free}.  Lie algebras
of graph type are quotients of free two-step nilpotent Lie algebras 
 by monomially generated  ideals, i.e, ideals
spanned by subsets of $\{ x_i \wedge x_j \}_{1 \le i < j \le n},$ and
every monomially generated  ideal defines a Lie algebra of graph type.  In
contrast, akin to toric ideals, ideals defining uniform Lie algebras
are generated by elements of the form $x_i \wedge x_j,$ with $i \ne j,$
or $x_i \wedge x_j \pm x_k \wedge x_l,$ where $i,j,k,l$ are distinct.
\end{remark}

\subsection{Translations between 
algebraic properties and graph properties}
The $J_z$ maps defined in Section \ref{properties} completely encode
the algebraic structure of a metric nilpotent Lie algebra.  The {\em
skew-adjacency matrix} of a directed graph $G$ is defined to be the
matrix $A(G) = (a_{ij})$ with $a_{ij} = 1$ if $(v_i, v_j) \in E,$
$a_{ij} = -1$ if $(v_j, v_i) \in E,$ and $a_{ij} = 0$ otherwise.
Skew-adjacency matrices for uniformly colored graphs are closely
related to the $J_z$ maps for the corresponding uniform nilpotent Lie
algebra.
\begin{prop}\label{J}
Let $\frakn$ be a uniform metric Lie algebra with uniform basis
$\{v_i\}_{i=1}^q \cup \{z_j\}_{j=1}^p$ and let $G = (V,E)$ be the
associated uniformly colored graph with coloring function $c : E \to
[p].$ Then $ J_{z_j} = -A(H_j),$ where $H_j$ is the
subgraph consisting of the edges with color $j,$ so for $z =
\sum_{j=1}^p b_j z_j,$ the endomorphism $J_z$ is given by $J_z =
-J\left( \sum_{j=1}^p b_j z_j\right) = -\sum_{j=1}^p b_j \, A(H_j).$
\end{prop}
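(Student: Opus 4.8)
The plan is to verify the claimed identity $J_{z_j} = -A(H_j)$ by computing both sides on the basis vectors $v_i$ and matching them entry by entry. First I would recall from Proposition \ref{JZ} the explicit formula for $J_{z_l}(v_i)$: it equals $v_j$ when $[v_i,v_j] = z_l$, equals $-v_j$ when $[v_i,v_j] = -z_l$, and is zero otherwise. Next I would write out the $k$-th coordinate of $J_{z_j}(v_i)$, i.e. the matrix entry $(J_{z_j})_{ki}$ in the basis $\{v_i\}$: this is $1$ when $[v_i,v_k] = z_j$, it is $-1$ when $[v_i,v_k] = -z_j$, and it is $0$ otherwise. By the bracket convention of Definition \ref{graph to algebra} (Equation \eqref{bracket}), $[v_i,v_k] = z_j$ means $(v_i,v_k) \in E$ with $c(v_i,v_k) = j$, while $[v_i,v_k] = -z_j$ means $(v_k,v_i) \in E$ with $c(v_k,v_i) = j$; in both cases the relevant edge lies in $H_j$.

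I would then compare with the skew-adjacency matrix of $H_j$. By definition $A(H_j)_{ik} = 1$ if $(v_i,v_k)$ is an edge of $H_j$, equals $-1$ if $(v_k,v_i)$ is an edge of $H_j$, and is $0$ otherwise. Transposing (equivalently, using skew-symmetry $A(H_j)^T = -A(H_j)$), the $(k,i)$ entry $-A(H_j)_{ki} = A(H_j)_{ik}$ matches exactly the case analysis above: $(J_{z_j})_{ki} = 1 = A(H_j)_{ik}$ when $(v_i,v_k)$ is a $j$-colored edge, $(J_{z_j})_{ki} = -1 = A(H_j)_{ik}$ when $(v_k,v_i)$ is a $j$-colored edge, and both are zero otherwise. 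Hence $J_{z_j} = -A(H_j)$ as matrices in the basis $\{v_i\}_{i=1}^q$ of $\frakv$, where the minus sign arises precisely because $J_{z_j}$ is skew-adjoint with respect to $Q$ (so its matrix is skew-symmetric) while the convention for $A(H_j)$ records the oriented edge $(v_i,v_j)$ with a $+1$ in the $(i,j)$ slot.

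Finally, the statement for a general central element $z = \sum_{j=1}^p b_j z_j$ follows immediately from linearity of $z \mapsto J_z$: one has $J_z = \sum_{j=1}^p b_j J_{z_j} = -\sum_{j=1}^p b_j A(H_j)$. The only point requiring care — and the place where a sign error could creep in — is keeping straight the two index conventions: $J_{z_j}$ acts on $v_i$ and its matrix is read off column-wise, whereas $A(H_j)$ is defined directly from ordered pairs in $E$; reconciling these is just the observation that the skew-adjacency matrix of a subgraph of an oriented graph is skew-symmetric, so passing between $A(H_j)$ and its transpose absorbs the discrepancy. I expect this to be the main (though minor) obstacle; everything else is a direct unwinding of Definitions \ref{algebra to graph} and \ref{graph to algebra} together with Proposition \ref{JZ}.
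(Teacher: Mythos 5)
Your proposal is correct and follows essentially the same route as the paper, whose entire proof is the one-line instruction to apply Proposition \ref{JZ}; you simply carry out that unwinding explicitly, with the sign bookkeeping between the operator matrix $(J_{z_j})_{ki}$ and the skew-adjacency convention for $A(H_j)$ done correctly. Your reading of the final display as $J_z=\sum_j b_j J_{z_j}=-\sum_j b_j A(H_j)$ is also the intended one (the extra minus sign in the paper's displayed formula is evidently a typo).
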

To prove the proposition, use Proposition \ref{JZ}.

In geometry, it is of interest to 
understand when submanifolds of homogeneous
manifolds are totally geodesic.  Totally geodesic subalgebras of
metric nilpotent Lie algebras are tangent to totally geodesic
submanifolds of the corresponding nilmanifolds (See
\cite{eberleinnilgeom1, cairns-et-al-13}).  A subalgebra $\frakm$ of a
metric nilpotent Lie algebra is {\em totally geodesic} if it is
invariant under the $J_z$ map for all $z \in \frakm.$ Recall that the
uniformly colored graph $G = (V,E)$ corresponding to the Lie algebra
$\frakn$ with uniform basis $\calB =\{ v_i\}_{i=1}^q \cup
\{z_j\}_{j=1}^p$ has vertex set  $V = \{ v_i\}_{i=1}^q$ and edge 
colors in the set $\{z_j\}_{j=1}^p.$
\begin{prop}\label{totally geodesic}
Let $(\frakn, Q)$ be a uniform metric nilpotent Lie algebra with
uniform basis $\calB = \{ v_i\}_{i=1}^q \cup \{z_j\}_{j=1}^p.$ Let
$G = (V,E)$ be the corresponding uniformly colored graph. Let $V'
\subseteq V$ be a  subset of $V$   and let $S' $ be a
 subset of $\{z_j\}_{j=1}^p,$ where $V' \cup S' \ne \emptyset.$ 
Let $\frakm$ be the subspace of $\frakn$ spanned by $ V' \cup S'.$
\begin{enumerate}
\item{The subspace $\frakm$ is a subalgebra of $(\frakn, Q)$ if and
only if whenever $v_i$ and $v_j$ are in $V'$ and $(v_i, v_j) \in E,$
the color $c(v_i,v_j)$ is in $S'.$ }
\item{Suppose $S' \ne \emptyset.$
The subspace $\frakm$ is invariant under the $J_z$ map for all
$z \in \frakm$ if and only if for all $z_k \in S',$ every edge with
color $z_k$ has neither or both of its
vertices in $V'.$ }
\end{enumerate}
\end{prop}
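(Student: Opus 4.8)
The plan is to prove each of the two equivalences by unpacking the relevant definitions — the definition of a subalgebra in part (1), and the definition of totally geodesic (invariance under $J_z$) combined with Proposition \ref{JZ} in part (2) — and translating the resulting bracket conditions into statements about edges and colors via the correspondence of Definition \ref{graph to algebra}. Throughout, recall that $\frakz = \myspan\{z_j\}_{j=1}^p$ is central and $[\frakn,\frakn] = \frakz$, so any subspace containing some $z_j$'s and some $v_i$'s is automatically closed under bracketing the central part; the only nontrivial brackets are $[v_i,v_j]$ for $v_i, v_j \in V'$.

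For part (1), I would argue as follows. The subspace $\frakm = \myspan(V' \cup S')$ is a subalgebra iff $[\frakm,\frakm] \subseteq \frakm$. Since $[z, \cdot] = 0$ for $z \in \frakz$ and $V' \cup S' \ne \emptyset$ ensures $\frakm$ is nonempty, the condition reduces to $[v_i, v_j] \in \frakm$ for all $v_i, v_j \in V'$. Now by Definition \ref{graph to algebra}, $[v_i, v_j]$ is either $0$, or $\pm c(v_i,v_j) = \pm z_k$ for the unique color $z_k$ on the edge between $v_i$ and $v_j$ (when such an edge exists in $E$, in either orientation). If there is no edge, $[v_i,v_j] = 0 \in \frakm$ and there is nothing to check. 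If there is an edge, then $[v_i,v_j] = \pm z_k \in \frakm$ iff $z_k \in S'$ (since $S'$ is a subset of the linearly independent set $\{z_j\}_{j=1}^p$, membership of $z_k$ in $\myspan(V' \cup S')$ is equivalent to $z_k \in S'$). This is exactly the stated condition. Note the orientation is irrelevant here: the edge $(v_i,v_j)$ and the edge $(v_j,v_i)$ carry the same color and yield brackets differing only by sign.

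For part (2), assume $S' \ne \emptyset$ and suppose first that $\frakm$ is a subalgebra (the hypothesis of (2) implicitly sits inside the totally geodesic condition, but in fact I will not need it — $J_z$ invariance is a condition on $\frakm$ as a subspace). By linearity of $z \mapsto J_z$, the subspace $\frakm$ is $J_z$-invariant for all $z \in \frakm$ iff it is $J_{z_k}$-invariant for each $z_k \in S'$ (the $v_i$ components of $z$ contribute nothing since $J$ is only evaluated at central elements, and $\{z_k : z_k \in S'\}$ spans $\frakm \cap \frakz$). Fix $z_k \in S'$. By Proposition \ref{JZ}, $J_{z_k}$ acts on the basis $\{v_i\}$ by sending $v_i \mapsto \pm v_j$ when $[v_i, v_j] = \pm z_k$ (i.e. when $v_i v_j$ is the edge colored $z_k$ through $v_i$), and $v_i \mapsto 0$ otherwise; and $J_{z_k}$ kills all of $\frakz$. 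So $\frakm$ is $J_{z_k}$-invariant iff for every $v_i \in V'$, $J_{z_k}(v_i) \in \frakm$, which (since the image lies in $\frakv$) means $J_{z_k}(v_i) \in \myspan V'$. If $v_i$ is not a vertex of any edge colored $z_k$, then $J_{z_k}(v_i) = 0 \in \frakm$, no constraint. If $v_i$ is a vertex of the ($z_k$-colored) edge to $v_j$, then $J_{z_k}(v_i) = \pm v_j$, so we need $v_j \in V'$. Thus $J_{z_k}$-invariance is equivalent to: whenever $v_i \in V'$ lies on a $z_k$-colored edge, the other endpoint is also in $V'$ — equivalently, every $z_k$-colored edge has neither or both endpoints in $V'$. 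Quantifying over $z_k \in S'$ gives the stated criterion.

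**The main point to be careful about** — it is not really an obstacle, but it is where the argument could go wrong if rushed — is the reduction "$J_z$-invariance for all $z \in \frakm$ $\iff$ $J_{z_k}$-invariance for all $z_k \in S'$". One must note that for $z = \sum_i \alpha_i v_i + \sum_{z_k \in S'} \beta_k z_k \in \frakm$, the map $J_z$ depends only on the central component (indeed $J_z$ is defined only for $z \in \frakz$, and here we take the central part $\sum \beta_k z_k$), so only the $\beta_k$ with $z_k \in S'$ matter; then $z \mapsto J_z$ being linear in those components, invariance under all such $J_z$ is the same as invariance under each $J_{z_k}$ separately. The nonemptiness hypothesis $S' \ne \emptyset$ is what makes this content nonvacuous. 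I would also remark explicitly that the orientation chosen on the uniformly colored graph plays no role in either equivalence, since $J_{z_k}(v_i) = \pm v_j$ in both cases and "$v_j \in V'$" does not see the sign; likewise "$(v_i,v_j) \in E$" in the statement should be read as "$v_i$ and $v_j$ are joined by an edge", i.e. one of $(v_i,v_j)$, $(v_j,v_i)$ lies in $E$.
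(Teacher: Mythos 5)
Your argument is correct and follows essentially the same route as the paper: part (1) by translating the bracket condition through the edge--color correspondence, and part (2) by using Proposition \ref{JZ} to see that $J_{z_k}$ inverts the $z_k$-colored edges, so invariance amounts to every such edge having neither or both endpoints in $V'$. Your explicit justification of the reduction from all $z \in \frakm$ to the individual $z_k \in S'$ is a detail the paper's proof leaves implicit, but it does not change the approach.
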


\begin{proof}  
The subspace $\frakm$ is a subalgebra if and only if 
$[v_i,v_j] \in \myspan S'$ for all $v_i, v_j \in V'.$ But $[v_i,v_j] $
is nontrivial if and only if $(v_i, v_j) \in E$ or $(v_j, v_i) \in E,$ 
and in that case $[v_i,v_j] = \pm z_k,$ where $z_k$ is the color of
the edge between $v_i$ and $v_j.$ 

By Proposition \ref{J},  the $J_{z_k}$ map for color
$z_k$ inverts edges with color $z_k;$ i.e., if $(v_i, v_j) \in E$ and
$c(v_i,v_j) = z_k,$ then $J_{z_k}(v_i) = v_j$ and $J_{z_k}(v_j) =
-v_i,$ and
if there is no edge colored $z_k$ incident to vertex $v_i,$ then 
$J_{z_k}(v_i) =0.$  Hence, if $z_k \in S',$
$\frakm$ is $J_{z_k}$-invariant if and only if any $z_k$-colored edge
with one vertex in $V'$ has the other vertex in $V'.$
\end{proof}

The next example describes a type of totally geodesic subalgebra for
a uniform metric nilpotent Lie algebra $(\frakn, Q).$
\begin{example}  
Let $H$ be the union of connected components of $G$ and
let $V'=V_H$ be the set of vertices spanning $H.$ Let $S'=S_H$ be the
set of colors assigned to edges of $H$.  Then $V_H \cup S_H$ spans a
totally geodesic subalgebra of $(\frakn, Q).$
\end{example}

Recall that the basis for defining the Lie algebra $\frakn$
corresponding to a uniformly colored graph $G = (V,E)$ with colors
in $S$ is $\calB = V \cup S.$ Any color-permuting
automorphism $\phi$ of $G$ with permutation $\sigma$ of $S$
extends to a function $\hat \phi: \frakn
\to \frakn$ of the corresponding uniform Lie algebra $\frakn,$ by
defining  $\hat \phi(v) = \phi (v)$
for $v \in V$ and $\hat \phi (z) = \sigma (z)$ for $z \in S,$ and
extending $\phi$ from $\calB$ bilinearly.
We show that the function $\hat \phi$ is an automorphism of
$\frakn.$
\begin{prop}\label{symmetry}  
Let $\phi$ be a color-permuting automorphism of a uniformly colored
directed graph $G$ with corresponding uniformly colored Lie algebra
$\frakn.$ Then the map $\hat \phi : \frakn \to \frakn$ is an
automorphism of $\frakn.$  
Furthermore, the map $\phi \mapsto \hat \phi$ from  group of
color-permuting automorphisms of $G$ to the
automorphism group of $\frakn$
is injective.
\end{prop}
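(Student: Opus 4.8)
The plan is to verify directly that $\hat\phi$ preserves the Lie bracket on basis vectors, and then that distinct color-permuting automorphisms give distinct extensions. For the homomorphism property, note that $\hat\phi$ acts as $\phi$ on $\frakv = \myspan V$ and as $\sigma$ on $\frakz = \myspan S$, and that by Proposition \ref{graph to algebra prop} (and the explicit bracket formula \eqref{bracket}) the only brackets to check are $[v_i,v_j]$ for $v_i, v_j \in V$; the brackets $[v_i,z_j]$ and $[z_j,z_k]$ are zero and are sent to zero because $\hat\phi$ preserves the splitting $\frakn = \frakv \oplus \frakz$. So the heart of the matter is the identity $\hat\phi([v_i,v_j]) = [\hat\phi(v_i), \hat\phi(v_j)]$, i.e. $\sigma(c(v_i,v_j)) = c(\phi(v_i),\phi(v_j))$ with the correct sign. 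First I would observe that $(v_i,v_j) \in E$ if and only if $(\phi(v_i),\phi(v_j)) \in E$, since $\phi$ is an automorphism of the underlying directed graph; hence $[v_i,v_j] = 0$ exactly when $[\phi(v_i),\phi(v_j)] = 0$, and in that case both sides of the desired identity vanish. When $(v_i,v_j) \in E$, the defining property of a color-permuting automorphism, $c \circ \phi (v,w) = \sigma \circ c(v,w)$, gives $c(\phi(v_i),\phi(v_j)) = \sigma(c(v_i,v_j))$, so $[\phi(v_i),\phi(v_j)] = c(\phi(v_i),\phi(v_j)) = \sigma(c(v_i,v_j)) = \hat\phi(c(v_i,v_j)) = \hat\phi([v_i,v_j])$. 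The case $(v_j,v_i) \in E$ is identical up to an overall sign, which matches on both sides. Since $\hat\phi$ is $\boldR$-linear and agrees with a bracket homomorphism on the basis $\calB = V \cup S$, it is a Lie algebra homomorphism; and because $\hat\phi$ is built from the bijections $\phi$ on $V$ and $\sigma$ on $S$, it permutes the basis $\calB$ and is therefore a linear isomorphism, hence an automorphism of $\frakn$.

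For injectivity of $\phi \mapsto \hat\phi$, suppose $\hat\phi = \hat\psi$ for two color-permuting automorphisms $\phi,\psi$ with color permutations $\sigma,\tau$. Restricting to $\frakv$, which is spanned by the vertex set $V$ and on which $\hat\phi = \phi$, $\hat\psi = \psi$, we get $\phi = \psi$ as maps on $V$, hence as graph automorphisms. (One could also recover $\sigma = \tau$ by restricting to $\frakz$, but this is not needed for injectivity of $\phi \mapsto \hat\phi$, since the domain is the group of automorphisms $\phi$; the induced color permutation is determined by $\phi$ anyway on edges, up to its action on colors not appearing on any edge, which is irrelevant as $c$ is surjective.) Therefore $\phi \mapsto \hat\phi$ is injective.

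The main obstacle is really just bookkeeping: one must be careful that the ``color-permuting'' condition is stated for ordered pairs $(v,w) \in E$, so that the orientation is respected and the two cases $(v_i,v_j) \in E$ versus $(v_j,v_i) \in E$ in the bracket formula \eqref{bracket} are handled with the right signs; and one must check that $\hat\phi$ genuinely preserves the decomposition $\frakn = \frakv \oplus \frakz$ so that the trivial brackets among and between the $z_j$ cause no trouble. Neither point is deep, but getting the signs to line up in the $(v_j,v_i) \in E$ case is the one spot where a careless argument could go wrong. No step should be genuinely hard once the definitions of color-permuting automorphism and of the bracket \eqref{bracket} are placed side by side.
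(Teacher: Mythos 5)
Your proposal is correct and follows essentially the same route as the paper: a direct verification on the basis $\calB = V \cup S$ that $\hat\phi$ intertwines the bracket, using $c\circ\phi = \sigma\circ c$ on edges, with injectivity read off from the action on vertices. Your explicit handling of the $(v_j,v_i)\in E$ sign case and of the bijectivity of $\hat\phi$ only makes precise steps the paper leaves implicit.
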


\begin{proof} 
Let  $G = (V,E)$ be a uniformly colored graph
 with coloring function $c:E \to S, $ and denote the
vertices of $G$ by $V = \{v_i\}_{i=1}^q.$ Let $\phi: G \to G$ and
$\sigma: S \to S$ define a color-permuting automorphism of $G.$
Recall that $[v_i,v_j] \ne 0$ if and only if $(v_i,v_j) \in E$ or
$(v_j,v_i) \in E.$ Hence $[v_i,v_j] \ne 0$ if and only if $[\hat \phi
(v_i), \hat \phi (v_j)] \ne 0,$ and we need only to check that $[\hat
\phi (v_i) , \hat \phi (v_j)] = \hat \phi ([v_i, v_j])$ for $(v_i,
v_j) \in E.$ Assume that $(v_i, v_j) \in E.$ Then $(\phi(v_i),
\phi(v_j)) \in E.$ By definitions of the Lie bracket and $\hat \phi,$
\[ [\hat \phi (v_i) , \hat \phi (v_j)] = [\phi(v_i), \phi(v_j)] =
c((\phi(v_i), \phi(v_j))) = c( ( \phi (v_i,v_j) ),\] while
\[\hat \phi ([v_i, v_j]) = \hat \phi (c(v_i, v_j) ) = \sigma (c(v_i,
v_j) ).\] Thus, $[\hat \phi (v_i) , \hat \phi (v_j)] = \hat \phi
([v_i, v_j])$ as desired.  Clearly the map $\phi \mapsto \hat \phi$ is
injective.
\end{proof}

Recall that the disjoint union 
$G_1 + G_2= (V_1 \cup V_2, E_1 \cup E_2)$ of graphs $G_1 =
(V_1, E_1)$ and $G_2= (V_2, E_2)$ is the graph whose vertex
set is the union of the vertex sets of $G_1$ and $G_2,$ and
whose edge set is the union of the edge sets of $G_1$ and
$G_2.$ If $G_1$ and $G_2$ are edge-colored by coloring functions
$c_1 : E_1 \to S_1$ and $c_2 : E_2 \to S_2,$ then it is
natural to color $G_1 + G_2$ by defining  $c: E_1 \cup
E_2 \to S_1 \cup S_2$ by $c(e) = c_1(e)$ if $e \in E_1,$ and $c(e) =
c_2(e)$ if $e \in E_2.$

In the next proposition we show how the disjoint union of two
edge-colored directed graphs translates at the level of corresponding
uniform Lie algebras.  Actually, we do not need to assume that the
graphs or Lie algebras in the proposition are uniform; even if
regularity axioms involving $p, q,$ and $r$ are dropped, Definition
\ref{def of uniform} and Definition \ref{def of graph coloring} still
make sense, and Propositions \ref{algebra to graph} and \ref{graph to
algebra} still hold.  (See Remark \ref{more general}.)
\begin{prop}\label{union equals}
  Suppose that for $k=1,2,$ $G_k = (V_k,E_k)$ is a colored graph of
type $(p_k,q_k,r_k)$ with coloring function $c_k: E_k \to S_k. $ Let
$\frakn_1$ and $\frakn_2$ be the corresponding Lie algebras, and let
$\calB_1 = \{v_i\}_{i=1}^{q_1} \cup \{z_j\}_{j=1}^{p_1}$ and $\calB_2
= \{x_i\}_{i=1}^{q_2} \cup \{w_j\}_{j=1}^{p_2}$ be the corresponding
uniform bases of $\frakn_1$ and $\frakn_2$ respectively.  Let $\frakn$
be the Lie algebra associated to the colored graph $G_1 + G_2.$ Let
$\fraki$ be the ideal in $\frakn_1 \oplus \frakn_2$ defined by
\[ \fraki = \myspan \{ (z_i, - w_j) \, : z_i = w_j \}\] if $S_1 \cap
S_2 \ne \emptyset$ and let $\fraki = \{ (0,0)\}$ otherwise.  Then
\begin{equation}\label{cat def}
\frakn (G_1 + G_2) :=  (\frakn_1 \oplus
\frakn_2) / \fraki.\end{equation} 
In particular, if $S_1$ and
$S_2$ are disjoint, then $\frakn \cong \frakn_1 \oplus \frakn_2.$
\end{prop}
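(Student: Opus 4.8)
The plan is to unwind both sides of \eqref{cat def} on the level of bases and structure constants and check that the obvious linear map is a Lie algebra isomorphism. First I would set up notation: let $\frakn = \frakn(G_1+G_2)$ be the Lie algebra attached to $G_1 + G_2$ via Definition \ref{graph to algebra}, with basis $V_1 \cup V_2 \cup (S_1 \cup S_2)$, and let $\frakn_1 \oplus \frakn_2$ have the evident basis $\calB_1 \sqcup \calB_2$. Since $G_1 + G_2$ has vertex set $V_1 \cup V_2$ with $V_1$ and $V_2$ disjoint and \emph{no edges between} the two parts, the only nontrivial brackets of vertex basis vectors in $\frakn$ are those coming from edges of $E_1$ (landing in $\myspan S_1$) and edges of $E_2$ (landing in $\myspan S_2$); a vertex from $V_1$ brackets trivially with a vertex from $V_2$.

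Next I would define the candidate map $\Phi : \frakn_1 \oplus \frakn_2 \to \frakn$ on basis vectors by sending each $v_i \mapsto v_i$, each $x_i \mapsto x_i$, each $z_j \mapsto z_j$ (viewed as a color in $S_1 \cup S_2$), and each $w_j \mapsto w_j$ (likewise), extended linearly. The point of the identification $S_1 \cap S_2 \ne \emptyset$ is precisely that a color lying in both $S_1$ and $S_2$ is \emph{one} basis vector of $\frakn$, so $\Phi$ collapses the pair $z_i$ (from $\frakn_1$) and $w_j$ (from $\frakn_2$) whenever $z_i = w_j$ as elements of $S_1 \cup S_2$. I would then check directly that $\ker \Phi = \fraki$: the kernel is spanned by differences of basis vectors with the same image, and the only coincidences of images among basis vectors are exactly pairs $(z_i, -w_j)$ with $z_i = w_j$, giving $\ker \Phi = \myspan\{(z_i,-w_j) : z_i = w_j\} = \fraki$ (and $\fraki = \{(0,0)\}$ when $S_1 \cap S_2 = \emptyset$). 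Surjectivity is clear since every basis vector of $\frakn$ is hit. It remains to verify $\fraki$ is an ideal and $\Phi$ is a Lie homomorphism: both are immediate because $\fraki$ lies in the center of $\frakn_1 \oplus \frakn_2$ (it sits inside $\frakz_1 \oplus \frakz_2$, which is central by Theorem \ref{basic properties}\eqref{center}), and on brackets of vertex basis vectors $\Phi$ visibly matches the defining relations \eqref{bracket} for $G_1+G_2$ with those for $G_1$ and $G_2$ separately, using that cross-brackets $[v_i,x_j]$ vanish on both sides. Hence $\Phi$ descends to an isomorphism $(\frakn_1 \oplus \frakn_2)/\fraki \xrightarrow{\ \sim\ } \frakn$, which is the assertion of \eqref{cat def}; the final sentence follows by taking $\fraki = \{(0,0)\}$.

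The only mildly delicate point — and the one I would be most careful about — is bookkeeping with the shared color set: one must treat a color $z \in S_1 \cap S_2$ consistently as a single basis element of $\frakn$ while it corresponds to two distinct basis elements, one in $\frakn_1$ and one in $\frakn_2$, before passing to the quotient, and check that no \emph{vertex} basis vectors are ever identified (they are not, since $V_1 \cap V_2 = \emptyset$ in the disjoint union). Everything else is a routine unwinding of Definitions \ref{algebra to graph}, \ref{graph to algebra}, and \ref{graph to algebra prop} together with the two-step nilpotency that makes the Jacobi identity automatic, so I would state those verifications briefly rather than belabor them.
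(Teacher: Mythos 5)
Your argument is correct and follows essentially the same route as the paper, which only sketches the idea: identify the shared central colors by quotienting $\frakn_1 \oplus \frakn_2$ by the span of the differences $(z_i,-w_j)$. Your version simply makes this precise by exhibiting the surjective homomorphism $\Phi$, checking its kernel equals $\fraki$, and noting $\fraki$ is central, which is a sound (and slightly more complete) write-up of the paper's intended proof.
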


The idea of the proof is as follows.  The center of $\frakn_1 \oplus
\frakn_2$
is spanned by elements $(z_i,0)$ and $(0,w_j)$ with $z_i$ in the first
color set $S_1$
and $w_j$ in the second color set $S_2.$  To get $\frakn (G_1 + G_2)$,
for every color $z_i = w_j$
occurring in both $S_1$ and $S_2,$  we need to identify  $(z_i,0)$ and
$(0,w_j).$  Hence the ideal $\fraki$ must be spanned by $(z_i,-w_j),$
for all colors with $z_i = w_j.$

 In the case
that $S_1 \supseteq S_2,$ the Lie algebra $\frakn (G_1 + G_2)$
 in the proposition is defined by a process that Jablonski
calls {\em concatenation} of the ``structure matrices'' of $\frakn_1$
and $\frakn_2$ (Section 3, \cite{jablonski-moduli}).  In the case 
that $S_1 \supseteq S_2,$ the uniform basis for 
$\frakn (G_1 + G_2)$ may be
naturally identified with $\{ v_i\}_{i=1}^{q_1}\cup \{z_j\}_{j=1}^p.$

We would like to know when the  graph union of
uniformly colored graphs defines a uniformly colored graph.
In the next proposition we consider the case that the coloring sets
are disjoint.  

\begin{prop}\label{graph union disjoint color set} 
For $k=1$ and $2,$ let $G_k= (V_k, E_k)$ be a uniformly
colored graph of type $(p_k,q_k,r_k)$ with degree $s_k,$
and let $c_k: E_k \to S_k$ be the coloring function for
$(V_k, E_k).$ Assume that the graphs $G_1$ and $G_2$ are
disjoint, and the coloring sets $S_1$ and $S_2$ are
disjoint.  Define the coloring function $c : E_1 \cup E_2
\to S_1 \cup S_2$ by $c(e) = c_1(e)$ if $e \in E_1,$ and
$c(e) = c_2(e)$ if $e \in E_2.$ Then the following are
equivalent:
\begin{enumerate}
\item{$G_1 + G_2,$ endowed with the coloring function $c,$ is a
uniformly colored graph.}
\item{$s_1 = s_2$ and $r_1 = r_2.$}
\end{enumerate}
Furthermore, if $G_1 + G_2,$ endowed with the coloring function $c,$ is
uniformly colored, then  it is of type $(p_1 + p_2, q_1 + q_2, r),$ where
$r=r_1=r_2,$ and it has degree $s = s_1 = s_2.$
\end{prop}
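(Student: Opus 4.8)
The plan is to verify, directly and one by one, the two defining conditions of a uniform edge coloring from Definition \ref{def of graph coloring} for the combined graph $G_1 + G_2$ with coloring $c$, keeping careful track of which conditions hold automatically from the disjointness hypotheses and which ones are precisely what force the numerical equalities $s_1 = s_2$ and $r_1 = r_2$. Recall that a uniform edge coloring requires (i) that the underlying undirected graph be regular, (ii) that the coloring be proper, and (iii) that the coloring function be surjective with every color occurring the same number of times.

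First I would dispose of the conditions that are automatic. Since $V_1$ and $V_2$ are disjoint, no edge of $E_1$ is adjacent to an edge of $E_2$; hence any two adjacent edges of $G_1 + G_2$ lie entirely in $E_1$ or entirely in $E_2$, so properness of $c$ follows at once from properness of $c_1$ and of $c_2$. Likewise $c$ is surjective onto $S_1 \cup S_2$ because $c_1$ surjects onto $S_1$ and $c_2$ onto $S_2$. Thus the only remaining conditions that can fail are regularity of the underlying undirected graph and equality of the color multiplicities.

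Next I would analyze these two remaining conditions. In $G_1 + G_2$ the vertices of $V_1$ have degree $s_1$ and those of $V_2$ have degree $s_2$; since the parameters $p_k, q_k, r_k$ are positive, both $V_1$ and $V_2$ are nonempty, so the underlying undirected graph is regular if and only if $s_1 = s_2$, with common degree $s := s_1 = s_2$. For the color counts, a color $z \in S_1$ is used on exactly $r_1$ edges of $E_1$ and — because $S_1 \cap S_2 = \emptyset$ — on no edge of $E_2$, hence on exactly $r_1$ edges of $G_1 + G_2$; symmetrically each color in $S_2$ is used exactly $r_2$ times. As both color sets are nonempty, all colors in $S_1 \cup S_2$ occur the same number of times if and only if $r_1 = r_2$, with common value $r := r_1 = r_2$. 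Combining these observations, $G_1 + G_2$ endowed with $c$ satisfies Definition \ref{def of graph coloring} if and only if $s_1 = s_2$ and $r_1 = r_2$, which is exactly the claimed equivalence.

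Finally, assuming $s_1 = s_2$ and $r_1 = r_2$, I would read off the parameters: the number of colors is $|S_1 \cup S_2| = p_1 + p_2$ (disjoint union of color sets), the number of vertices is $|V_1| + |V_2| = q_1 + q_2$ (disjoint graphs), each color occurs $r = r_1 = r_2$ times, and the common degree is $s = s_1 = s_2$; hence $G_1 + G_2$ is of type $(p_1 + p_2, q_1 + q_2, r)$ with degree $s$. One may sanity-check this against Proposition \ref{constraints}, since $2r(p_1+p_2) = 2r_1 p_1 + 2r_2 p_2 = s_1 q_1 + s_2 q_2 = s(q_1+q_2)$. There is no genuine obstacle in this argument; the only point requiring a moment's care is the use of positivity of $p_k, q_k, r_k$ to guarantee that both summand graphs are nonempty, so that the two regularity degrees (respectively, the two color multiplicities) must actually coincide rather than one of them being vacuously irrelevant.
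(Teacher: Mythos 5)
Your proof is correct. The paper gives no argument for this proposition (it is stated with ``The proof is elementary. We leave it to the reader.''), and your direct verification of Definition \ref{def of graph coloring} for $G_1+G_2$ --- properness and surjectivity automatic from disjointness of the vertex and color sets, regularity forcing $s_1=s_2$, and equal color multiplicities forcing $r_1=r_2$ --- is exactly the intended elementary argument; your explicit remark that positivity of the parameters makes both summands (and both color sets) nonempty, so the two degrees and the two multiplicities genuinely must coincide, is the one point of care worth recording.
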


The proof is elementary.  We leave it to the reader.  



If we use Proposition \ref{union equals} to re-interpret
Proposition \ref{graph union disjoint color set} in terms of
algebras, we see how uniformly colored Lie algebras behave under
direct sums.
\begin{cor}\label{sum}
Let $\frakn_1$ be a uniform Lie algebra of type $(p_1, q_1, r_1),$
with uniform basis $\calB_1,$ and let $\frakn_2$ be a uniform Lie
algebra of type $(p_2, q_2, r_2),$ with uniform basis $\calB_2.$
Let $\frakn = \frakn_1 \oplus \frakn_2$ be the direct sum, and let
$\calB$ be the union of $i_1(\calB_1)$ and $i_2(\calB_2)$ where
$i_j : \frakn_j \to \frakn_1 \oplus \frakn_2,$ for $i=1, 2,$ are
the natural inclusion maps.  Then $\calB$ is a uniform basis of
type $(p,q,r)$ if and only if $r_1 = r_2$ and $s_1 = s_2.$ If
$\calB$ is a uniform basis, then $\frakn$ is uniform of type $(p_1
+ p_2, q_1 + q_2, r),$ where $r = r_1 = r_2.$
\end{cor}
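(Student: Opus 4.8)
The plan is to derive Corollary~\ref{sum} directly from the two preceding results, Proposition~\ref{union equals} and Proposition~\ref{graph union disjoint color set}, rather than working with Lie algebras from scratch. First I would recall that, by the correspondences in Definitions~\ref{algebra to graph} and~\ref{graph to algebra}, the uniform Lie algebra $\frakn_k$ with its uniform basis $\calB_k$ corresponds to a uniformly colored digraph $G_k = (V_k, E_k)$ of type $(p_k, q_k, r_k)$ with degree $s_k$, and we may assume (after relabeling the color sets, which only changes the Lie algebra by an isomorphism) that $S_1$ and $S_2$ are disjoint and that $G_1$ and $G_2$ are vertex-disjoint. Since $S_1 \cap S_2 = \emptyset$, the ideal $\fraki$ of Proposition~\ref{union equals} is trivial, so that proposition gives $\frakn(G_1 + G_2) \cong \frakn_1 \oplus \frakn_2 = \frakn$, with the uniform basis $\calB$ of $\frakn$ (the union of the images of $\calB_1$ and $\calB_2$ under the inclusions) corresponding under Definition~\ref{graph to algebra} precisely to the natural coloring $c$ on $G_1 + G_2$.

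Next I would observe that, under Proposition~\ref{graph to algebra prop}, $\calB$ is a uniform basis for $\frakn$ of some type exactly when $G_1 + G_2$ endowed with $c$ is a uniformly colored graph. Proposition~\ref{graph union disjoint color set} characterizes this: $G_1 + G_2$ is uniformly colored if and only if $s_1 = s_2$ and $r_1 = r_2$, and in that case it is of type $(p_1 + p_2, q_1 + q_2, r)$ with $r = r_1 = r_2$ and degree $s = s_1 = s_2$. Translating back through the correspondence, $\calB$ is a uniform basis of $\frakn = \frakn_1 \oplus \frakn_2$ if and only if $r_1 = r_2$ and $s_1 = s_2$, and then $\frakn$ is uniform of type $(p_1 + p_2, q_1 + q_2, r)$ with $r = r_1 = r_2$, which is exactly the assertion of the corollary.

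The only real subtlety, and the step I would be most careful about, is the reduction to the disjoint-color-set case: the statement of the corollary speaks of an arbitrary direct sum $\frakn_1 \oplus \frakn_2$ and the obvious basis $\calB$, without any hypothesis that the color parts of $\calB_1$ and $\calB_2$ are ``disjoint'' as abstract basis vectors. But in a direct sum the inclusions $i_1, i_2$ automatically send the center of $\frakn_1$ and the center of $\frakn_2$ into linearly independent subspaces of $\frakn_1 \oplus \frakn_2$, so the images $i_1(\{z_j\})$ and $i_2(\{w_j\})$ are genuinely distinct basis vectors; this is precisely the disjoint-color-set situation of Proposition~\ref{graph union disjoint color set}, and the $\fraki = \{(0,0)\}$ branch of Proposition~\ref{union equals}. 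Once this identification is spelled out, the corollary is immediate, so I would present the proof as a short paragraph invoking these two propositions together with the graph--algebra correspondence, and not belabor any computation.
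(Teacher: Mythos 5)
Your proposal is correct and follows essentially the same route as the paper, which presents Corollary~\ref{sum} precisely as the reinterpretation of Proposition~\ref{graph union disjoint color set} through Proposition~\ref{union equals} and the graph--algebra correspondence of Proposition~\ref{graph to algebra prop}. Your extra remark that the inclusions $i_1, i_2$ force the two color sets to be genuinely disjoint in the direct sum (so that the $\fraki = \{(0,0)\}$ case applies) is a worthwhile clarification, but it does not change the argument.
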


Next we look at the union of disjoint uniformly colored
graphs having same coloring set to see when we get a uniformly
colored graph.   We leave the proof to the reader.  
\begin{prop}\label{graph union same color set} 
For $k=1,2,$ let $G_k= (V_k, E_k)$ be a uniformly colored graph of
type $(p,q_k,r_k)$ with degree $s_k,$ and edge coloring function
 $c_k: E_k \to S.$ 
 Suppose that the graphs $G_1$ and $G_2$
are disjoint.  Define $c: E_1 \cup E_2 \to S$ by $c(e) = c_1(e)$ if $e
\in E_1,$ and $c(e) = c_2(e)$ if $e \in E_2.$ Then the following are
equivalent:
\begin{enumerate}
\item{$G_1 + G_2,$ endowed with the coloring function $c,$ is a
uniformly colored graph.}
\item{$s_1 = s_2.$ }
\end{enumerate}
Furthermore, if $G_1 + G_2$ endowed with the coloring function $c$
is uniformly colored, then it is of type $(p, q_1 + q_2, r_1+r_2),$
with degree $s = s_1 = s_2.$
\end{prop}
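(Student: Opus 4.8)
The plan is to verify the two axioms of Definition \ref{def of graph coloring} directly for the disjoint union, exploiting the fact that $G_1$ and $G_2$ are vertex-disjoint and are colored from the \emph{same} set $S$. First I would record that the underlying undirected graph of $G_1 + G_2$ has $q_1 + q_2$ vertices, and that a vertex lying in $V_k$ has exactly $s_k$ neighbors, all of which lie in $V_k$ because the union is disjoint. Hence $G_1 + G_2$ is regular if and only if $s_1 = s_2$; since regularity is built into the definition of a uniformly colored graph, this already gives the implication (1) $\Rightarrow$ (2), and it is also the key point for the reverse direction.

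Assuming $s_1 = s_2 =: s$, I would then check the remaining requirements. The color set of the union is $S$, which has cardinality $p$; surjectivity of $c$ is immediate because $c_1$ alone already maps onto $S$. Properness holds because two edges of $G_1 + G_2$ can be adjacent only when they belong to the same $E_k$ (vertex-disjointness rules out crossings), and each $c_k$ is proper. Finally, a fixed color $z \in S$ is used on exactly $r_1$ edges of $G_1$ and on exactly $r_2$ edges of $G_2$, hence on exactly $r_1 + r_2$ edges of $G_1 + G_2$, and this count does not depend on $z$; so Condition \eqref{same number} of Definition \ref{def of graph coloring} is satisfied with $r = r_1 + r_2$. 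Therefore $G_1 + G_2$ is uniformly colored of type $(p, q_1 + q_2, r_1 + r_2)$ with degree $s$, which proves (2) $\Rightarrow$ (1) and the final assertion simultaneously. As a sanity check one notes that the edge count $(r_1 + r_2)p = |E_1| + |E_2|$ is consistent with Proposition \ref{graph parameters}, and the relation $2rp = sq$ for the union is simply the sum of the relations $2 r_k p = s q_k$ for $k=1,2$.

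There is no genuine obstacle in this argument: it is a routine unwinding of the definitions. The only point that deserves a moment's attention is that adjacency in a disjoint union never crosses between the two components, which is exactly what lets properness and the vertex-degree computation decouple into the corresponding statements for $G_1$ and $G_2$. Accordingly I would present the argument in a few lines, or, as the statement of the proposition indicates, simply leave it to the reader.
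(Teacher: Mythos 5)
Your proof is correct: the paper leaves this proposition to the reader, and your routine verification of regularity plus the two conditions of Definition \ref{def of graph coloring} is precisely the intended argument. The key observations—that disjointness confines adjacency and vertex degrees to a single component (so uniformity forces $s_1=s_2$, properness is inherited, and each color is used exactly $r_1+r_2$ times)—are exactly what is needed.
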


Note that if $G$ is a uniformly colored graph, it is not true that 
all of its connected components must be uniformly colored. 


Now we translate Proposition \ref{graph union same color
set} from graphs to algebras.  This corollary overlaps with
 Proposition 3.4 of \cite{jablonski-moduli}, which says
that if $\frakn_1 = \frakv_1 \oplus \frakz$ and $\frakn_2 =
\frakv_2 \oplus \frakz$ have Einstein extensions, then so
does a concatenation of 
$\frakn_1$ and $\frakn_2.$ 
\begin{cor}\label{concatenations} 
Let $\frakn_1$ be a uniform Lie algebra of type $(p, q_1, r_1)$ of
degree $s$ with uniform basis $\calB_1 =\{ v_i\}_{i=1}^{q_1} \cup
\{z_j\}_{j=1}^p,$ and let $\frakn_2$ be a uniform Lie algebra of type
$(p, q_2, r_2)$ of degree $s$ with uniform basis $\calB_2 =\{
x_i\}_{i=1}^{q_1} \cup \{z_j\}_{j=1}^p.$
Let $\frakn$ be the concatenation of $\frakn_1$
and $\frakn_2$ as in Equation \eqref{cat def}.  Then $\frakn$ is
uniform of type $(p, q_1 + q_2, r_1 + r_2).$ 
\end{cor}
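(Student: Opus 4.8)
The plan is to transfer everything to graphs and read the conclusion off Proposition \ref{graph union same color set}. First I would pass from the Lie algebras to their associated uniformly colored graphs: let $G_1$ and $G_2$ be the graphs obtained from $\frakn_1$ and $\frakn_2$ (with their given uniform bases) via Definition \ref{algebra to graph}. By Proposition \ref{graph properties}, $G_1$ is uniformly colored of type $(p,q_1,r_1)$ with degree $s$ and $G_2$ is uniformly colored of type $(p,q_2,r_2)$ with degree $s$, and by construction both use the same color set $S = \{z_j\}_{j=1}^p$. Relabeling vertices if necessary, we may assume $G_1$ and $G_2$ have disjoint vertex sets; this does not affect the isomorphism type of either $\frakn_k$. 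Let $c : E_1 \cup E_2 \to S$ be the natural coloring of $G_1 + G_2$.

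Next I would identify the concatenation with the graph union. Since $S_1 = S_2 = S$, the ideal $\fraki$ of Proposition \ref{union equals} is $\myspan\{(z_j,-z_j)\}_{j=1}^p$, and that proposition gives $\frakn = (\frakn_1 \oplus \frakn_2)/\fraki = \frakn(G_1 + G_2)$, the Lie algebra associated in Definition \ref{graph to algebra} to $G_1 + G_2$ with the coloring $c$. Because $\frakn_1$ and $\frakn_2$ have the same degree $s$, Proposition \ref{graph union same color set} (the case $s_1 = s_2 = s$) applies and tells us that $G_1 + G_2$, endowed with $c$, is a uniformly colored graph of type $(p, q_1 + q_2, r_1 + r_2)$ with degree $s$. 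Finally, Proposition \ref{graph to algebra prop} says the Lie algebra of a uniformly colored graph of type $(p,q,r)$ with degree $s$ is uniform of that type and degree; applying this to $G_1 + G_2$ yields that $\frakn \cong \frakn(G_1 + G_2)$ is uniform of type $(p, q_1 + q_2, r_1 + r_2)$, with uniform basis the evident one obtained from $\calB_1 \cup \calB_2$ after identifying the two copies of each $z_j$.

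I do not expect a genuine obstacle here: the only substantive ingredient is that the abstract concatenation of Equation \eqref{cat def} agrees with the graph-union construction, and that is exactly Proposition \ref{union equals}, so nothing new is needed. The points requiring a moment's care are purely bookkeeping: that the common center $\myspan\{z_j\}_{j=1}^p$ still has dimension $p$ in the quotient (no color of $G_1$ is merged with a \emph{distinct} color of $G_2$, which holds because the color sets are literally equal), and that each $z_j$ acquires $r_1 + r_2$ disjoint realizing pairs — $r_1$ with both vertices in $V_1$ and $r_2$ with both vertices in $V_2$ — which is immediate from the disjointness of $V_1$ and $V_2$. All of this is already encapsulated in the cited propositions, so the proof is short.
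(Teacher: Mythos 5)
Your proposal is correct and follows essentially the same route as the paper, which presents Corollary \ref{concatenations} precisely as the translation of Proposition \ref{graph union same color set} to algebras via the identification of the concatenation in Equation \eqref{cat def} with the Lie algebra of the graph union $G_1+G_2$ (Proposition \ref{union equals}), followed by Proposition \ref{graph to algebra prop}. Your added bookkeeping remarks (no distinct colors get merged since $S_1=S_2$, and each $z_j$ picks up $r_1+r_2$ disjoint realizing pairs) are exactly the points the paper leaves implicit.
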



\section{Examples of uniform Lie algebras}\label{examples}
\subsection{Examples defined by
colorings of notable graphs}
Many well-known graphs from combinatorial, geometric or
number-theoretic constructions admit uniform edge colorings.  We give
just one example here.  Recall that for the Kneser graph $K_{n,m}=
(V,E),$ the vertex set $V$ is the set of subsets of $[n]$ having
cardinality $m.$ Two vertices (sets) are connected by an edge if and
only if they are disjoint.  The graph is regular with degree
$(\begin{smallmatrix} n -m \\ m \end{smallmatrix}).$ From the
 symmetry of the roles of the vertices, any permutation of the $n$ 
vertices is an automorphism.
\begin{prop}\label{kneser graph}
Let $K_{n,m}= (V,E)$ be a Kneser graph, where $m < n/2$.
Then $K_{n,m}$ admits a uniform edge coloring.  With this
coloring, $K_{n,m}$ is a uniformly colored graph of type
$(p,q,r) = \left( (\begin{smallmatrix} n \\
2m \end{smallmatrix}), (\begin{smallmatrix} n \\
m \end{smallmatrix}) , \smallfrac{1}{2}
(\begin{smallmatrix} 2m \\ m \end{smallmatrix}) \right).$
\end{prop}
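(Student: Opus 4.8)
The plan is to color each edge of $K_{n,m}$ by the union of its two endpoints. An edge joins two disjoint $m$-subsets $A,B\subseteq[n]$, and since $A\cap B=\emptyset$ with $|A|=|B|=m$, the set $A\cup B$ is a $2m$-subset of $[n]$. So I would take $S$ to be the collection of all $2m$-subsets of $[n]$, giving $|S|=\binom{n}{2m}$, and define $c(\{A,B\})=A\cup B$. First I would check this is a legitimate surjective edge coloring: it is well-defined by construction, and it is surjective because $m<n/2$ gives $2m<n$, so any $2m$-subset $T$ can be partitioned into two disjoint $m$-subsets, which then form an edge mapped to $T$. This accounts for the $p=\binom{n}{2m}$ colors claimed, while the vertex count $q=\binom{n}{m}$ is immediate from the definition of the Kneser graph, and the regularity of $K_{n,m}$ (with degree $s=\binom{n-m}{m}$) is already recorded above.

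Next I would verify properness. Suppose two edges share a vertex, say $\{A,B\}$ and $\{A,C\}$ with $B\neq C$, and that $c(\{A,B\})=c(\{A,C\})$, i.e.\ $A\cup B=A\cup C$. Since $A$ is disjoint from $B$ and from $C$, one recovers $B=(A\cup B)\setminus A=(A\cup C)\setminus A=C$, a contradiction. Hence adjacent edges receive distinct colors, so the coloring is proper.

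Then I would count, for a fixed $2m$-subset $T$, the edges colored $T$. Such an edge is an unordered pair $\{A,A'\}$ of disjoint $m$-subsets with $A\cup A'=T$; equivalently $A$ is an arbitrary $m$-subset of $T$ and $A'=T\setminus A$. There are $\binom{2m}{m}$ ordered choices $(A,A')$, and since $A\neq A'$ (they are disjoint and nonempty), each unordered pair is counted twice, giving exactly $\frac{1}{2}\binom{2m}{m}$ edges of color $T$. This number is the same for every color, and it is a positive integer because $\binom{2m}{m}=2\binom{2m-1}{m-1}$. By Definition \ref{def of graph coloring}, together with the regularity of $K_{n,m}$, this exhibits $K_{n,m}$ as a uniformly colored graph of type $\bigl(\binom{n}{2m},\binom{n}{m},\frac{1}{2}\binom{2m}{m}\bigr)$.

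I do not anticipate a real obstacle; the argument is combinatorial bookkeeping. The two points that deserve care are the properness step (using disjointness of the endpoints of a Kneser edge to cancel $A$) and the factor $\frac{1}{2}$ in the edge count. As a consistency check one can confirm the identity $2rp=sq$ from Proposition \ref{constraints}: with $s=\binom{n-m}{m}$ both sides equal $\frac{n!}{m!\,m!\,(n-2m)!}$.
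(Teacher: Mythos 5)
Your proof is correct and follows essentially the same route as the paper: the paper colors each edge by the complement $[n]\setminus(S_1\cup S_2)$ rather than the union $S_1\cup S_2$, which is just a relabeling of the color set by complementation, and its properness argument is the same observation that one endpoint together with the color determines the other endpoint. The only cosmetic difference is that you count the $\smallfrac{1}{2}\binom{2m}{m}$ edges of each color directly, while the paper appeals to the $S_n$-symmetry and extracts $r$ from the relation $2rp=sq$.
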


\begin{proof}
 First we define the coloring function $c.$ Two adjacent vertices
have set union of cardinality $2m;$ we color the edge between them
with the set $[n] \setminus (S_1 \cup S_2).$ Such a set is a subset
of $[n]$ of cardinality $(\begin{smallmatrix} n \\ n -
2m \end{smallmatrix}) = (\begin{smallmatrix} n \\
2m \end{smallmatrix}),$ so there are $p = (\begin{smallmatrix} n \\
2m \end{smallmatrix})$ colors.  Clearly, from the $S_n$ symmetry,
the coloring map is surjective.

This coloring has the property that if the edge $e$ is incident to
vertices $v$ and $w,$ then the subsets $v, w$ and $c(e)$ of $[n]$ form a
partition of $n.$  Hence, for each edge,  the edge's color and one of the vertices it
is incident to determine the other vertex incident to the edge.   
Hence, no vertex is incident to two edges with the same color, and
 the coloring is proper.  Each color occurs the same
number of times and each vertex has the same degree, again due to
the $S_n$ symmetry of the graph.

The degree $s$ of vertex $v$ is the number of subsets of $[n]$
disjoint from $s,$ which is 
$(\begin{smallmatrix}  n-m  \\  m  \end{smallmatrix}).$
We compute $r$ using $r = \frac{sq}{2p} = \smallfrac{1}{2} 
(\begin{smallmatrix}  2m  \\  m  \end{smallmatrix}).$
\end{proof}

As a special case of Proposition \ref{kneser graph}, we get a
uniform edge coloring of the Petersen graph $K_{5,2}.$ After 
assignments of orientation we may apply Proposition \ref{graph to
algebra prop} to define uniform Lie algebras.

\begin{example}  A uniform Lie algebra of type $(5,10,3)$ defined
by the uniform edge coloring of the Petersen graph $K_{5,2}$ has
basis $\calB = \{ v_{ij}\}_{1 \le i < j \le 5} \cup \{ z_k
\}_{k=1}^k$ and Lie brackets determined by relations of form
$[v_{ij}, v_{kl}] = \pm z_m$ for all $i,j,k,l,m$ with $i< j, k<l$ and
$\{i,j,k,l,m\}=[5].$
\end{example}

\subsection{Examples defined by Cayley graphs}
Let $G$ be a nontrivial group, and let $T \subseteq G$ be a
nonempty set of elements of $G$ all of order 2.  The set
$T$ need not be a generating set.  Let $\Cay(G,T) = (G,E)$
be the Cayley graph of $G$ relative to $T;$ it is a digraph
with vertex set $G$ and arc set $\{ (g,tg) \, : \, g \in
G, t \in T\}.$ Since the elements in $T$ all have order
two, for all $t \in T$ and all $g \in G,$ both $(g, tg)$
and $(tg, g)$ are edges. We identify such pairs with an
undirected edge $\{g, tg\}$ to view $\Cay(G,T)$ as an undirected
graph.  Because the identity of $G$ is not in $T,$
$\Cay(G,T)$ has no loops or multiple edges.  Define a
coloring function $c: E \to T$ so that if $g \in G$ and $t
\in T,$ the edge $\{g, tg\}$ is assigned the color $t.$
\begin{prop}\label{Cayley} 
Let $G$ be a nontrivial group of cardinality $q,$ and let
$T \subseteq G$ be a nonempty set of $p \ge 1$ elements of
$G,$ all of order 2.  Let $\Cay(G,T)$ be the undirected
Cayley graph of $G$ relative to $T.$ The function $c$
defined above defines a uniform edge coloring on
$\Cay(G,T)$ with respect to which $\Cay(G,T)$ is a
uniformly colored graph of type $(p,q,q/2)$ and degree $p.$
\end{prop}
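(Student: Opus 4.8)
The plan is to verify the two conditions of Definition \ref{def of graph coloring} directly from the structure of a Cayley graph, after recording the easy structural facts. First I would note that for a fixed vertex $g$, its neighbors are the $p$ distinct elements $tg$, $t \in T$ (distinct because $tg = t'g$ forces $t = t'$, and $\ne g$ because $e \notin T$), so $\Cay(G,T)$ is regular of degree $s = p$; the absence of loops and multiple edges is already noted in the setup. I would also check that $c$ is well defined on undirected edges: an edge $\{a,b\}$ belongs to $E$ iff $b = ta$ for some $t \in T$, and since each element of $T$ is an involution this is the same as $a = tb$; such $t$ is unique, so $c(\{a,b\})$ is unambiguous, and $c$ is surjective since $c(\{e,t\}) = t$ for every $t \in T$.

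Next I would check properness, condition \eqref{adjacent}. Two distinct edges incident to a common vertex $g$ have the form $\{g, t_1 g\}$ and $\{g, t_2 g\}$ with $t_1 \ne t_2$ (equality of these elements would make the edges coincide), and their colors are precisely $t_1$ and $t_2$; hence adjacent edges never share a color. Then I would count each color class, condition \eqref{same number}: fixing $t \in T$, the edges of color $t$ are the sets $\{g, tg\}$ for $g \in G$. Because $t$ has order two and $e \notin T$, left multiplication by $t$ is a fixed-point-free involution of $G$, so it partitions the $q$ vertices into $q/2$ orbits $\{g, tg\}$; distinct orbits give distinct edges and every $t$-colored edge comes from exactly one orbit, so exactly $r := q/2$ edges receive color $t$, independently of $t$ (in particular $q$ is even). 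Assembling these facts, $\Cay(G,T)$ with the coloring $c$ satisfies Definition \ref{def of graph coloring} with $p$ colors, $q$ vertices, $r = q/2$, and degree $s = p$, which is the assertion; as a sanity check this agrees with the relation $2rp = sq$ from Proposition \ref{constraints}.

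I do not anticipate a genuine obstacle here: every step is a direct unwinding of definitions. The one place demanding a moment's care is the color-class count — one must see that multiplication by a single fixed involution $t$ organizes the vertex set into $q/2$ disjoint pairs, so that the color class of $t$ is a perfect matching of $\Cay(G,T)$. The only other subtlety, also minor, is confirming that using involutions makes the coloring and the undirected edge structure mutually consistent, so that the two arcs $(g,tg)$ and $(tg,g)$ correspond to a single undirected edge carrying one well-defined color.
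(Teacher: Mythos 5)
Your proof is correct and follows essentially the same route as the paper's: fix a vertex and an involution $t$ to see the graph is $p$-regular with a proper, surjective coloring, then count the color class of each $t$ as the $q/2$ disjoint pairs $\{g,tg\}$. Your write-up just spells out the well-definedness of $c$ on undirected edges and the fixed-point-free involution argument more explicitly than the paper does.
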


\begin{proof} Clearly there are $q$ vertices.
Fix a vertex $g$ and $t \in T.$ Since $t$ has order two,
$tg \ne g,$ and there is an edge labelled $t$ between $t$
and $tg.$ That makes a total of $p$ edges incident to $g.$
Hence, $\Cay(G,T)$ is regular of degree $s =
p,$ and  the coloring map is surjective.  Because
$G$ is a group, there is exactly one edge colored $t$ that
is incident to $g.$ This gives a total of $r = q/2$ edges
labelled $t.$ 
\end{proof}

\begin{example}\label{Klein} 
If we let $G$ be the additive group $\boldZ_2 \times
\boldZ_2$ and let $T_1 = \{ (1,0), (0,1), (1,1)\},$ then
the resulting uniformly colored complete graph is the one
corresponding to the uniform Lie algebras in Examples
\ref{quaternionic} and \ref{quaternion associate}.  If we
instead take $T_2 = \{ (1,0), (0,1)\},$ then we get the
uniformly colored cycle graph associated to Example \ref{Damek
Ricci}.  Finally, the set $T_2 = \{ (1,0)\}$ gives the
graph $K_2 + K_2$ colored with one edge color; the
corresponding Lie algebra is the five-dimensional
Heisenberg algebra $\frakh_5$ as in Example
\ref{Heisenberg}.
\end{example}
We may let $G$ be any finite reflection group.
\begin{example}\label{S3}
If we let $G = S_3$ and $T = \{ (1 \, 2), (1 \, 3), (2 \,
3)\},$ then $\Cay(G,T)$ is the Thomsen graph and the
coloring described in Proposition \ref{Cayley} makes
$\Cay(G,T)$ into a uniform Lie algebra of type $(3,6,3).$
The corresponding nine-dimensional uniform Lie algebra is
of type $(3,6,3).$ Note that the Thomsen graph is
isomorphic to the complete bipartite graph $K_{3,3},$
and that elements of odd order are in one partite set  
of $\Cay(G,T)$  while 
even  elements are in the other.  

We leave it to the reader to confirm that, more generally,
if $G$ is the dihedral group of order $2p,$ by taking $T$
to be the set of all reflections, we get a uniform edge
coloring on the complete bipartite graph $K_{p,p}.$ The
corresponding $3p$-dimensional uniform Lie algebra is of
type $(p,2p,p).$  
\end{example}

\subsection{Examples defined by one-factorizations and
near-one-factorizations}\label{1-factorizations}

Let $G= (V,E)$ be a graph.  A {\em factor} of $G$ is a subgraph with
vertex set $V$ and edge set $E' \subseteq E.$ A {\em factorization} of
$G$ is a set of factors of $G$ so that the factors are pairwise
edge-disjoint and whose union is $G.$ A {\em one-factor} is a factor
which is a regular graph of degree one.  A {\em one-factorization} of
$G$ is a factorization for which each factor is a one-factor.  (If $G$ is
oriented, these definitions extend in the obvious way.)  Clearly, a
graph on an odd number of vertices can not have a one-factorization.
In this case, the closest thing to a one-factorization is a {\em
near-one-factorization,} defined as follows.  A set of edges which
covers all but one vertex in a graph is called a {\em
near-one-factor}.  A decomposition of a graph as the union of
near-one-factors is a {\em near-one-factorization.}  We may use 
Proposition \ref{decomposition} to define a uniform edge coloring 
of a graph admitting a one-factorization or near-one factorization.
Hence, every one-factorization or near-one-factorization of a graph
defines a class of associate uniform metric Lie algebras. 

\begin{prop}\label{1-factorization}  
Let $G$ be an  $s$-regular graph $G$ with $q$
vertices and $m$ edges.   
\begin{enumerate}
\item{If $q$ is even, every one-factorization of $G$ defines a
uniformly colored graph of type $(s, q, q/2),$ and $m= sq/2.$ }
\item{If $q$ is odd, every near-one-factorization of $G$ defines a
uniformly colored graph of type $(\smallfrac{sq}{q-1}, q,
\smallfrac{q-1}{2}),$ and $m= sq/2.$}
\end{enumerate}
\end{prop}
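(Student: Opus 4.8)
The plan is to reduce both parts to Proposition \ref{decomposition}, which already identifies uniform edge colorings of $s$-regular graphs on $q$ vertices with decompositions into copies of $K_2 + \cdots + K_2$, and then to read off the parameters by a handshake count. The one preliminary remark I would make is that an $s$-regular graph on $q$ vertices has $m = sq/2$ edges; this is the final assertion of each part, and it will also be used to determine the number of colors.

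For part (1), suppose $q$ is even and fix a one-factorization $F_1, \ldots, F_p$ of $G$. By definition each $F_k$ is a one-factor, that is, a spanning subgraph which is regular of degree one, hence a perfect matching on the $q$ vertices, and so is isomorphic to the disjoint sum of $r := q/2$ copies of $K_2$. Thus $\{F_1, \ldots, F_p\}$ is precisely a decomposition of the type appearing in Proposition \ref{decomposition}, so the coloring that assigns to each edge the index of the factor containing it is a uniform edge coloring; the resulting uniformly colored graph has $q$ vertices, underlying degree $s$, and each color occurs $r = q/2$ times. To pin down $p$, I would note that each $F_k$ has exactly $q/2$ edges and the $F_k$ partition $E$, whence $p \cdot (q/2) = m = sq/2$ and so $p = s$. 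This gives type $(s, q, q/2)$ with degree $s$, and $m = sq/2$, as claimed.

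For part (2), suppose $q$ is odd and fix a near-one-factorization $F_1, \ldots, F_p$. Each $F_k$ is a matching saturating all but one vertex, so it has $\tfrac{q-1}{2}$ edges and, viewed as a subgraph of $G$, is a disjoint union of $r := \tfrac{q-1}{2}$ copies of $K_2$ together with a single isolated vertex. Because that isolated vertex carries no edges, rather than quoting Proposition \ref{decomposition} verbatim I would check directly that the coloring $c$ sending each edge to the index of its factor satisfies Definition \ref{def of graph coloring}: it is proper since each $F_k$ is a matching, so no two edges sharing a vertex have a common color; it is surjective since each $F_k$ is nonempty (here $q \ge 3$); and each color occurs exactly $r = \tfrac{q-1}{2}$ times. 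Counting as before, $p \cdot \tfrac{q-1}{2} = m = \tfrac{sq}{2}$, so $p = \tfrac{sq}{q-1}$, giving type $\bigl(\tfrac{sq}{q-1}, q, \tfrac{q-1}{2}\bigr)$ with degree $s$ and $m = sq/2$.

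There is no real obstacle beyond bookkeeping. The only point that deserves a moment's care is exactly the one flagged above: in the odd case the individual factors are sums of $K_2$'s together with one isolated vertex rather than pure disjoint sums $K_2 + \cdots + K_2$, which is why I would verify the two defining conditions of a uniform edge coloring by hand there instead of invoking Proposition \ref{decomposition} directly. The integrality of $p = sq/(q-1)$ needs no separate argument, being forced by the existence of the near-one-factorization; one can also cross-check it against the relation $2rp = sq$ from Proposition \ref{constraints}.
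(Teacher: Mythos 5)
Your proof is correct and follows essentially the same route as the paper: invoke Proposition \ref{decomposition} (or, equivalently, verify Definition \ref{def of graph coloring} directly) and then determine $p$ by the edge count $2rp=sq$. Your extra care in the odd case about the isolated vertex in each near-one-factor is a reasonable refinement of the paper's argument, not a different approach.
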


\begin{proof}
Proposition \ref{decomposition} defines a uniform edge coloring for a
one-factorization or near-one factorization of a graph.  If $q$ is
even and $G$ has a one-factorization, each factor has $q$ vertices and
$q/2$ edges, so each color occurs $r=q/2$ times in such the
corresponding coloring.  Substituting into $2rp=sq$ gives $p=s.$ The
number of edges is $m = rp = sq/2.$ If $q$ is odd, and $G$ has a
near-one-factorization, each graph in the decomposition has $(q-1)/2$
edges, so each color occurs $r = (q-1)/2$ times in the associated
coloring.  Substituting into $2rp=sq$ gives $p=sq/(q-1).$ Then $m= rp=
sq/2.$
\end{proof}

Let $K_n$ denote the complete graph on $n$ vertices.   All complete graphs
admit a one-factorization or a near one-factorization, depending on
whether their order is even or odd. 
Up to equivalence $K_5$ has a unique near-one-factorization.  
This should be known, but we could not find a reference.  
See \cite{schroeder-thesis} for a detailed proof. Alternately, use the
fact that by adding a point at infinity, every near-one-factorization
of $K_5$ extends to a one-factorization of $K_6,$ and apply
Sylvester's argument for the uniqueness of one-factorizations of $K_6$
(up to isomorphism) to get a canonical presentation of 
near-one-factorizations of $K_5$  (see \cite{cameron-onefact}).
\begin{example}\label{K5 near-one-factorization}
Let $K_5= (V,E)$ be the complete graph on 5 vertices
 in  $\{v_i\}_{i=1}^5.$    Define the  edge coloring
$c : E \to [5]$ by  
\begin{align*}
c( \{e_2,e_5\})  &= c( \{e_3,e_4\}  )= 1 \\ 
c( \{e_4,e_5\} ) &= c( \{e_1,e_3\} ) = 2 \\ 
c( \{e_1,e_5\} ) &= c( \{e_2,e_4\}  )= 3 \\ 
c( \{e_1,e_2\} ) &= c( \{e_3,e_5\} ) = 4 \\ 
c( \{e_1,e_4\} ) &= c( \{e_2,e_3\} ) = 5.   
\end{align*}
Then $K_5$ together with the coloring $c$ is a uniformly colored
graph of type $(5,5,2)$ with degree $4.$
\end{example}

We will need the next lemma for the proof of Theorem
\ref{classification thm}.
\begin{lemma}\label{all isomorphic K5}
  Let $\frakn_1$ and $\frakn_2$ be two associate 
uniform Lie algebras defined by the uniformly colored graph
  in Example \ref{K5 near-one-factorization}. Then
$\frakn_1$ is isomorphic to $\frakn_2.$
\end{lemma}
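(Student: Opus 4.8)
The algebras in question are exactly the $2^{10}$ Lie algebras obtained by orienting the ten edges of the uniformly colored $K_5$ of Example~\ref{K5 near-one-factorization}: reversing an edge flips the sign of a structure constant, so these orientations produce precisely the uniform Lie algebras that are associate to one another with respect to the natural basis. Since $r = 2$ here, Proposition~\ref{r = 1} does not apply, and the plan is to proceed in two stages — first collapse the $2^{10}$ orientations to a very short list using the sign-change machinery, then kill the remaining ambiguity with one explicit isomorphism.

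For the first stage I would run the method of Remark~\ref{Y hat} (Theorem~B of \cite{payne-agag}), describing the orbits, on the set of sign patterns, of the group generated by the isomorphisms $v_i \mapsto -v_i$ and $z_j \mapsto -z_j$. Because each color sits on exactly two edges, after using the moves $z_j \mapsto -z_j$ the only surviving data is, for each color $z_k$, the relative sign $\sigma_k \in \boldZ_2$ of its two edges, so it is enough to understand how $v_i \mapsto -v_i$ acts on $(\sigma_1,\dots,\sigma_5)$. In the coloring of Example~\ref{K5 near-one-factorization} the color $z_k$ is precisely the near-one-factor missing vertex $v_k$; hence flipping $v_i$ flips exactly one edge of each color $z_k$ with $k \ne i$ and no edge of color $z_i$, i.e.\ it sends $(\sigma_1,\dots,\sigma_5)$ to $(\sigma_1,\dots,\sigma_5) + (\mathbf 1 - e_i)$. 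The vectors $\mathbf 1 - e_i$ generate the even-weight subspace of $(\boldZ_2)^5$, so there are at most two sign-change-orbits, represented by the algebra $\frakn_+$ of Example~\ref{K5 near-one-factorization} (all $\sigma_k = 1$) and by an algebra $\frakn_-$ with $\sigma_1 = -1$, $\sigma_2 = \dots = \sigma_5 = 1$ — an odd pattern, which no composition of sign changes reaches from $\frakn_+$.

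The second stage, producing a single isomorphism $\frakn_+ \cong \frakn_-$, is where the real work lies, and I expect it to be the main obstacle. One route uses symmetry: identifying the vertices of $K_5$ with $\boldZ_5$ so that the coloring becomes the standard cyclic near-one-factorization, the Frobenius group $\mathrm{AGL}(1,5)$ acts by color-permuting automorphisms (Proposition~\ref{symmetry}), while a vertex relabeling lying \emph{outside} this group carries the colored graph to an equivalent one — all near-one-factorizations of $K_5$ are equivalent, as recalled before Example~\ref{K5 near-one-factorization} — so following such a relabeling by a renormalizing string of sign changes yields an isomorphism between associates whose net effect on $(\sigma_1,\dots,\sigma_5)$ changes parity, connecting $\frakn_+$ to $\frakn_-$. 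The alternative route is direct: exhibit a non-monomial change of basis of $\frakv \oplus \frakz$, altering both the $v$-basis and the $z$-basis in the spirit of the alternate presentation in Example~\ref{quaternion associate}, that carries the bracket relations of $\frakn_-$ to those of $\frakn_+$; equivalently, find $g \in \mathrm{GL}(\frakv)$ whose induced action on $\Lambda^2\frakv$ takes the five-dimensional relation space of $\frakn_-$ onto that of $\frakn_+$ (i.e.\ check that both relation spaces lie in a common $\mathrm{GL}(\frakv)$-orbit in $\mathrm{Gr}(5,\Lambda^2\frakv)$). Once this one isomorphism is in hand, combining it with the first stage shows that every orientation of the colored $K_5$ yields a Lie algebra isomorphic to $\frakn_+$, which is the assertion of the lemma.
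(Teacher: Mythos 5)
Your first stage is fine and matches the paper's first step: the paper also invokes Theorem~B of \cite{payne-agag} (Remark \ref{Y hat}) to reduce the $2^{10}$ sign choices to exactly two representatives, one with all relative signs positive and one with a single sign reversed, and your orbit computation (the vectors $\mathbf 1 - e_i$ spanning the even-weight subspace of $\boldZ_2^5$) is a correct and even more explicit account of why there are at most two classes. The genuine gap is your second stage, which you yourself flag as ``the main obstacle'' but never carry out: you do not produce the isomorphism $\frakn_+\cong\frakn_-$, and without it the lemma is simply not proved. This step cannot be waved away, because in the exactly analogous $K_4$ situation the two sign classes give \emph{non-isomorphic} algebras (Examples \ref{quaternionic} and \ref{quaternion associate}); so the parity reduction alone is compatible with either outcome, and the whole content of the lemma sits in exhibiting the connecting isomorphism. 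The paper closes this by writing down an explicit signed permutation $\phi:\frakn_2\to\frakn_1$ (which one can check corresponds to the affine map $x\mapsto 3x+4$ of $\boldZ_5$ on vertices and colors, together with suitable sign flips) and verifying it intertwines the brackets.

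Moreover, your Route~A, as stated, does not work as reasoning. If $\tau\in S_5\setminus\mathrm{AGL}(1,5)$, then $\tau$ carries the given coloring to a \emph{different} (though equivalent) near-one-factorization; to return to the original coloring you must compose with a second relabeling $\rho$, and uniqueness of the near-one-factorization forces $\rho\tau\in\mathrm{AGL}(1,5)$. So the ``relabeling outside the group'' contributes nothing: what you are really using is the action of the color-permuting automorphism group itself on orientations, and the question is whether some element of $\mathrm{AGL}(1,5)$, followed by renormalizing sign flips, acts on the parity class $\boldZ_2^5/(\text{even-weight subspace})\cong\boldZ_2$ nontrivially. That action is affine, $\sigma\mapsto A\sigma+b$ with $A$ a color permutation (hence parity-preserving), so everything hinges on whether some automorphism produces an odd-weight translation vector $b$ --- a concrete computation you have not done, and which is exactly what the paper's explicit $\phi$ encodes. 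Route~B (a non-monomial change of basis, or a $\mathrm{GL}(\frakv)$-orbit argument on the relation spaces in $\Lambda^2\frakv$) is likewise only a plan. To repair the proof, either perform the parity computation for a specific automorphism such as $x\mapsto 3x+4$ and record the resulting signed-permutation isomorphism, or verify directly, as the paper does, that an explicit map of the form $\phi(x_i)=\pm v_{\tau(i)}$, $\phi(w_j)=\pm z_{\pi(j)}$ matches all ten bracket relations of the two representatives.
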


\begin{proof}
 One choice of orientation of the graph gives the Lie algebra $\frakn_1$
with
\[ [v_3,v_4] = [v_2, v_5] = z_1, [v_4,v_5] = [v_3, v_1] = z_2,
[v_5,v_1] = [v_4, v_2] = z_3, \]
\[[v_1,v_2] = [v_5, v_3] = z_4, [v_2,v_3] = [v_1, v_4] = z_5. \]
Theorem B of \cite{payne-agag} can be used to show that all uniform
Lie algebras with this set of nonzero structure constants are
isomorphic to $\frakn_1$ or the Lie algebra $\frakn_2$ with uniform
basis $\{x_i\}_{i=1}^5 \cup \{w_j\}_{j=1}^5$ and Lie bracket
\[ [x_3,x_4] = [x_2, x_5] = w_1, [x_4,x_5] = [x_3, x_1] = w_2,
[x_5,x_1] = [x_4, x_2] = w_3, \]
\[[x_1,x_2] = [x_5, x_3] = w_4, [x_2,x_3] = -[x_1, x_4] = w_5 .\] 
 (See Remark \ref{Y hat}.)   But
these two Lie algebras are isomorphic through the isomorphism 
$\phi: \frakn_2 \to \frakn_1$ defined by
\[ \phi(x_1) =  -v_2, \phi(x_2) = -v_5, \phi(x_3) = -v_3, \phi(x_4) =
-v_1, \phi(x_5) = v_4, \text{and}\] 
\[ \phi(w_1) = z_2, \phi(w_2)  =- z_5, \phi(w_3) = -z_3,
\phi(w_4) = z_1, \phi(w_5) = z_4.\] 
Hence all sign choices yield isomorphic Lie algebras.
\end{proof}

There is an extensive body of research on graph factorizations,
one-factorizations and near-one-factorizations. 
\begin{remark}\label{lots}
Graphs may admit inequivalent one-factorizations.  For $n \le 3,$
$K_{2n}$ has a single one-factorization up to isomorphism.  For all $n
\ge 4,$ $K_{2n}$ has non-isomorphic one-factorizations. 
 The number  $F(2n)$  of nonisomorphic
one-factorizations
of $K_{2n} $ grows like $\ln F(2n) \sim 2n^2 \ln (2n)$
(\cite{cameron-75}),
showing that  there is a profusion of uniformly
colored graphs in higher dimensions.  
\end{remark}
 See \cite{lovasz-plummer, wallis-97,
plummer-07, wallis-07} for results on classes of
graphs admitting one-factorizations, and counts of nonisomorphic
factorizations of $K_{2n}$ for small $n.$

\section{Classification of uniform Lie algebras of 
type $(p,q,r)$ with $q \le 5$}\label{classification}

In Theorem \ref{classification thm} of this section we classify up to
isomorphism all uniform Lie algebras of type $(p,q,r)$ with $q \le 5.$
We begin by establishing some general classification results which will be
useful in the proof of Theorem \ref{classification thm}.  First we
show that any uniform Lie algebra of type $(p,q,r)$ with $p=1$ must be
Heisenberg.
\begin{prop}\label{p = 1}
Let $\frakn$ be a uniform Lie algebra of type $(1, q, r).$
Then $q=2r$ and $\frakn$ is isomorphic to the
$(2r+1)$-dimensional Heisenberg algebra.
\end{prop}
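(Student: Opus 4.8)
The plan is to show that a uniform Lie algebra of type $(1,q,r)$ is forced, up to isomorphism, to be a Heisenberg algebra by exploiting the fact that there is only one central basis vector. First I would observe that since $p=1$, the relation $2rp = sq$ from Proposition \ref{constraints} becomes $2r = sq$, and the bound $2 \le 2r \le q$ forces $s=1$ and hence $q = 2r$. So the degree is $1$: every $v_i$ in the uniform basis has exactly one partner $v_j$ with $[v_i,v_j] \ne 0$, and that bracket must be $\pm z_1$ since there is only one color.

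Next I would use this to produce a pairing of the vertices. Define a relation on $\{v_i\}_{i=1}^q$ by $v_i \sim v_j$ when $[v_i,v_j]\ne 0$. Because $s=1$, each $v_i$ has a unique such partner, and by skew-symmetry the relation is symmetric; by Part \eqref{rdistinct} of Definition \ref{def of uniform} the partner is genuinely distinct from $v_i$. Hence this is a perfect matching on the $q$ vertices, giving $q = 2r$ disjoint pairs $\{v_{i_k}, v_{j_k}\}$, $k = 1,\dots,r$ (consistent with Part \eqref{r} of the definition, which says there are exactly $r$ disjoint pairs whose bracket is $z_1$). For each pair, after possibly swapping the two vectors we may assume $[v_{i_k}, v_{j_k}] = z_1$.

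With the matching in hand, relabel the basis: set $x_k = v_{i_k}$, $y_k = v_{j_k}$ for $k=1,\dots,r$, and $z = z_1$. Then the only nonzero brackets among basis vectors are $[x_k, y_k] = z$ for $k = 1,\dots,r$, with all other brackets of basis vectors vanishing (brackets between vectors in different pairs vanish because of the matching, brackets with $z$ vanish by Part \eqref{onek}(a)). This is exactly the presentation of the $(2r+1)$-dimensional Heisenberg algebra $\frakh_{2r+1}$ given in Example \ref{Heisenberg}, so $\frakn \cong \frakh_{2r+1}$, and $\dim \frakn = q + p = 2r+1$.

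I do not expect a serious obstacle here; the one point requiring a little care is justifying that the brackets between vectors belonging to two different matched pairs are zero — but this is immediate, since if $[x_k, x_\ell] \ne 0$ for $k \ne \ell$ then $x_k$ would have two distinct nonzero-bracket partners ($y_k$ and $x_\ell$), contradicting $s=1$. So the only real content is the bookkeeping that turns the degree condition plus the single-color condition into a perfect matching, after which the identification with $\frakh_{2r+1}$ is a relabeling.
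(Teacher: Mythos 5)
Your proof is correct, but it takes a different route from the one the paper actually relies on. The paper dispatches Proposition \ref{p = 1} with a one-line remark: a two-step nilpotent Lie algebra with one-dimensional center is a Heisenberg algebra plus an abelian factor, and uniform Lie algebras have no abelian factor because, by Part \eqref{center} of Theorem \ref{basic properties}, the center equals $[\frakn,\frakn]$. You instead carry out the alternative the paper only names in passing (``using the definition''): from $2rp=sq$ with $p=1$ you force $s=1$ and $q=2r$ (you could get $s=1$ even faster from $s\le p=1$ in Proposition \ref{constraints}), the degree-one condition turns the nonzero brackets into a perfect matching of the $v_i$, and after swapping within pairs to normalize signs the bracket table is literally that of $\frakh_{2r+1}$ in Example \ref{Heisenberg}. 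What your approach buys is an explicit isomorphism at the level of the uniform basis, with no appeal to an external structure theorem for two-step algebras with one-dimensional center; what the paper's approach buys is brevity and reuse of already-proved machinery. Two small cosmetic points: the matching consists of $r$ (not $q=2r$) disjoint pairs, as your own indexing $k=1,\dots,r$ shows; and the partner of $v_i$ is distinct from $v_i$ simply because $[v_i,v_i]=0$ by skew-symmetry, so the appeal to Part \eqref{rdistinct} there is unnecessary (that part is what guarantees the partner is unique given $s=1$ colors counted with sign, but uniqueness you already get from Part \eqref{s}).
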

 The proposition may be proved by using the definition of
uniform Lie algebra, or by observing that any two-step
nilpotent Lie algebra with a one-dimensional center must be
the direct sum of a Heisenberg algebra and an abelian factor.
However, by Part \eqref{center} of Proposition \ref{basic
properties}, uniform Lie algebras do not have abelian
factors.  

Nikolayevsky has classified the two-step Einstein nilradicals with
two-dimensional center (\cite{nikolayevsky-2step}).  See also
\cite{lauret-oscari} in which nonsingular two-step Einstein nilradicals
with two-dimensional center are classified.  It is not hard to describe 
uniform   Lie algebra of type $(2, q, r)$ arising from connected
graphs.  

\begin{prop}\label{p = 2}
Let $\frakn$ be a uniform Lie algebra of type $(2, q, r).$
Suppose that the corresponding uniformly colored graph is
connected.  Then $q=2r,$ where $r \ge 2,$ $s=2,$ and
$\frakn$ and isomorphic to one of the Lie algebras $\frakn(2r+2)$
and  $\frakn'(2r+2)$ defined in Example \ref{Damek Ricci gen}.  
\end{prop}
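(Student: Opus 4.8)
The plan is to analyze the uniformly colored graph $G$ directly, exploiting the fact that with $p=2$ there are only two colors, say $z_1$ and $z_2$. First I would observe that by Proposition \ref{totally geodesic}(1), or more simply by the definition, each color class $H_1, H_2$ is a perfect matching on the vertex subset it covers, and by Proposition \ref{decomposition} together with connectedness, every vertex must meet at least one edge of each color (otherwise a component using only one color would split off, contradicting that the graph is regular of degree $s$ with $s \geq 2$ forced — indeed a vertex meeting only color-$1$ edges has degree $1$, so $s=1$, but then $p \geq s$ could still allow $p=2$; so I need to rule this out). The cleaner route: since $G$ is connected and each $H_k$ is a disjoint union of $K_2$'s, and the two matchings together give a regular graph of degree $s \leq p = 2$, we get $s \in \{1,2\}$; if $s=1$ the graph is a single $K_2$, which is disconnected from the rest unless $q=2$, but then only one color appears, contradicting $p=2$. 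Hence $s=2$, so $G$ is $2$-regular and connected, i.e., $G$ is a single cycle $C_q$, and $q = 2r$ since $r = q/s \cdot s/2 = q/2$ from $2rp = sq$ with $p=s=2$.

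Next I would use that $G = C_q$ is properly $2$-edge-colored with colors alternating $z_1, z_2, z_1, z_2, \ldots$ around the cycle (proper coloring of a cycle with two colors forces strict alternation, which also forces $q$ even — consistent with $q = 2r$), and that $r \geq 2$ because $r = q/2$ and $q \geq 3$ for a cycle, with $q$ even giving $q \geq 4$. Choosing an orientation amounts to choosing, for each edge, a sign; so the Lie algebra is determined up to the associate relation by the cyclic alternating pattern. I would then write down the bracket relations: labelling the cycle vertices $v_1, \ldots, v_{2r}$ in cyclic order with $[v_{2i-1}, v_{2i}] = \pm z_1$ and $[v_{2i}, v_{2i+1}] = \pm z_2$ (indices mod $2r$), and compare with the defining relations of $\frakn(2r+2)$ and $\frakn'(2r+2)$ in Example \ref{Damek Ricci gen}, which are exactly of this alternating-cycle form with one distinguished edge $[v_1, v_{2r}]$ or $[v_{2r}, v_1]$ carrying color $z_2$.

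The remaining work is to show that every choice of orientation (equivalently, every associate relative to the cyclic uniform basis) yields a Lie algebra isomorphic to either $\frakn(2r+2)$ or $\frakn'(2r+2)$. Here I would invoke Proposition \ref{r = 1}-style reasoning is \emph{not} available since $r \geq 2$; instead I would use the sign-change technique of Remark \ref{Y hat} / Theorem B of \cite{payne-agag}: changes of basis sending some $z_k \mapsto -z_k$ or $v_i \mapsto -v_i$ act on the tuple of signs, and I would argue that the orbit structure has exactly two orbits, distinguished by an invariant such as the product of all signs around the cycle (this product is unchanged by flipping a single $v_i$, which flips two adjacent edge-signs, and by flipping $z_1$ or $z_2$, which flips $r$ signs — so parity considerations using $r$ determine the number of orbits). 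The two orbits correspond to $\frakn(2r+2)$ and $\frakn'(2r+2)$. \textbf{The main obstacle} I anticipate is this last orbit-counting step: I must carefully set up the $\boldZ_2$-action, identify the correct sign invariant (likely the product of edge signs, possibly combined with the parity of $r$), and verify that $\frakn(2r+2)$ and $\frakn'(2r+2)$ really do land in distinct orbits — i.e., that they are genuinely non-isomorphic, which the excerpt asserts for the family in Example \ref{Damek Ricci gen} but whose proof I would need to either cite or supply via a $J_z$-spectrum or derivation-algebra invariant.
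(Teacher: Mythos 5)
Your proposal is correct and takes essentially the same route as the paper: the paper likewise reduces to the fact that the graph must be an even cycle with edges alternately colored in the two colors (phrased there via a universal-cover argument, where you argue directly that a connected $2$-regular properly $2$-edge-colored graph is an alternating even cycle of length $q=2r\ge 4$), and then disposes of the sign choices by citing Theorem B of \cite{payne-agag}, which is exactly the orbit analysis you sketch. One remark: the ``main obstacle'' you anticipate is not actually needed for this statement, since the proposition only asserts isomorphism to \emph{one of} $\frakn(2r+2)$ and $\frakn'(2r+2)$; your own observation that the adjacent-edge sign flips coming from $v_i\mapsto -v_i$ span the full even-weight subspace already shows every sign pattern is equivalent to the all-plus pattern or to the pattern with a single minus, i.e.\ to $\frakn(2r+2)$ or $\frakn'(2r+2)$, so no non-isomorphism proof (nor any finer orbit count, which in fact depends on the parity of $r$) is required.
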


\begin{proof}  
Let $(V,E)$ be the uniformly colored graph as in the statement of the
proposition.  The universal cover of this graph is the infinite line graph
$T_2$ uniformly colored with two colors.  The only finite uniformly
colored quotients are even cycle graphs with edges colored alternately
in two colors.

Theorem B of \cite{payne-agag} (see Remark \ref{Y hat}) can be used to
show that all choices of signs for the structure constants yield
either $\frakn(2r+2)$  or  $\frakn'(2r+2).$
\end{proof}

Now we reach our main classification theorem.
\begin{theorem}\label{classification thm}   
Suppose that $\frakn$ is a uniform Lie algebra of type $(p,q,r).$
If $q \le 5,$ then up to Lie algebra isomorphism, 
$\frakn$ occurs exactly once in Table \ref{list
of algebras}.
\end{theorem}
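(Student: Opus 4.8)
The plan is to carry out an exhaustive case analysis organized by the parameters $q \in \{2,3,4,5\}$ and then, within each value of $q$, by the admissible values of $p$ and $r$. First I would invoke Proposition \ref{constraints} to sharply limit the parameter triples $(p,q,r)$ that can occur: the constraints $2rp = sq$, $s \le p \le rp \le \binom{q}{2}$, and $2 \le 2r \le q$ leave only a short finite list of candidate types for each $q \le 5$. For instance, $q \le 3$ forces $r = 1$; $q = 4$ allows $r \in \{1,2\}$; and $q = 5$ allows only $r = 1$ or $r = 2$ (since $2r \le 5$). Running through the divisibility relation $sq = 2rp$ with $s \le q-1$ then pins down finitely many $(p,q,r,s)$ tuples.

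Next, for each surviving type I would pass to the uniformly colored graph side via Proposition \ref{graph to algebra prop}, and enumerate the possible uniformly colored $s$-regular graphs on $q$ vertices up to color-permuting isomorphism. Here I would lean on the structural results already established: Proposition \ref{p = 1} disposes of all $p = 1$ types (these are Heisenberg), Proposition \ref{p = 2} handles the connected $p = 2$ cases (giving $\frakn(2r+2)$ and $\frakn'(2r+2)$), and Corollary \ref{sum}/Corollary \ref{concatenations} together with the remark that uniform Lie algebras have no abelian factors (Part \eqref{center} of Theorem \ref{basic properties}) let me reduce disconnected graphs to unions of smaller uniform pieces with matching $r$ and $s$. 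For each small regular graph on $\le 5$ vertices — paths are excluded since the graph must be regular, so the relevant graphs are unions of cycles, the complete graph $K_4$, $K_5$, $K_{3,3}$ restricted appropriately, etc. — I would list its $K_2 + \cdots + K_2$-decompositions (equivalently, its uniform edge colorings) using Proposition \ref{decomposition}.

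Then comes the step of collapsing associates: a given uniformly colored graph with an orientation yields a whole family of associate Lie algebras (one per sign pattern), and I must identify which of these are actually isomorphic. For $r = 1$ this is free by Proposition \ref{r = 1} (all associates coincide up to isomorphism). For the genuinely cubic cases with $r \ge 2$ — the $(2,4,2)$ algebras, the $(3,4,2)$ algebras of Examples \ref{quaternionic} and \ref{quaternion associate}, and the $(5,5,2)$ algebra on $K_5$ — I would apply the $\boldZ_2^n$-orbit method of Theorem B of \cite{payne-agag} (see Remark \ref{Y hat}), exactly as in Lemma \ref{all isomorphic K5}, to decide isomorphism among sign variants. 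The two non-isomorphic associates in the $(3,4,2)$ case (Heisenberg type versus not) must be distinguished by an isomorphism invariant, e.g.\ the Heisenberg-type condition, as indicated in Example \ref{quaternion associate}.

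The main obstacle I anticipate is the $q = 5$ analysis, and within it the $4$-regular case (the complete graph $K_5$): one must show $K_5$ has a unique near-one-factorization up to equivalence (cited from \cite{schroeder-thesis} or via the $K_6$ one-factorization argument of \cite{cameron-onefact}), and then that all sign choices on the resulting colored graph yield a single isomorphism class — this is precisely Lemma \ref{all isomorphic K5}. The bookkeeping of lower-degree $q = 5$ cases ($s = 2$, giving the $5$-cycle $\frakm(5)$ of type $(5,5,1)$, and mixed connected/disconnected configurations) is routine but must be done carefully to be sure nothing is missed; cross-checking the final count against the 12 entries of Table \ref{list of algebras} provides the completeness verification.
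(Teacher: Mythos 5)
Your proposal follows essentially the same route as the paper's proof: enumerate the regular graphs on at most five vertices, use the constraints of Proposition \ref{constraints} to pin down the admissible values of $p$, dispose of the $r=1$ cases via Proposition \ref{r = 1}, invoke the uniqueness of the one-factorization of $K_4$ and the near-one-factorization of $K_5$, and settle the sign choices via Theorem B of \cite{payne-agag} together with Lemma \ref{all isomorphic K5}, distinguishing the two $(3,4,2)$ associates by the Heisenberg-type condition and the two $(p,q)=(2,4)$ algebras by irreducibility. Apart from minor slips that do not affect the argument (e.g.\ $K_{3,3}$ has six vertices so cannot occur for $q\le 5$, and connected components of a uniformly colored graph need not themselves be uniformly colored, which is harmless here since $K_2+K_2$ is the only disconnected regular graph that arises), the plan is correct and matches the paper's proof.
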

\begin{table}
\caption{Uniform Lie algebras of type $(p,q,r)$ with $q \le
  5$}
\label{list of algebras}
\setlength\extrarowheight{2pt}
\begin{center}
\begin{tabular}{|c|c|c|c|}
\hline
Case & $(p,q,r) $ & Defining bracket relations &  Description    \\
\hline
\hline
1 & $(1,2,1)$  &   $[v_1,v_2] = z_1$  &  $\frakh_{3}$ \\
\hline
2 & $(1,4,2)$ &$[v_1,v_2] =     [v_3,v_4] =  z_1$ &   $\frakh_5$ \\
\hline
2  & $(2,4,1)$ &  $[v_1,v_2] = z_1,[v_3, v_4] = z_2$ & $\frakh_3
                                                         \oplus
                                                         \frakh_3 \cong$ \\
4  & $(2,4,2)$  &  $[v_1,v_3] = [v_2,v_4] = z_1, [v_1,v_4] =  [v_2,
                  v_3] = z_2$ & Example \ref{r not invariant} \\
\hline
3 & $(3,3,1)$  & $[v_i,v_j] = z_{ij}, 1 \le i < j \le 3 $& $\frakf_{3,2}$ \\
\hline
4 &  $(4,4,1)$ & $[v_1,v_2] = z_1, [v_2,v_3] = z_2, [v_3,v_4] = z_3, [v_4,v_1] = z_4$ & Example \ref{cyclic algebra} \\
\hline
4 & $(2,4,2)$ &  $[v_1,v_2] = [v_3,v_4] = z_1, [v_2,v_3] =  [v_4, v_1] = z_2$ & Example \ref{Damek Ricci} \\
\hline
\multirow{2}{2mm}{5}  & \multirow{2}{13mm}{$(5,5,1)$} & $[v_1,v_2] = z_1,  [v_2,v_3] = z_2, [v_3,v_4] = z_3,$ & 
\multirow{2}{22mm}{Example \ref{cyclic algebra}}  \\
& & $[v_4,v_5]= z_4,
[v_5,v_1] = z_5$ &\\
\hline
\multirow{2}{2mm}{6} & \multirow{2}{13mm}{$(3,4,2)$} & $[v_1,v_2] =
                                                       [v_3,v_4] =
                                                       z_1,$ & \multirow{2}{22mm}{Example
  \ref{quaternionic}}\\
&  & $[v_1,v_3] = -[v_2,v_4]= z_2, [v_1,v_4] = [v_2, v_3] =     z_3$  & \\
\hline
\multirow{2}{2mm}{6} & \multirow{2}{16mm}{$(3,4,2)$} & $[v_1,v_2] = [v_3,v_4] = z_1,$ &  \multirow{2}{22mm}{Example
  \ref{quaternion associate}}  \\
& & $[v_1,v_3] = [v_2,v_4]= z_2, [v_1,v_4] = -[v_2, v_3] =   z_3$  &
                                                                     \\
\hline
6  & $(6,4,1)$  & $[v_i,v_j] = z_{ij}, 1 \le i < j \le 4 $ &  $\frakf_{4,2}$ \\
\hline
\multirow{3}{2mm}{7}&  & 
$[v_1,v_5] = [v_2,v_4] =  z_1,  [v_2,v_5] = [v_3,v_4] = z_2$ &   
  \\
&  $(5,5,2)$ & $ [v_1,v_3]
= [v_4,v_5] = z_3, [v_3,v_5]= [v_1,v_2] = z_4,$ & Example \ref{K5
                                                  near-one-factorization}
  \\
& &  $[v_1,v_4] =  [v_2, v_3]
= z_5$  & \\
\hline
 7 &$(10,5,1)$  &
$[v_i,v_j] = z_{ij}, 1 \le i < j \le 5 $ & $\frakf_{5,2}$  \\
\hline
\end{tabular}
\end{center}
\end{table}
\begin{proof}
Suppose that $\frakn$ is a uniform Lie algebra of type
$(p,q,r)$ with $q \le 5.$ Let $G = (V,E)$ be the associated
uniformly colored regular graph as in Definition
\ref{algebra to graph}.  By Proposition \ref{graph
properties}, $(V,E)$ is a regular graph with $q$ vertices
and $rp$ edges.  Such graph are classified; a list of all
such graphs is in Table 2.

\begin{table}\label{regular graphs}
\caption{Regular graphs with $q$ vertices and degree $s,$
for $q \le 5$ and $s \ge 1.$ The rightmost columns gives the
number of inequivalent uniform edge colorings and
the number $p$ of colors for each of those colorings.}
\begin{tabular}{| c|| c | c |c|c|c|}
\hline 
 Case & Degree $s$ & $q$ & Graph & Uniform colorings & $p$  \\
\hline
\hline
1 &1 &  2 &  $K_2$ & 1 & 1 \\
\hline
2 & 1 & 4 & $K_2 + K_2$ & 2 & 1,2 \\
\hline
3 & 2 & 3 &  $C_3$ & 1 & 3 \\
\hline
4 & 2 & 4 & $C_4$ & 2 & 2,4 \\
\hline
5 &2 & 5 & $C_5$ & 1  & 5 \\
\hline
6 & 3 & 4 &  $K_4$ &  2 & 3,6 \\
\hline
7 & 4 & 5 & $K_5$  & 2 & 5, 10 \\
\hline 
\end{tabular}
\end{table}

None of the regular graphs in the table have the same values of $p$
and $q,$ unless $(p,q) = (2,4).$ By Corollary \ref{invariants}, $p$
and $q$ are algebraic invariants, so we only need to show that any two
algebras in Table \ref{list of algebras} arising from the same graph
in Table 2 and having the same value of $p$ are
nonisomorphic, and that any two graphs with $(p,q) = (2,4)$ are
nonisomorphic.

We do a case by case analysis of the graphs in Table 2.
 We know from Proposition
\ref{constraints} that $2rp=sq, s \le p,$ and $p$ divides $|E|.$ The
first thing we do in each case is to use these three
constraints to determine a set of feasible values for $p.$ We then 
find  possible uniform edge colorings and the Lie algebras
associated to those colorings.  

\begin{case}  If $G = K_2,$ then $s=1$ and $q=2.$ The only
possible value for $p$ is one. By Proposition \ref{p = 1},
the corresponding uniform Lie algebra is three-dimensional
Heisenberg algebra $\frakh_3.$
\end{case}

\begin{case}  Suppose that $G = K_2 + K_2,$ so $s=1$ and $q=4.$
Then $p = 1$ or $p=2.$ If $p=1,$ then by Proposition \ref{p = 1}, the
corresponding uniform Lie algebra is five-dimensional Heisenberg
algebra $\frakh_5.$ If $p=2,$ then $r=1$ and we have the algebra
with $[v_1,v_2] = \pm z_1$ and $[v_3,v_4] = \pm z_2.$ By
Proposition \ref{r = 1}, all such algebras are isomorphic to
$\frakh_3 \oplus \frakh_3.$ 
\end{case}

\begin{case}
 If $G = C_3,$ then $s=2$ and $q=3,$ then $p=3$ and $r=1.$ The
underlying graph is complete and $p = (\begin{smallmatrix} q \\
2 \end{smallmatrix}).$ By Proposition \ref{r = 1}, all orientations
of the graph define isomorphic Lie algebras, and $\frakn$ is
isomorphic to $\frakf_{3,2}.$
\end{case}

\begin{case}  When $G = C_4,$ $s=2$ and $q=4.$ It follows that
$p=2$ or $p=4.$ If $p=2,$ Proposition \ref{p = 2} tells us that we have
either the 
six-dimensional Lie algebra in Example \ref{Damek Ricci}, or 
$\frakh_3 \oplus \frakh_3$ as in Example \ref{r not invariant}.  
These are not isomorphic, because the  Lie algebra in Example
\ref{Damek Ricci} is irreducible.    If
$p=4,$ then $r = 1,$ and by Proposition \ref{r = 1}, $\frakn$ is
isomorphic to the eight-dimensional cyclic Lie algebra $\frakm(4)$
as in Examples \ref{cyclic algebra} and \ref{cycle graph}.
\end{case}

\begin{case}  If $G = C_5,$ then $s=2$ and $q=5,$ then $p=5.$ By
Proposition \ref{r = 1}, all orientations of the graph yield
isomorphic Lie algebras.  Hence $\frakn$ is isomorphic to the
cyclic Lie algebra $\frakm(5)$ defined in Example \ref{cyclic
algebra}.
\end{case}

\begin{case}  If $G = K_4,$ then $s=3$ and $q=4.$ Either $p =3$ or
$p=6.$ If $p=3,$ then each color occurs $r=2$ times, and the coloring
defines a one-factorization of $K_4.$ Up to equivalence, there is a
unique one-factorization of $K_4.$ It yields uniform Lie algebras with
\begin{gather}\label{4,3}
 [v_1, v_2] = \pm z_1,  [v_3, v_4] =  \pm z_1, 
 [v_1, v_3]= \pm z_2, [v_2, v_4] = \pm z_2, \\
\notag [v_1, v_4]= \pm z_3,  [v_2, v_3] = \pm z_3.\end{gather}
as in the nonisomorphic Lie algebras from
 Examples \ref{quaternionic} and \ref{quaternion associate}.

We claim that any Lie algebra defined as above is isomorphic to either
the Lie algebra in Example \ref{quaternionic} or the Lie algebra in
Example \ref{quaternion associate}.  We use the method described in
Remark \ref{Y hat} to reduce the problem to considering four different
sign choices which are encoded in the sextuples
\begin{alignat*}{2}
 \bfs_1 &= (+,+,+,+,+,+), \qquad  
 \bfs_2 &= (+,+,+,+,+,-),  \\
 \bfs_3 &= (+,+,+,-,+,+),   \qquad
 \bfs_4 &= (+,+,+,-,+,-), 
\end{alignat*}
where the order of signs matched with the order of the brackets in
Equation \eqref{4,3}.  The signs in $\bfs_3$ define the Heisenberg
type Lie algebra in Example \ref{quaternionic}, while the sign in
$\bfs_2$ give the Lie algebra in Example \ref{quaternion
associate}.

The Lie algebras with signs as in $\bfs_1$ and $\bfs_4$ are
both isomorphic to the Lie algebra in Example \ref{quaternion
associate}, which has signs as in $\bfs_2.$ The change of basis 
\[ x_1 = v_1,  x_2 = v_3, x_3 = v_2, x_4 = v_4, w_1 = z_2, w_2 = z_1,
w_3 = z_3\]
converts from the basis $\{ v_i\}_{i=1}^4 \cup \{z_j\}_{j=1}^3$ with
signs as in $\bfs_1$ to a new basis $\{x_i\}_{i=1}^4 \cup
\{w_j\}_{j=1}^3$ with signs as in $\bfs_2.$ The change of basis 
\[ x_1 = v_2,  x_2 =- v_1, x_3 = v_3, x_4 = v_4, w_1 = z_1, w_2 = z_3,
w_3 = z_2\]
takes the basis $\{ v_i\}_{i=1}^4 \cup \{z_j\}_{j=1}^3$ with signs
as in $\bfs_1$ to the basis  $\{x_i\}_{i=1}^4 \cup
\{w_j\}_{j=1}^3$ with signs as in $\bfs_4.$ Thus, if $p=3,$ then
$\frakn$ is isomorphic to either the Lie algebra from 
Example \ref{quaternionic} or the Lie algebra in
 \ref{quaternion associate}.

If $p=6,$ then each edge of the graph is a different color.  By
Proposition \ref{r = 1}, $\frakn$ is isomorphic to $\frakf_{4,2}.$
\end{case}

\begin{case}  If $G = K_5,$ then $s=4$ and $q=5,$ so that
$p = 5$ or $p = 10.$ If $p = 5,$ then the coloring is a
near-one-factorization of $K_5.$ Up to equivalence $K_5$ has
a unique near-one-factorization.  Hence the corresponding
undirected colored graph is the same as the one described
in Example \ref{K5 near-one-factorization}.  By Lemma
\ref{all isomorphic K5}, all possible sign choices yield
isomorphic Lie algebras. If $p = 10,$ then $p =
(\begin{smallmatrix} q \\ 2 \end{smallmatrix}),$ so each
edge of the graph is a different color.  By Proposition \ref{r
= 1}, $\frakn$ is isomorphic to $\frakf_{5,2}.$
\end{case}  \vskip -.2in
 \end{proof}

\bibliographystyle{acm}


\begin{thebibliography}{10}

\bibitem{btv}
{\sc Berndt, J., Tricerri, F., and Vanhecke, L.}
\newblock {\em Generalized {H}eisenberg groups and {D}amek-{R}icci harmonic
  spaces}, vol.~1598 of {\em Lecture Notes in Math.}
\newblock Springer-Verlag, 1995.

\bibitem{cairns-et-al-13}
{\sc Cairns, G., Hini{\'c}~Gali{\'c}, A., and Nikolayevsky, Y.}
\newblock Totally geodesic subalgebras of nilpotent {L}ie algebras.
\newblock {\em J. Lie Theory 23}, 4 (2013), 1023--1049.

\bibitem{cameron-onefact}
{\sc Cameron, P.}
\newblock Resolutions of the pair design, or 1-factorisations of complete
  graphs.
\newblock \url{http://designtheory.org/library/encyc/topics/onefact.pdf}.
\newblock Accessed September 13, 2015.

\bibitem{cameron-75}
{\sc Cameron, P.~J.}
\newblock Minimal edge-colourings of complete graphs.
\newblock {\em J. London Math. Soc. (2) 11}, 3 (1975), 337--346.

\bibitem{crandall-dodziuk}
{\sc Crandall, G., and Dodziuk, J.}
\newblock Integral structures on {$\bf H$}-type {L}ie algebras.
\newblock {\em J. Lie Theory 12}, 1 (2002), 69--79.

\bibitem{dani-mainkar-05}
{\sc Dani, S.~G., and Mainkar, M.~G.}
\newblock Anosov automorphisms on compact nilmanifolds associated with graphs.
\newblock {\em Trans. Amer. Math. Soc. 357}, 6 (2005), 2235--2251.

\bibitem{deloff79}
{\sc Deloff, E.}
\newblock Naturally reductive metrics and metrics with volume preserving
  geodesic symmetries.
\newblock Thesis, Rutgers University, 1979.

\bibitem{eberleinnilgeom1}
{\sc Eberlein, P.}
\newblock Geometry of $2$-step nilpotent groups with a left invariant metric.
\newblock {\em Ann. Sci. \'Ecole Norm. Sup. (4) 27}, 5 (1994), 611--660.

\bibitem{eberlein-representation}
{\sc {Eberlein}, P.}
\newblock {2-step nilpotent Lie groups arising from semisimple modules}.
\newblock {\em ArXiv e-prints\/} (June 2008).

\bibitem{eberlein-heber}
{\sc Eberlein, P., and Heber, J.}
\newblock Quarter pinched homogeneous spaces of negative curvature.
\newblock {\em Internat. J. Math. 7}, 4 (1996), 441--500.

\bibitem{furutani-et-al}
{\sc Furutani, K., Markina, I., and Vasil'ev, A.}
\newblock Free nilpotent and {$H$}-type {L}ie algebras. {C}ombinatorial and
  orthogonal designs.
\newblock {\em J. Pure Appl. Algebra 219}, 12 (2015), 5467--5492.

\bibitem{jablonski-moduli}
{\sc Jablonski, M.}
\newblock Moduli of {E}instein and non-{E}instein nilradicals.
\newblock {\em Geom. Dedicata 152\/} (2011), 63--84.

\bibitem{lafuente-13}
{\sc Lafuente, R.~A.}
\newblock Solvsolitons associated with graphs.
\newblock {\em Adv. Geom. 13}, 2 (2013), 255--275.

\bibitem{lauret-99}
{\sc Lauret, J.}
\newblock Homogeneous nilmanifolds attached to representations of compact {L}ie
  groups.
\newblock {\em Manuscripta Math. 99}, 3 (1999), 287--309.

\bibitem{lauret-oscari}
{\sc Lauret, J., and Oscari, D.}
\newblock On non-singular 2-step nilpotent {L}ie algebras.
\newblock {\em Math. Res. Lett. 21}, 3 (2014), 553--583.

\bibitem{lauretwill07}
{\sc Lauret, J., and Will, C.}
\newblock Einstein solvmanifolds: existence and non-existence questions, 2011.

\bibitem{leukert-98}
{\sc Leukert, S.}
\newblock Representations and nonpositively curved solvmanifolds.
\newblock Thesis, The University of North Carolina at Chapel Hill, 1998.

\bibitem{lovasz-plummer}
{\sc Lov{\'a}sz, L., and Plummer, M.~D.}
\newblock {\em Matching theory}.
\newblock AMS Chelsea Publishing, Providence, RI, 2009.
\newblock Corrected reprint of the 1986 original [MR0859549].

\bibitem{mainkar-15}
{\sc Mainkar, M.~G.}
\newblock Graphs and two-step nilpotent {L}ie algebras.
\newblock {\em Groups Geom. Dyn. 9}, 1 (2015), 55--65.

\bibitem{nikolayevsky-preEinstein}
{\sc Nikolayevsky, Y.}
\newblock Einstein solvmanifolds and the pre-{E}instein derivation.
\newblock {\em Trans. Amer. Math. Soc. 363}, 8 (2011), 3935--3958.

\bibitem{nikolayevsky-2step}
{\sc Nikolayevsky, Y.}
\newblock Einstein solvmanifolds attached to two-step nilradicals.
\newblock {\em Math. Z. 272}, 1-2 (2012), 675--695.

\bibitem{oscari-14}
{\sc Oscari, D.}
\newblock On the existence of nilsolitons on 2-step nilpotent {L}ie groups.
\newblock {\em Adv. Geom. 14}, 3 (2014), 483--497.

\bibitem{payne-anosov}
{\sc Payne, T.~L.}
\newblock Anosov automorphisms of nilpotent {L}ie algebras.
\newblock {\em J. Mod. Dyn. 3}, 1 (2009), 121--158.

\bibitem{payne-agag}
{\sc Payne, T.~L.}
\newblock Geometric invariants for nilpotent metric {L}ie algebras with
  applications to moduli spaces of nilsoliton metrics.
\newblock {\em Ann. Global Anal. Geom. 41}, 2 (2012), 139--160.

\bibitem{payne-methods}
{\sc Payne, T.~L.}
\newblock Methods for parametrizing varieties of {L}ie algebras.
\newblock {\em Journal of Algebra 445\/} (2016), 1 -- 34.

\bibitem{plummer-07}
{\sc Plummer, M.~D.}
\newblock Graph factors and factorization: 1985--2003: a survey.
\newblock {\em Discrete Math. 307}, 7-8 (2007), 791--821.

\bibitem{ray-16}
{\sc Ray, A.}
\newblock Two-step and three-step nilpotent {L}ie algebras constructed from
  {S}chreier graphs.
\newblock {\em J. Lie Theory 26}, 2 (2016), 479--495.

\bibitem{scheuneman-67}
{\sc Scheuneman, J.}
\newblock Two-step nilpotent {L}ie algebras.
\newblock {\em J. Algebra 7\/} (1967), 152--159.

\bibitem{schroeder-thesis}
{\sc Schroeder, M.}
\newblock Metric nilpotent {L}ie algebras defined by graphs.
\newblock Master's thesis, Idaho State University, 2015.

\bibitem{wallis-97}
{\sc Wallis, W.~D.}
\newblock {\em One-factorizations}, vol.~390 of {\em Mathematics and its
  Applications}.
\newblock Kluwer Academic Publishers Group, Dordrecht, 1997.

\bibitem{wallis-07}
{\sc Wallis, W.~D.}
\newblock {\em Introduction to combinatorial designs}, second~ed.
\newblock Discrete Mathematics and its Applications (Boca Raton). Chapman \&
  Hall/CRC, Boca Raton, FL, 2007.

\bibitem{wang-symmetry-1}
{\sc Wang, M.~Y.}
\newblock Einstein metrics from symmetry and bundle constructions.
\newblock In {\em Surveys in differential geometry: essays on {E}instein
  manifolds}, Surv. Differ. Geom., VI. Int. Press, Boston, MA, 1999,
  pp.~287--325.

\bibitem{wang-symmetry-2}
{\sc Wang, M. Y.-K.}
\newblock Einstein metrics from symmetry and bundle constructions: a sequel.
\newblock In {\em Differential geometry}, vol.~22 of {\em Adv. Lect. Math.
  (ALM)}. Int. Press, Somerville, MA, 2012, pp.~253--309.

\end{thebibliography}


\end{document}